\theoremstyle{plain}
\theoremstyle{plain}
\newtheorem{theorem}{Theorem}
\newtheorem{lemma}[theorem]{Lemma}
\newtheorem{question}[theorem]{Question}
\theoremstyle{definition}
\newtheorem{definition}[theorem]{Definition}
\newtheorem*{remark}{Remark}
\newtheorem{example}[theorem]{Example}
\newcommand{\ZZ}{\ensuremath{\mathbb{Z}}}
\newcommand{\XX}{\ensuremath{\mathbb{X}}}
\newcommand{\FF}{\ensuremath{\mathbb{F}}}
\DeclareMathOperator{\Id}{Id}
\newcommand{\OO}{\ensuremath{\mathbb{O}}}
\newcommand{\C}{\ensuremath{\mathcal{C}}}
\newcommand{\D}{\mathcal{D}}
\newcommand{\g}{\mathfrak{g}}
\DeclareMathOperator{\AKh}{AKh}
\newcommand{\CKh}{\ensuremath{\mbox{CKh}}}
\newcommand{\sltwo}{\ensuremath{\mathfrak{sl}_2}}
\newcommand{\slk}[1]{\ensuremath{\mathfrak{sl}_{#1}}}
\newcommand{\exsltwo}{\sltwo(\wedge)}
\newcommand{\RIa}{\llbracket\includegraphics[height=.1in, width=0.15in]{./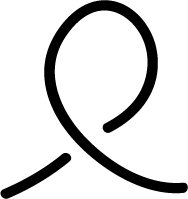}\rrbracket}
\newcommand{\llbracket \resizebox{.17in}{.12in}{

\tikzset{every picture/.style={line width=0.75pt}} 

\begin{tikzpicture}[x=0.75pt,y=0.75pt,yscale=-1,xscale=1]

\draw [color={rgb, 255:red, 0; green, 0; blue, 0 }  ,draw opacity=1 ][line width=6]    (118.92,131.02) .. controls (225.92,54.02) and (120.92,-22.98) .. (79.92,22.02) .. controls (38.92,67.02) and (59.92,117.02) .. (87.92,142.02) .. controls (115.92,167.02) and (167.92,198.02) .. (198,190) ;
\draw [color={rgb, 255:red, 0; green, 0; blue, 0 }  ,draw opacity=1 ][line width=6]    (8.92,196.02) .. controls (34.92,187.02) and (46.92,178.02) .. (74.92,159.02) ;

\end{tikzpicture}
}\rrbracket}{\llbracket \resizebox{.17in}{.12in}{

\tikzset{every picture/.style={line width=0.75pt}} 

\begin{tikzpicture}[x=0.75pt,y=0.75pt,yscale=-1,xscale=1]

\draw [color={rgb, 255:red, 0; green, 0; blue, 0 }  ,draw opacity=1 ][line width=6]    (118.92,131.02) .. controls (225.92,54.02) and (120.92,-22.98) .. (79.92,22.02) .. controls (38.92,67.02) and (59.92,117.02) .. (87.92,142.02) .. controls (115.92,167.02) and (167.92,198.02) .. (198,190) ;
\draw [color={rgb, 255:red, 0; green, 0; blue, 0 }  ,draw opacity=1 ][line width=6]    (8.92,196.02) .. controls (34.92,187.02) and (46.92,178.02) .. (74.92,159.02) ;

\end{tikzpicture}
}\rrbracket}
\newcommand{\RIb}{\llbracket\includegraphics[height=.1in, width=0.15in]{./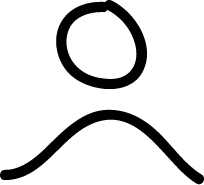}\rrbracket}
\newcommand{\RIc}{\llbracket\includegraphics[height=.1in, width=0.15in]{./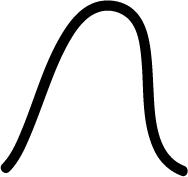}\rrbracket}
\newcommand{\RIIa}{\llbracket\includegraphics[height=.1in, width=0.25in]{./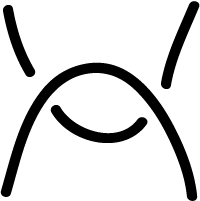}\rrbracket}
\newcommand{\RIIb}{\llbracket\includegraphics[width=0.25in]{./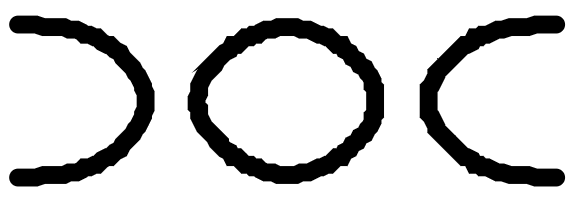}\rrbracket}
\newcommand{\RIIc}{\llbracket\includegraphics[width=0.25in]{./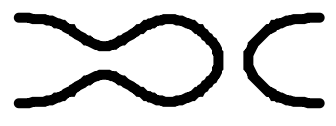}\rrbracket}
\newcommand{\RIId}{\llbracket\includegraphics[width=0.25in]{./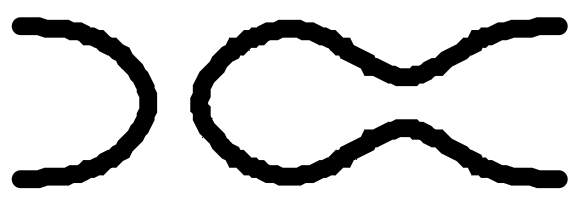}\rrbracket}
\newcommand{\RIIe}{\llbracket\includegraphics[width=0.25in]{./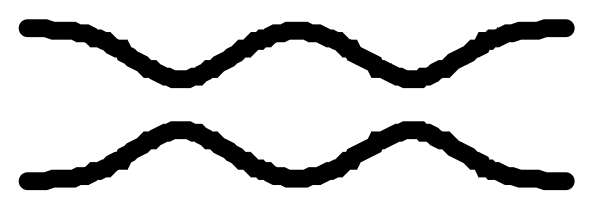}\rrbracket}
\newcommand{\llbracket \raisebox{-.2em}{\resizebox{.15in}{!}{

\tikzset{every picture/.style={line width=0.75pt}} 

\begin{tikzpicture}[x=0.75pt,y=0.75pt,yscale=-1,xscale=1]

\draw [line width=6]    (55,140) .. controls (77.14,154.93) and (81.64,157.43) .. (109.28,168.84) ;
\draw [line width=6]    (214,166) .. controls (239.64,158.93) and (244.28,156.84) .. (267.28,140.84) ;
\draw [line width=6]    (141.64,177.79) .. controls (165.79,178.58) and (161.64,177.79) .. (179.64,177.29) ;
\draw [line width=6]    (117,34) .. controls (117.28,77.84) and (125.28,86.84) .. (144.28,104.84) ;
\draw [line width=6]    (203.28,32.84) .. controls (202.28,115.48) and (117.28,113.48) .. (118.28,246.84) ;
\draw [line width=6]    (177,136) .. controls (210.28,156.84) and (203.28,228.48) .. (203.28,248.48) ;

\end{tikzpicture}
}}\rrbracket}{\llbracket \raisebox{-.2em}{\resizebox{.15in}{!}{

\tikzset{every picture/.style={line width=0.75pt}} 

\begin{tikzpicture}[x=0.75pt,y=0.75pt,yscale=-1,xscale=1]

\draw [line width=6]    (55,140) .. controls (77.14,154.93) and (81.64,157.43) .. (109.28,168.84) ;
\draw [line width=6]    (214,166) .. controls (239.64,158.93) and (244.28,156.84) .. (267.28,140.84) ;
\draw [line width=6]    (141.64,177.79) .. controls (165.79,178.58) and (161.64,177.79) .. (179.64,177.29) ;
\draw [line width=6]    (117,34) .. controls (117.28,77.84) and (125.28,86.84) .. (144.28,104.84) ;
\draw [line width=6]    (203.28,32.84) .. controls (202.28,115.48) and (117.28,113.48) .. (118.28,246.84) ;
\draw [line width=6]    (177,136) .. controls (210.28,156.84) and (203.28,228.48) .. (203.28,248.48) ;

\end{tikzpicture}
}}\rrbracket}
\newcommand{\llbracket \raisebox{-.2em}{\resizebox{.15in}{!}{

\tikzset{every picture/.style={line width=0.75pt}} 

\begin{tikzpicture}[x=0.75pt,y=0.75pt,yscale=-1,xscale=1]

\draw [color={rgb, 255:red, 0; green, 0; blue, 0 }  ,draw opacity=1 ][line width=6]    (55,140) .. controls (75.28,124.02) and (86.28,119.02) .. (110.28,112.02) ;
\draw [color={rgb, 255:red, 0; green, 0; blue, 0 }  ,draw opacity=1 ][line width=6]    (213.28,114.02) .. controls (237.28,121.02) and (245.28,127.02) .. (267.28,140.84) ;
\draw [color={rgb, 255:red, 0; green, 0; blue, 0 }  ,draw opacity=1 ][line width=6]    (141.64,102.79) .. controls (152.64,102.11) and (160.64,102.11) .. (175.28,102.02) ;
\draw [color={rgb, 255:red, 0; green, 0; blue, 0 }  ,draw opacity=1 ][line width=6]    (117,34) .. controls (116.28,92.02) and (121.28,120.02) .. (144.28,145.02) ;
\draw [color={rgb, 255:red, 0; green, 0; blue, 0 }  ,draw opacity=1 ][line width=6]    (203.28,32.84) .. controls (202.28,170.02) and (117.28,163.02) .. (118.28,246.84) ;
\draw [color={rgb, 255:red, 0; green, 0; blue, 0 }  ,draw opacity=1 ][line width=6]    (177.28,176.02) .. controls (205.28,202.02) and (203.28,228.48) .. (203.28,248.48) ;

\end{tikzpicture}
}}\rrbracket}{\llbracket \raisebox{-.2em}{\resizebox{.15in}{!}{

\tikzset{every picture/.style={line width=0.75pt}} 

\begin{tikzpicture}[x=0.75pt,y=0.75pt,yscale=-1,xscale=1]

\draw [color={rgb, 255:red, 0; green, 0; blue, 0 }  ,draw opacity=1 ][line width=6]    (55,140) .. controls (75.28,124.02) and (86.28,119.02) .. (110.28,112.02) ;
\draw [color={rgb, 255:red, 0; green, 0; blue, 0 }  ,draw opacity=1 ][line width=6]    (213.28,114.02) .. controls (237.28,121.02) and (245.28,127.02) .. (267.28,140.84) ;
\draw [color={rgb, 255:red, 0; green, 0; blue, 0 }  ,draw opacity=1 ][line width=6]    (141.64,102.79) .. controls (152.64,102.11) and (160.64,102.11) .. (175.28,102.02) ;
\draw [color={rgb, 255:red, 0; green, 0; blue, 0 }  ,draw opacity=1 ][line width=6]    (117,34) .. controls (116.28,92.02) and (121.28,120.02) .. (144.28,145.02) ;
\draw [color={rgb, 255:red, 0; green, 0; blue, 0 }  ,draw opacity=1 ][line width=6]    (203.28,32.84) .. controls (202.28,170.02) and (117.28,163.02) .. (118.28,246.84) ;
\draw [color={rgb, 255:red, 0; green, 0; blue, 0 }  ,draw opacity=1 ][line width=6]    (177.28,176.02) .. controls (205.28,202.02) and (203.28,228.48) .. (203.28,248.48) ;

\end{tikzpicture}
}}\rrbracket}
\newcommand{\RIIILooo}{\includegraphics[height=0.25in]{./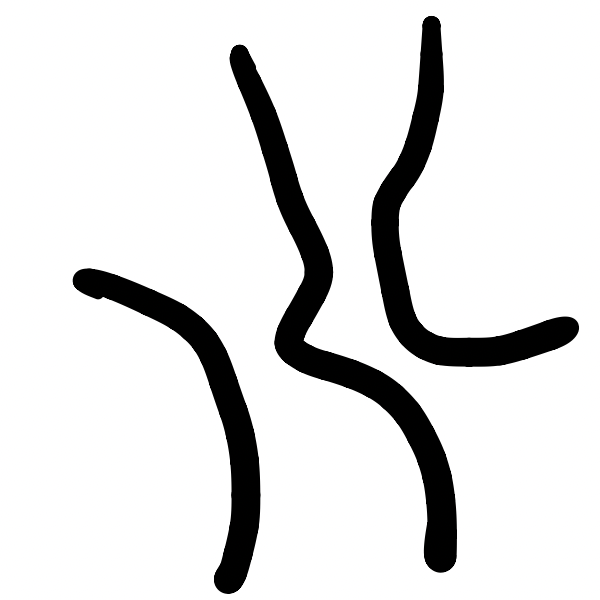}}
\newcommand{\RIIILooi}{\includegraphics[height=0.25in]{./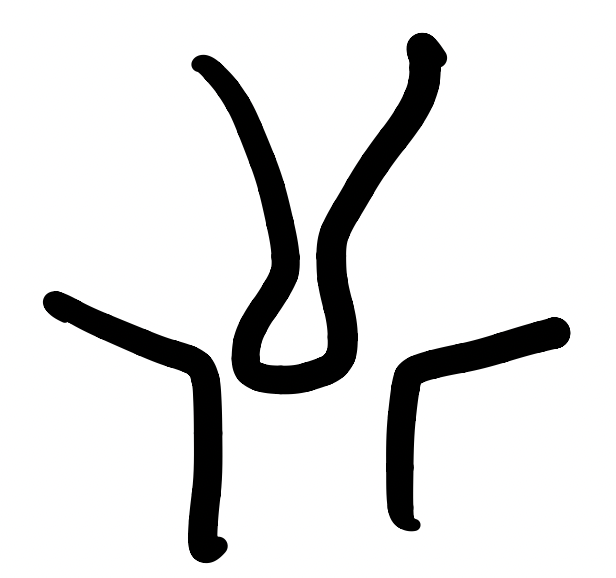}}
\newcommand{\RIIILoio}{\includegraphics[height=0.25in]{./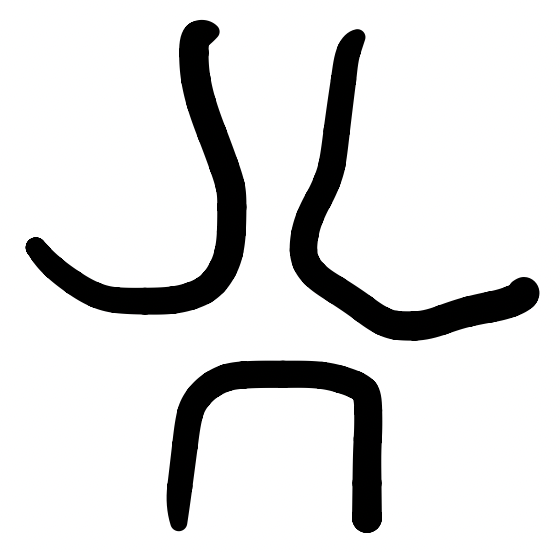}}
\newcommand{\RIIILoii}{\includegraphics[height=0.25in]{./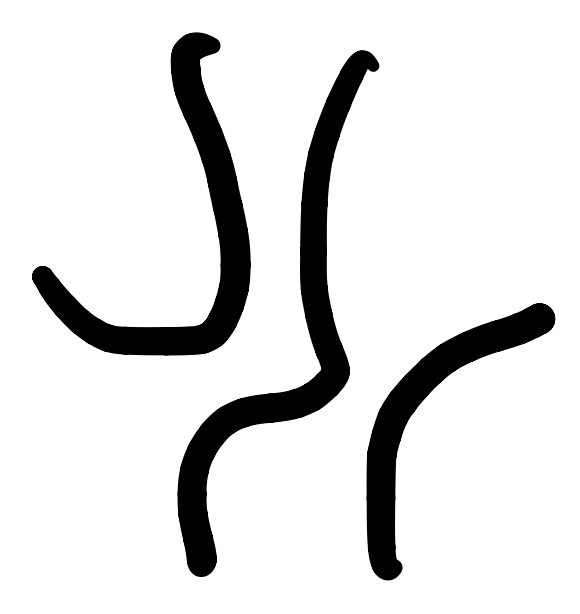}}
\newcommand{\RIIILioo}{\includegraphics[height=0.25in]{./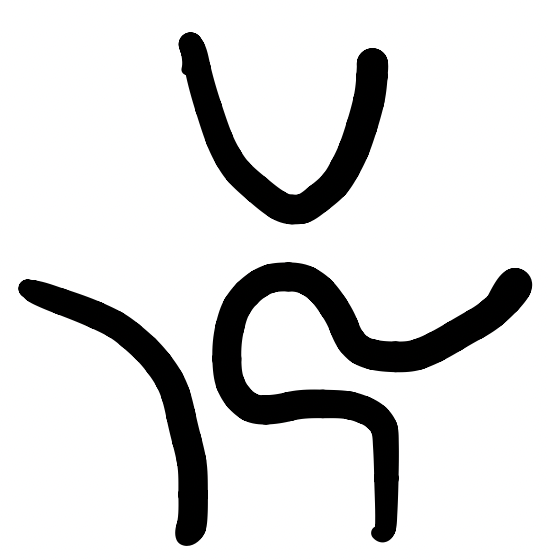}}
\newcommand{\RIIILioi}{\includegraphics[height=0.25in]{./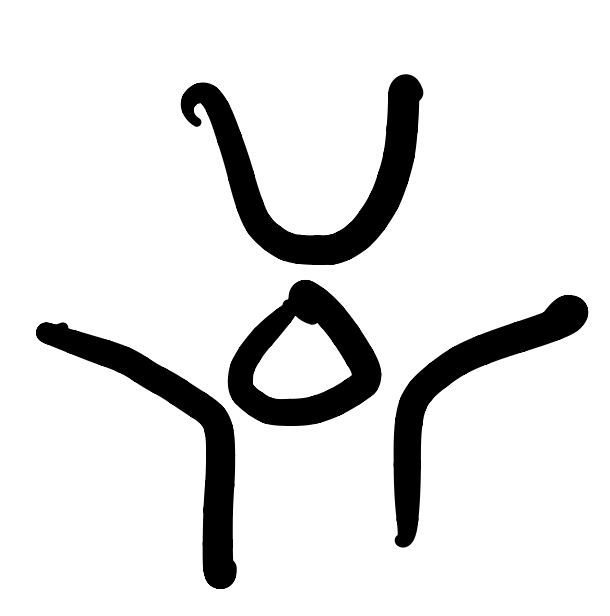}}
\newcommand{\RIIILiio}{\includegraphics[height=0.25in]{./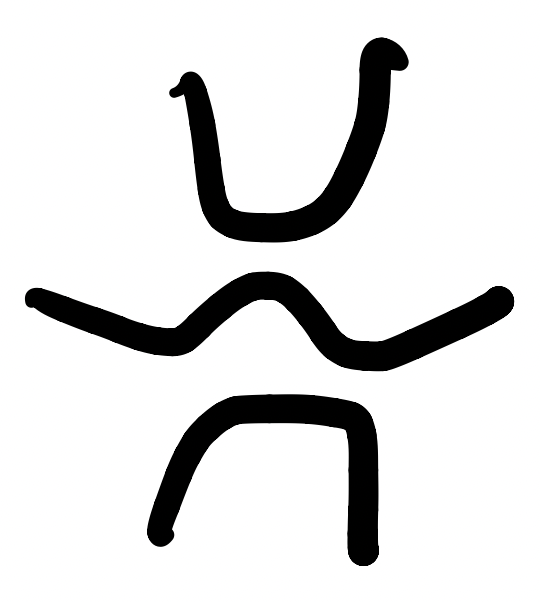}}
\newcommand{\RIIILiii}{\includegraphics[height=0.25in]{./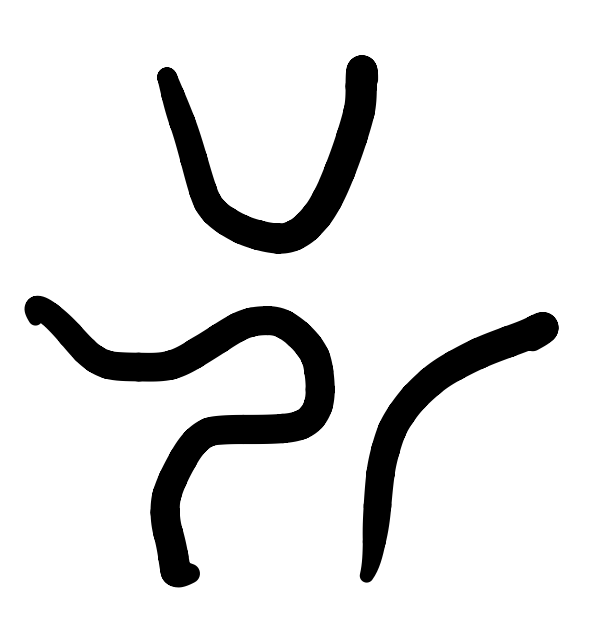}}
\newcommand{\RIIIRooo}{\includegraphics[height=0.25in]{./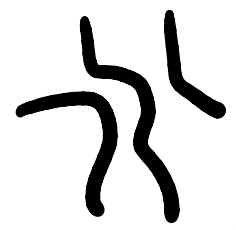}}
\newcommand{\RIIIRooi}{\includegraphics[height=0.25in]{./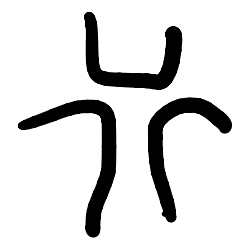}}
\newcommand{\RIIIRoio}{\includegraphics[height=0.25in]{./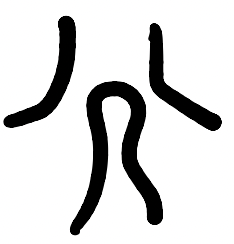}}
\newcommand{\RIIIRoii}{\includegraphics[height=0.25in]{./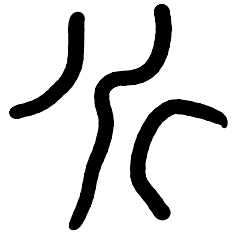}}
\newcommand{\RIIIRioo}{\includegraphics[height=0.25in]{./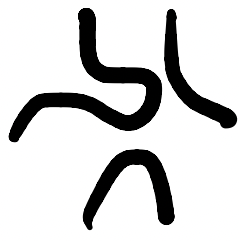}}
\newcommand{\RIIIRioi}{\includegraphics[height=0.25in]{./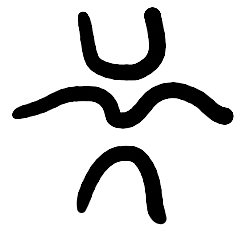}}
\newcommand{\RIIIRiio}{\includegraphics[height=0.25in]{./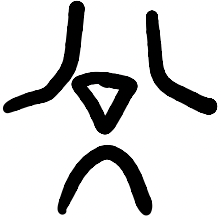}}
\newcommand{\RIIIRiii}{\includegraphics[height=0.25in]{./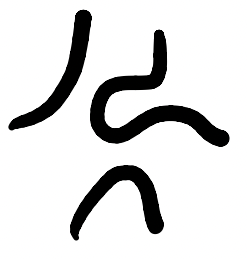}}
\newcommand{\Ciii}{(\C/\C')/\C''}
\newcommand{\Diii}{(\D/\D')/\D''}
\newcommand{\RIIIcase}[6]{
		\begin{tikzpicture}
	 	\node at (0,0) (n1) {#1};
	 	\node at (2,0) (n2) {#2};
	 	\node at (4.5,0) (n3) {$\displaystyle{#3/_{w_+=0}}$};
	 	\node at (7,0) (n4) {#4};
	 	\node at (10,2) (n5) {$\displaystyle{#5/_{w_+=0}}$};
	 	\node at (10,0) (n7) {$+$};
	 	\node at (10,-2) (n6) {#6};
	 	\scriptsize{
  		\node at (0,-1) (blabel) {000};
  		\node at (2,-1) (blabel) {001};
  		\node at (4.5,-1) (blabel) {101};
  		\node at (7,-1) (blabel) {100};
  		\node at (10,1) (blabel) {101};
  		\node at (10,-3) (blabel) {110};
	 	\draw [->] (n1) -- (n2) node[midway,above] {$\partial_0^{Lee}$};
	 	\draw [->] (n1) -- (n2) node[midway,below] {$t_1$};
	 	\draw [->] (n2) -- (n3) node[midway,above] {$\partial_0$};
	 	\draw [->] (n2) -- (n3) node[midway,below] {$\Delta$};
	 	\draw [->] (n3) -- (n4) node[midway,above] {$\Delta^{-1}$};
	 	\draw [->] (n4) -- (n5) node[midway,above left] {$\Delta$};
	 	\draw [->] (n4) -- (n6) node[midway,below left] {$t_2$};
	 	}
	\end{tikzpicture}
	}
\DeclareMathOperator{\even}{even}
\DeclareMathOperator{\odd}{odd}
\DeclareMathOperator{\sgn}{sgn}
\DeclareMathOperator{\Mod}{-mod}
\author{Champ Davis}
\address{University of Oregon; Department of Mathematics; 108C University Hall; Eugene, OR 97403}
\email{champd@uoregon.edu}
\title{An $L_\infty$-module structure on annular Khovanov homology}
\begin{document}
\bibliographystyle{plain}

\begin{abstract}
Let $L$ be a link in a thickened annulus.  In \cite{GLW17}, Grigsby-Licata-Wehrli showed that the annular Khovanov homology of $L$ is equipped with an action of $\exsltwo$, the exterior current algebra of the Lie algebra $\sltwo$.  In this paper, we upgrade this result to the setting of $L_\infty$-algebras and modules.  That is, we show that $\exsltwo$ is an $L_\infty$-algebra and that the annular Khovanov homology of $L$ is an $L_\infty$-module over $\exsltwo$.  Up to $L_\infty$-quasi-isomorphism, this structure is invariant under Reidemeister moves.  Finally, we include explicit formulas to compute the higher $L_\infty$-operations.
\end{abstract}

\maketitle

\section{Introduction}\label{sec:Intro}

In \cite{K99}, Khovanov defined a bigraded homology group for oriented links in $S^3$ which is a categorification of the Jones polynomial.  Following this, for a compact, oriented surface $\Sigma$, Asaeda, Przytycki, and Sikora introduced a generalization of Khovanov homology for links in $\Sigma \times [0,1]$ that categorifies the Kauffman skein module of $\Sigma$; see \cite{APS04}.  The case where $\Sigma$ is an annulus is known as annular Khovanov homology and has since garnered much attention.  For example, there have been various detection results that have been obtained by exploiting the relationship of annular Khovanov homology with various Floer theories.  In \cite{XY19}, Xie-Zhang use instanton Floer homology to show that annular Khovanov homology detects both the unlink and the closure of the trivial braid.  They also show that it distinguishes braid closures from other links.  More recently, Binns-Martin showed that knot Floer homology detects various torus links, and they used this to show that annular Khovanov homology detects certain braid closures; see \cite{BM20}.

A key feature of annular Khovanov homology is that it is endowed with extra structure not present in ordinary Khovanov homology.  In \cite{GLW17}, Grigsby-Licata-Wehrli show that the annular Khovanov homology of a link is both an $\sltwo$-representation and an $\exsltwo$-representation, where $\exsltwo$ is a $\ZZ$-graded Lie superalgebra related to $\sltwo$.  This structure has been studied in several contexts.  In one direction, Quefflec-Rose generalized this to show that annular Khovanov-Rozansky homology carries an $\mathfrak{sl}_n$-action; see \cite{QR18}.  In another direction, Akhmechet-Krushkal-Willis have made progress towards lifting the $\mathfrak{sl}_2$-action to the stable homotopy refinement of the annular Khovanov homology; see \cite{AKW22}.

In proving that there is an $\sltwo$-representation structure on the annular Khovanov complex $\CKh(L)$, Grigsby-Licata-Wehrli showed that the boundary maps of $\CKh(L)$ commute with the $\sltwo$-action, which shows that the $\sltwo$-action holds at the chain level.  In contrast, the $\exsltwo$-action is well-defined on the annular Khovanov homology $\AKh(L)$, but at the chain level, it only holds up to homotopy.  This observation suggests the existence of an $L_\infty$-module structure on $\AKh(L)$.  In this paper, we exhibit $\exsltwo$ as an $L_\infty$-algebra and upgrade the $\exsltwo$-representation structure to that of an $L_\infty$-module.  This module structure is an invariant of the annular link at both the chain level and on homology.  In particular, we will prove the following theorem.

\begin{theorem}
\label{thm:main}
	Let $L \subset A\times I$ be an annular link.  There is an $L_\infty$-module structure on both $\CKh(L; \ZZ/2\ZZ)$ and $\AKh(L; \ZZ/2\ZZ)$ over the $L_\infty$-algebra $\exsltwo$.  Up to $L_\infty$-quasi-isomorphism, this module structure only depends on the isotopy class of $L$ in $A\times I$.
\end{theorem}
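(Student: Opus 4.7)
The plan breaks into three phases: equipping $\exsltwo$ with an $L_\infty$-algebra structure; producing the module operations on $\CKh(L;\ZZ/2\ZZ)$; and checking invariance under Reidemeister moves at the level of $L_\infty$-quasi-isomorphism, so that the structure descends to $\AKh(L;\ZZ/2\ZZ)$ as claimed.

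For the first step, since $\exsltwo$ is a $\ZZ$-graded Lie superalgebra, the most economical $L_\infty$-structure sets $\ell_2 = [\cdot,\cdot]$ and $\ell_n = 0$ for $n \neq 2$; over $\ZZ/2\ZZ$ the signed Jacobi identity collapses to the ordinary Jacobi identity, so this is a legitimate $L_\infty$-algebra. If the explicit formulas promised in the abstract require nontrivial higher brackets, I would instead obtain them by homotopy transfer from a chain model on which the annular action is strict, for instance from the annular closure of a standard braid representative.

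For the second step, I would take $\rho_1$ to be the annular Khovanov differential and $\rho_2$ to be the Grigsby-Licata-Wehrli chain-level action. That action strictly commutes with the differential but satisfies the Lie-module compatibility only up to homotopy; this homotopy is $\rho_3$. More generally, the $L_\infty$-module axiom at level $n$ forces the lower-order inconsistency to be a $d$-commutator, and an explicit primitive for that commutator defines $\rho_n$. To produce canonical primitives I would work in Bar-Natan's cobordism formulation, where the higher operations assemble from local cobordism corrections indexed by tuples of crossings in the cube of resolutions; this reduces the verification of the $L_\infty$-module relations to a finite list of local tangle identities.

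For the third step, each Bar-Natan chain homotopy equivalence $f: \CKh(L;\ZZ/2\ZZ) \to \CKh(L';\ZZ/2\ZZ)$ attached to a Reidemeister move becomes the leading term $f_1$ of an $L_\infty$-quasi-isomorphism $\{f_n\}$, with higher components built inductively as in the module construction: at each order one solves the analogous $L_\infty$-morphism equation by choosing a primitive for a known $d$-boundary, and locality confines the choice to a cobordism supported near the move.

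The main obstacle will be coherence. Both the $L_\infty$-module relations and the $L_\infty$-morphism relations are infinite families of quadratic identities whose inductive solvability requires that at every step the prescribed error be simultaneously a $d$-boundary and compatible with all lower-order choices. I expect this to be controlled by the strict locality of the Bar-Natan framework: restricting attention to a neighborhood of a single crossing or Reidemeister region reduces every coherence statement to a finite, checkable identity in the cobordism category, and passing to $\ZZ/2\ZZ$ coefficients collapses the sign bookkeeping that would otherwise be the most delicate part of the argument.
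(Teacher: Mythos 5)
Your overall plan---give $\exsltwo$ the trivial $L_\infty$-structure, make the GLW action into $\rho_2$, and build higher $\rho_n$ and the Reidemeister quasi-isomorphisms $f_n$ inductively by choosing primitives for error terms---is not wrong in outline, but it has a genuine gap exactly where you flag it: the coherence problem. The $L_\infty$-module relation at level $n$ presents you with an expression and demands that it equal $d\rho_n + \rho_n d$. For this equation to have a solution, that expression must indeed be a boundary, i.e.\ an obstruction class in the homology of an appropriate Hom-complex must vanish. You assert this without proof and propose that Bar-Natan locality handles it. But locality controls the \emph{form} of any candidate $\rho_n$, not the solvability of the obstruction equation; the GLW action involves genuinely global data (the winding-number grading, the count of nontrivial circles), and the needed primitives are not supported near a single crossing. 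The same issue recurs for each Reidemeister morphism. As written your argument is a plan to run an obstruction-theoretic induction without verifying that the obstructions vanish.

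The paper avoids this induction entirely, and the key idea you are missing is the auxiliary differential graded Lie superalgebra $\exsltwo_{dg}$ (Grigsby--Licata--Wehrli already define it). Over $\exsltwo_{dg}$ the chain complex $\CKh(L)$ is a \emph{strict} module: $k_2$ is the given action of the generators $v_2, v_{-2}, d, D, e, f, h$ (by $\partial_+^{Lee}, \partial_-, \partial_0, \partial_0^{Lee}$, and the $\sltwo$-matrices), and $k_n = 0$ for $n \geq 3$, so there is nothing to solve. One then uses a cochain contraction $\exsltwo_{dg} \to H(\exsltwo_{dg}) \cong \exsltwo \oplus \ZZ$ to transfer the $L_\infty$-algebra structure (which turns out to have no higher brackets) and, crucially, to obtain an explicit $L_\infty$-algebra morphism $I: H(\exsltwo_{dg}) \to \exsltwo_{dg}$ with nontrivial $I_2$; restriction of scalars along $I$ produces the $\exsltwo$-module structure on $\CKh(L)$ by a closed formula, with $k_3(v_2, v_{-2}, m) = \partial_0^{Lee}(m)$ dropping out automatically rather than being chosen. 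Invariance under Reidemeister moves is then proved for the strict $\exsltwo_{dg}$-module structure (using chain contractions onto the usual quotient complexes, so that the transferred module structures and morphisms have explicit formulas, and checking directly that the relevant higher operations vanish), and restriction of scalars preserves $L_\infty$-quasi-isomorphism. Finally, a chain contraction $\CKh(L) \to \AKh(L)$ transfers the module structure to homology. Replacing your inductive obstruction argument with this two-step transfer (homotopy transfer on the algebra side, restriction of scalars on the module side) is what makes the construction both exist and be canonical.
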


The organization of this paper is as follows.  In section 2, we recall the definitions of $\mathfrak{sl}_2$, $\exsltwo$, and $\exsltwo_{dg}$ and review some key results obtained by Grigsby-Licata-Wehrli.  In section 3, we provide a more detailed background of annular Khovanov homology.  In section 4, we provide relevant background information on $L_\infty$-algebras and modules.  In section 5, we explain how $\exsltwo_{dg}$ and $\exsltwo$ are $L_\infty$-algebras.  In sections 6 and 7, we explain how $\CKh(L)$ and $\AKh(L)$ are $L_\infty$-modules.  In section 8, we prove the invariance of these structures under Reidemeister moves.  In section 9, we provide some examples showing this structure is nontrivial.

\begin{remark}
	The proof of Theorem \ref{thm:main} relies on several results about $L_\infty$-modules.  In particular, the proofs of Theorem \ref{thm:transfer}, Lemma \ref{lem:transfermorphism}, and Theorem \ref{restriction} are given over $\ZZ/2\ZZ$.  We expect these results to hold with signs, but tracking them through their respective proofs is intricate and beyond the scope of this paper.  Outside of these three proofs, we will include signs when appropriate.  Working without signs affects the bracket relations in $\sltwo$, $\exsltwo$, and $\exsltwo_{dg}$; see section \ref{sec:liealgebras}.  The absence of signs also affects the higher operations involved in the $\exsltwo$ $L_\infty$-module structure on $\CKh(L)$; see Theorem \ref{thm:wedgestructure}.

\end{remark}

\subsection{Acknowledgements} The author would like to thank his advisor, Robert Lipshitz, for his guidance and many helpful conversations.

\section{The Lie algebras $\slk{2}, \exsltwo,$ and $\exsltwo_{dg}$}
\label{sec:liealgebras}

In this section, we review the Lie algebras of interest.  We first recall the definition of $\slk{2}$.  Next, we define the Lie superalgebra $\exsltwo$, which will be our main $L_\infty$-algebra of study.  Finally, we define an auxiliary Lie superalgebra, $\exsltwo_{dg}$, which is closely related to $\exsltwo$ and will help us prove several key results.

\subsection{The Lie algebra $\slk{2}$}

To fix notation, we will denote the standard basis for the Lie algebra $\slk{2}$ by $\{e,f,h\}$.  Over $\ZZ$, the Lie bracket relations are given by:
$$
[e,f] = h, \,\, [e,h] = -2e, \,\, [f,h] = 2f.
$$

\subsection{The Lie superalgebra $\exsltwo$}

In \cite{GLW17}, Grigsby-Licata-Wehrli introduce a larger Lie algebra $\exsltwo$ containing $\sltwo$ as a subalgebra.  In fact, $\exsltwo$ has the structure of a $\ZZ$-graded Lie superalgebra.

\begin{definition}
	A \textbf{Lie superalgebra} $\mathfrak{g}$ is a $\ZZ/2\ZZ$-graded vector space $\mathfrak{g}_{\even}\oplus \mathfrak{g}_{\odd}$ equipped with a bilinear map $[\cdot, \cdot]: \g \times \g \to \g$, called the super Lie bracket, satsfying the following conditions:
	\vspace{.5em}

	\begin{tabular}{ll}
		(Super skew-symmetry) &  $[x,y]= -(-1)^{|x||y|}[y,x]$ \\ 
		& \\
		(Super Jacobi identity)& $(-1)^{|x||y|}[x,[y,z]] + (-1)^{|y||x|}[y,[z,x]] + (-1)^{|z||y|}[z,[x,y]]=0$ \\ 
	\end{tabular}
\vspace{.5em}

	Here, $x,y$ and $z$ are homogeneous elements with respect to the $\ZZ/2\ZZ$-grading.  The notation $|x|$ represents the degree of $x$, and the degree of $[x,y]$ is required to be the sum of the degrees of $x$ and $y$, modulo 2.  These conditions should be thought of as analogs of the usual Lie algebra axioms, but with gradings taken into consideration.
\end{definition}

We now describe the exterior current algebra $\exsltwo$ by generators and relations, as presented in \cite{GLW17}.  As vector spaces,
$$
	\exsltwo \cong \sltwo \oplus \sltwo,
$$
where the first summand is in degree 0 and the second in degree 1 with respect to both the $\ZZ$-grading and the $\ZZ/2\ZZ$-grading.  The $\ZZ/2\ZZ$-grading required for the Lie superalgebra structure is the mod 2 reduction of the $\ZZ$-grading.  Denoting the standard basis of the first $\sltwo$ summand by $\{e,f,h\}$ and that of the degree 1 summand by $\{v_{2},v_{-2}, v_0\}$, the bracket relations for the Lie superalgebra $\exsltwo$ are 
\begin{center}
\begin{tabular}[colsep=3em]{l@{\hspace{3em}}l@{\hspace{3em}}l}
	$[e,f] = h$ 					& $[h,e] = 2e$ 						& $[f,v_0] = 2v_{-2}$ \\
	$[e,v_{2}] = 0$ 				& $[h,f] = -2f$ 					& $[f,v_{-2}] = 0$ \\
	$[e,v_0] = -2v_2$ 				& $[h,v_0] = 0$ 					& $[h,v_2] =2v_2$ \\
	$[e,v_{-2}] = v_0 = -[f,v_2]$ 	& $[h,v_{-2}] = -2v_{-2}$ 			& $[v_i,v_j]=0$ for $i,j\in \{2,0,-2\}$. \\
\end{tabular}
\end{center}

\subsection{The Lie superalgebra $\exsltwo_{dg}$}

Following \cite{GLW17}, we describe the $\ZZ$-graded Lie superalgebra $\exsltwo_{dg}$.  As a $\ZZ$-graded super vector space, the degree $0$ generators are $\{e,f,h\}$, and the degree $1$ generators are $\{v_2,v_{-2},d,D\}$.  The defining bracket relations are 
\begin{center}
\vspace{1em}
\begin{tabular}[colsep=3em]{l@{\hspace{3em}}l@{\hspace{3em}}l}
	$[e,f] = h$			&		$[e,v_{-2}] = -[f,v_2]$;		&	$[d,y] = 0$ for all $y\in \{e,f,h,v_2,v_{-2}\}$; \\
	$[h,e] = 2e$;		&		$[f,v_{-2}]= 0$;				&	$[D,y] = 0$ for all $y\in \{e,f,h,v_2,v_{-2}\}$; \\ 
	$[h,f] = -2f$;		&		$[h,v_2]  = 2v_2$;				&	$[d,d] = [D,D] = [v_2,v_2] = [v_{-2},v_{-2}] = 0$. \\
	$[e,v_{2}] = 0$;	&		$[h,v_{-2}] = -2v_{-2}$;		&	$[v_2,v_{-2}] + [d,D] = 0$.	
\end{tabular}
	\vspace{1em}
\end{center}

The structure of $\exsltwo_{dg}$ becomes more clear with the following two lemmas.  The first gives us a basis for $\exsltwo_{dg}$, and the second exhibits $\exsltwo$ as a direct summand of the homology of $\exsltwo_{dg}$ by regarding $\exsltwo_{dg}$ as a chain complex with differential given by the adjoint action of $d$.  Both lemmas are proved in \cite{GLW17}.

\begin{lemma}[\cite{GLW17}; Lemma 6]
Let $\tilde{v_0} = [e,v_{-2}] = -[f,v_2]$, and let $x = [v_2,v_{-2}] = -[d,D] = \frac{1}{2}[\tilde{v}_0,\tilde{v}_0]$.  Then the set $\{e,f,h,v_2,v_{-2},\tilde{v_0},d,D,x\}$ forms a basis of $\exsltwo_{dg}$.
\end{lemma}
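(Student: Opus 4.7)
The plan is to prove the lemma in two steps: first show that the nine listed elements span $\exsltwo_{dg}$, then show linear independence by exhibiting a concrete nine-dimensional realization.

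For spanning, since $\exsltwo_{dg}$ is presented by the seven generators $\{e,f,h,v_2,v_{-2},d,D\}$ and their defining relations, every element is an iterated bracket of generators. I would reduce such iterated brackets to the proposed basis. Single brackets among the generators close on the $\sltwo$-triple and on $\{h,v_{\pm 2}\}$, and produce at most two new elements: $\tilde{v}_0 := [e,v_{-2}]=-[f,v_2]$ and $x := [v_2,v_{-2}]=-[d,D]$. A short super-Jacobi calculation then handles everything further; a typical sample is
\[
[e,\tilde{v}_0] = -[e,[f,v_2]] = -[h,v_2] - [f,[e,v_2]] = -2v_2,
\]
with analogous formulas giving $[f,\tilde{v}_0]=2v_{-2}$, $[h,\tilde{v}_0]=0$, and $[\tilde{v}_0,\tilde{v}_0]=2x$. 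Using the two expressions for $x$ together with the hypothesis that $d$ and $D$ kill every element of $\{e,f,h,v_2,v_{-2}\}$, the super-Leibniz rule shows that $x$ is central and that $\tilde{v}_0$ commutes with each of $v_2,v_{-2},d,D$. Collecting these, the bracket is closed on the nine proposed elements, so they span.

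For linear independence, I would construct a concrete $\ZZ$-graded super vector space $V$ of dimension nine with basis $\{E,F,H,V_2,V_{-2},\widetilde{V}_0,\delta,\Delta,X\}$ placed in the expected degrees, and equip it with the bracket dictated by the formulas that emerged above: the six-element block $\{E,F,H,V_2,V_{-2},\widetilde{V}_0\}$ carries the usual $\exsltwo$ bracket, $X$ is central, $\delta$ and $\Delta$ kill every other basis element, $[\delta,\Delta]=-X$, and $[\widetilde{V}_0,\widetilde{V}_0]=2X$. After verifying the super Jacobi identity on triples of basis elements to make $V$ a Lie superalgebra, all defining relations of $\exsltwo_{dg}$ hold in $V$ by construction, so the universal property of the presentation yields a Lie superalgebra homomorphism $\varphi:\exsltwo_{dg}\to V$ sending each generator to its capitalized counterpart. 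Since $\varphi$ maps the nine spanning elements of $\exsltwo_{dg}$ to the nine linearly independent basis elements of $V$, they must be linearly independent in $\exsltwo_{dg}$.

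The main obstacle is the Jacobi verification inside $V$: though there are many candidate triples a priori, the $\ZZ$-grading and $h$-weight restrictions, together with the fact that $\delta,\Delta$ annihilate every basis element other than each other and that $X$ is central, collapse this to a short list. What remains is either a Jacobi identity inside the $\exsltwo$-block (already known from \cite{GLW17}) or a direct verification of the form $[\delta,[V_2,V_{-2}]] = [[\delta,V_2],V_{-2}] \pm [V_2,[\delta,V_{-2}]] = 0$, where both sides are manifestly zero.
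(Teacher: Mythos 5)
The paper does not prove this lemma itself---it is stated as \cite[Lemma~6]{GLW17}, so there is no in-paper argument to compare your proposal against. Your two-step strategy is the natural one for identifying a basis of a Lie superalgebra presented by generators and relations: (i) show the nine elements span by reducing iterated brackets of $\{e,f,h,v_2,v_{-2},d,D\}$ via the super Jacobi identity, and (ii) establish linear independence by exhibiting a concrete nine-dimensional Lie superalgebra $V$ realizing all the relations and invoking the universal property of the presentation. Your sample computations check out against the defining relations: $[e,\tilde v_0]=-2v_2$, $[\tilde v_0,\tilde v_0]=2x$, $d$ and $D$ annihilating $\tilde v_0$, and the centrality of $x$ all follow from short super-Jacobi calculations exactly as you indicate. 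The one place you leave work implicit is the Jacobi verification in $V$, but your observation that the $\ZZ$-grading and $h$-weight constraints, together with the fact that $d$ and $D$ kill everything except each other, collapse the list of nontrivial triples to a handful is correct, and each surviving triple is either internal to the $\exsltwo$-block or trivially zero. The argument is sound.
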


\begin{lemma}[\cite{GLW17}; Lemma 7]
\label{lemma:hom}
  The homology of the chain complex $(\exsltwo_{dg}$, $[d,\cdot])$ is isomorphic to the direct sum of $\exsltwo$ and the trivial Lie superalgebra.  That is, $H(\exsltwo_{dg}, [d,\cdot]) \cong \exsltwo \oplus \mathbb{Z}.$
\end{lemma}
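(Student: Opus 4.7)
The plan is to compute the differential $\partial := [d, \cdot]$ explicitly on the basis produced in the previous lemma and read off its homology. Inspection of the defining relations shows that $\partial$ annihilates each of $e, f, h, v_2, v_{-2}$ and $d$ outright. Since $\partial$ is a degree-one super-derivation of the Lie bracket, the super Leibniz rule combined with $\partial e = \partial v_{-2} = 0$ gives
\[
\partial \tilde{v}_0 \;=\; [d, [e, v_{-2}]] \;=\; [[d, e], v_{-2}] + [e, [d, v_{-2}]] \;=\; 0,
\]
and the identical computation on $x = [v_2, v_{-2}]$ yields $\partial x = 0$. The only basis element outside $\ker \partial$ is $D$, which is sent to $\partial D = [d, D] = -x$. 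Consequently $\ker \partial$ is the span of $\{e, f, h, v_2, v_{-2}, \tilde{v}_0, d, x\}$ and $\operatorname{im} \partial$ is the one-dimensional span of $\{x\}$, so the homology has basis $\{[e], [f], [h], [v_2], [v_{-2}], [\tilde{v}_0], [d]\}$.

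Next I would exhibit an explicit isomorphism from this homology to $\exsltwo \oplus \ZZ$ by sending the first six classes to the standard generators $e, f, h, v_2, v_{-2}, v_0$ of $\exsltwo$ and $[d]$ to a generator of the $\ZZ$ summand. All defining bracket relations of $\exsltwo_{dg}$ involving only $e, f, h, v_{\pm 2}$ coincide on the nose with those of $\exsltwo$; the sole exception $[v_2, v_{-2}] = x$ becomes $0$ in homology since $x$ is a boundary, matching the relation $[v_2, v_{-2}] = 0$ in $\exsltwo$. The three brackets $[e, \tilde{v}_0]$, $[f, \tilde{v}_0]$, and $[h, \tilde{v}_0]$ are not imposed in the presentation, so I would derive them from the super Jacobi identity applied, respectively, to the triples $(e, f, v_2)$, $(f, e, v_{-2})$, and $(h, e, v_{-2})$. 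These calculations output $-2 v_2$, $2 v_{-2}$, and $0$, precisely reproducing the $\exsltwo$ relations for $v_0$.

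Finally, I would verify that $[d]$ is central in the homology: $[d, d] = 0$ and $[d, y] = 0$ for $y \in \{e, f, h, v_2, v_{-2}\}$ hold by the defining relations, and $[d, \tilde{v}_0] = 0$ is precisely the computation already carried out above. Hence $\mathrm{span}\{[d]\}$ is a one-dimensional abelian direct summand, completing the decomposition $H(\exsltwo_{dg}, [d, \cdot]) \cong \exsltwo \oplus \ZZ$ as Lie superalgebras. The main nuisance is keeping track of super-signs when invoking the Jacobi identity to derive the hidden $\tilde{v}_0$ brackets; once those match the defining relations of $\exsltwo$, the identification is essentially bookkeeping.
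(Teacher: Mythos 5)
The paper does not supply its own proof of this lemma: it is stated with the attribution ``[\cite{GLW17}; Lemma 7]'' and the surrounding text explicitly says ``Both lemmas are proved in \cite{GLW17},'' so there is nothing in this document to compare against. Assessed on its own terms, your computation is the natural direct one and it is correct. The differential $[d,\cdot]$ kills $e,f,h,v_2,v_{-2},d$ by the defining relations, kills $\tilde v_0$ and $x$ by the super-Leibniz/Jacobi argument you give, and sends $D\mapsto -x$, so the homology is seven-dimensional with the basis you name, and the bracket computations you sketch for $[e,\tilde v_0]$, $[f,\tilde v_0]$, $[h,\tilde v_0]$ do reproduce the $\exsltwo$ relations $[e,v_0]=-2v_2$, $[f,v_0]=2v_{-2}$, $[h,v_0]=0$. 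The only point I would flag is that the identification with $\exsltwo\oplus\ZZ$ as Lie superalgebras also requires checking $[v_{\pm 2},\tilde v_0]=0$ and that $[\tilde v_0,\tilde v_0]=2x$ vanishes in homology; the latter you implicitly cover by noting $x$ is a boundary, and the former follows from further short Jacobi computations (e.g.\ the triple $(v_2,f,v_2)$ forces $2[v_2,\tilde v_0]=0$), but you do not list them. These are the kind of ``bookkeeping'' steps you acknowledge, so the proposal is essentially complete; just be sure to include the full set of bracket checks in a final write-up.
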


\section{Annular Khovanov homology}
\label{sec:annularhomology}

In this section, we review the construction of annular Khovanov homology and recall some of its structure.  For other expositions; see \cite{R13}, \cite{SZ18}, and \cite{GLW17}.  To start, let $L\subset A \times I$ be a link in the thickened annulus.  The link $L$ admits a diagram $P(L) \subset A$ by considering the projection $A\times I \to A \times \{0\}$, and this diagram can be regarded as sitting inside of $S^2 - \{\XX, \OO\}$, where $\XX$ is a basepoint representing the inner boundary of $A$, and $\OO$ is a basepoint representing the outer boundary of $A$; see Figure \ref{AnnularDiagram}.

\begin{figure}[h]
	\centering
	\scalebox{1}{\input{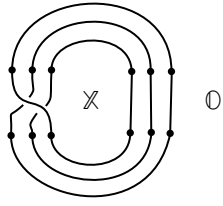}}
	\caption{A diagram $P(L) \subset S^2-\{\XX,\OO\}$ of an annular link $L$, where $\XX$ and $\OO$ represent the inner and outer boundaries of the annulus, respectively.}
	\label{AnnularDiagram}
\end{figure}

If we ignore the basepoint $\XX$, we can form the ordinary Khovanov complex $\CKh(P(L))$.  $\CKh(P(L))$ is generated by oriented Kauffman states, where circles are labeled either $v_+$ or $v_-$.  $\CKh(P(L))$ is also bigraded, where an element of $\CKh^{i,j}(P(L))$ is said to have homological grading $i$ and quantum grading $j$.  Formulas for these gradings are given in \cite{Z18} and \cite{R13}.  

The addition of the basepoint $\XX$ endows $\CKh(P(L))$ with a third grading $k$, called the $k$-grading or the winding-number grading.  For a fixed generator, the associated Kauffman state is a collection of oriented circles, and the $k$-grading is defined to be the algebraic intersection number of this collection of circles with an oriented arc from $\XX$ to $\OO$ that misses all crossings of $P(L)$.  Another way to compute the $k$-grading is to count the number of positively-labeled nontrivial circles and subtract the number of negatively-labeled nontrivial circles, where a nontrivial circle is a circle that separates $\XX$ and $\OO$.  In \cite{R13}, it is proved that the Khovanov differential $\partial$ does not increase the $k$-grading, and so this gives rise to a filtration on $\CKh(P(L))$.  The annular Khovanov homology $\AKh(P(L))$ is the homology of the associated graded object.  Said differently, we can decompose the Khovanov differential as $\partial=\partial_0 + \partial_-$, where $\partial_0$ and $\partial_-$ are the $k$-preserving and $k$-decreasing parts of $\partial$, respectively.  $\AKh(P(L))$ is the homology of the triply-graded chain complex $(\CKh(P(L)), \partial_0)$.  Moreover, up to isomorphism, the annular Khovanov homology does not depend on the diagram $P(L)$ representing $L$, so it makes sense to write $\AKh(L)$.

It is instructive to see how the differential $\partial_0$ of the annular Khovanov complex differs from the usual Khovanov differential $\partial$.  To do so, we need to examine how the $k$-gradings of generators change under merge and split maps.  Denoting trivial circles by T's and nontrivial circles by N's, the three possibilities are TT $\leftrightarrow$ T, NT $\leftrightarrow$ N, and NN $\leftrightarrow$ T; see Figure \ref{MS}.

The formula for the differential $\partial_0$ depends on the types of circles involved, and we list the explicit formulas for each case below.  Recall that trivial circles are labeled by either $w_+$ or $w_-$ and nontrivial circles are labeled by either $v_+$ or $v_-$.

\begin{enumerate}
	\item  When two trivial circles merge into trivial circle, or when a trivial circle splits into two trivial circles:

		    \vspace{.5em}
		    \begin{tabular}{c c}
		\begin{minipage}{0.4\textwidth}
		\begin{center}
			\underline{Merge}\vspace{.5em}

		$\begin{aligned}
			\mathbf{w}_+ &\otimes \mathbf{w}_+ \mapsto \mathbf{w}_+ \\
			\mathbf{w}_+ &\otimes \mathbf{w}_- \mapsto \mathbf{w}_- \\
			\mathbf{w}_- &\otimes \mathbf{w}_+ \mapsto \mathbf{w}_- \\
			\mathbf{w}_- &\otimes \mathbf{w}_- \mapsto 0
		\end{aligned}$
		\end{center}
		\end{minipage} &
		\begin{minipage}{0.4\textwidth}
		\begin{center}
			\underline{Split}\vspace{.5em}

		$\begin{aligned}
			\mathbf{w}_+ &\mapsto \mathbf{w}_+\otimes \mathbf{w}_- + \mathbf{w}_-\otimes \mathbf{w}_+ \\ 
			\mathbf{w}_- &\mapsto \mathbf{w}_-\otimes \mathbf{w}_- \\ 
			& \\ 
			&
		\end{aligned}$
		\end{center}
		\end{minipage}
		\end{tabular}
		\vspace{.5em}

		\item When a trivial circle and a nontrivial circle merge into a nontrivial circle, or when a nontrivial circle splits into a trivial circle and a nontrivial circle:

		\vspace{.5em}
		    \begin{tabular}{c c}
		\begin{minipage}{0.4\textwidth}
		\begin{center}
			\underline{Merge}\vspace{.5em}

		$\begin{aligned}
			\mathbf{w}_+ &\otimes \mathbf{v}_+ \mapsto \mathbf{v}_+ \\
			\mathbf{w}_+ &\otimes \mathbf{v}_- \mapsto \mathbf{v}_- \\
			\mathbf{w}_- &\otimes \mathbf{v}_+ \mapsto 0 \\
			\mathbf{w}_- &\otimes \mathbf{v}_- \mapsto 0
		\end{aligned}$
		\end{center}
		\end{minipage} &
		\begin{minipage}{0.4\textwidth}
		\begin{center}
			\underline{Split}\vspace{.5em}

		$\begin{aligned}
			\mathbf{v}_+ &\mapsto \mathbf{w}_-\otimes \mathbf{v}_+ \\ 
			\mathbf{v}_- &\mapsto \mathbf{w}_-\otimes \mathbf{v}_- \\ 
			& \\ 
			&
		\end{aligned}$
		\end{center}
		\end{minipage}
		\end{tabular}
		\vspace{.5em}

		\item When two nontrivial circles merge into a trivial circle, or when a trivial circle splits into two nontrivial circles:

		\vspace{.5em}
		    \begin{tabular}{c c}
		\begin{minipage}{0.4\textwidth}
		\begin{center}
			\underline{Merge}\vspace{.5em}

		$\begin{aligned}
			\mathbf{v}_+ &\otimes \mathbf{v}_+ \mapsto 0 \\
			\mathbf{v}_+ &\otimes \mathbf{v}_- \mapsto \mathbf{w}_- \\
			\mathbf{v}_- &\otimes \mathbf{v}_+ \mapsto \mathbf{w}_- \\
			\mathbf{v}_- &\otimes \mathbf{v}_- \mapsto 0
		\end{aligned}$
		\end{center}
		\end{minipage} &
		\begin{minipage}{0.4\textwidth}
		\begin{center}
			\underline{Split}\vspace{.5em}

		$\begin{aligned}
			\mathbf{w}_+ &\mapsto \mathbf{v}_+\otimes \mathbf{v}_- + \mathbf{v}_-\otimes \mathbf{v}_+ \\ 
			\mathbf{w}_- &\mapsto 0 \\ 
			& \\ 
			&
		\end{aligned}$
		\end{center}
		\end{minipage}
		\end{tabular}
		\vspace{.5em}

\end{enumerate}

\begin{figure}[H]
		\centering
		\scalebox{1}{\input{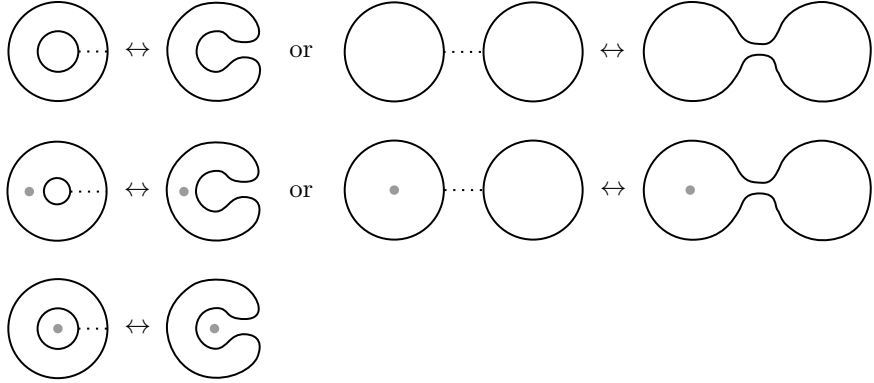}}
		\vspace{1em}
    	\caption{The various ways the operations of merging and splitting along a crossing (indicated by a dashed line) interact with a basepoint. The top illustrates the case of two trivial circles merging into trivial circle (or a trivial circle splitting into two trivial circles).  The middle illustrates the case of a trivial circle and a nontrivial circle merging into a nontrivial circle (or a nontrivial circle splitting into a trivial circle and a nontrivial circle).  The bottom illustrates the case of nontrivial circles merging into a trivial circle (or a trivial circle splitting into two nontrivial circles).} 
    	\label{MS}
\end{figure}

We end this section by briefly describing the $\sltwo$ representation structure on $\AKh(L)$, referring the reader to \cite{GLW17} for details.  Fix a resolution of $P(L)$.  Nontrivial circles, with respect to the basis $\{v_+, v_-\}$, are assigned the 2-dimensional defining representation of $\sltwo$, defined by
\[ h \mapsto \left(\begin{array}{cc} 1 & 0\\
					0 & -1\end{array}\right), \,\, e \mapsto \left(\begin{array}{cc} 0 & 1\\
														0 & 0\end{array}\right), \,\,
				f \mapsto \left(\begin{array}{cc} 0 & 0\\
								1 & 0\end{array}\right).\]
Trivial circles are assigned the 2-dimensional trivial representation.  The resolution is then assigned the tensor product of these representations.  We take the direct sum of all of these representations to obtain the structure of an $\sltwo$-representation on $\CKh(P(L))$.  This action descends to an action on the homology $\AKh(L)$, which Grigsby-Licata-Wehrli then upgrade to an action of $\exsltwo$.  They show that the annular boundary maps commute with the $\sltwo$-action, implying that the $\sltwo$-action holds at the chain level.  In contrast, the $\exsltwo$-action is well-defined on $\AKh(L)$, but at the chain level, it only holds up to homotopy \cite{GLW17}.  This observation leads us to consider this situation in terms of $L_\infty$-algebras and modules.

\section{$L_\infty$-algebras and modules}

For the reader's convenience, we review some basic definitions from the theory of $L_\infty$-algebras and modules.  More details can be found in a survey article by Stasheff in \cite{S18}.  Another good overview of the subject is given in \cite{KS22}.  For a more graphical approach to the definitions and results in this section; see \cite{D22}.

\subsection{Preliminaries}

\begin{definition}
		Let $\sigma \in S_n$ be a permutation.  If $X$ is a set, then $\sigma$ induces a map $\sigma^{\bullet}: X^n \to X^n$, defined by $\sigma^{\bullet}(x_1, x_2, \ldots, x_n) = (x_{\sigma(1)}, \ldots, x_{\sigma(n)})$.  If $X$ is a vector space, $\sigma$ induces a similarly-defined map on the $n$-fold tensor product $\sigma^{\bullet}: X^{\otimes n} \to X^{\otimes n}$.
	 \end{definition} 

	 \begin{definition}
	 	Let $i_1, i_2, \ldots, i_r$ be non-negative integers, with $i_1 + i_2 + \cdots + i_r = n$.  A permutation $\sigma\in S_n$ is called an \textbf{$(i_1, i_2, \ldots, i_r)$-unshuffle} if the restriction of $\sigma$ to the integers in the interval $[i_1+\cdots + i_l + 1, i_l + \cdots + i_{l+1}]$ is order-preserving for all $0\leq l \leq r-1$.  We will denote the set of $(i_1, i_2, \ldots, i_r)$-unshuffles in $S_n$ by $S(i_1, \ldots, i_r)$. 
	 \end{definition} 

	 \begin{definition}
	 	 We will denote by $S'(i_1, \ldots, i_r)$ the set of $(i_1, i_2, \ldots, i_r)$-unshuffles $\sigma$ in $S_n$ satisfying $i_1 \leq i_2 \leq \cdots \leq i_r$ and $\sigma(i_1 + \cdots + i_{l-1} + 1) < \sigma(i_1 + \cdots + i_l+1)$ if $i_l = i_{l+1}$.
	 \end{definition}

	 \begin{definition}
	 	We will denote by $\overline{S}(i_1,\ldots, i_r)$ the set of $(i_1, i_2, \ldots, i_r)$-unshuffles $\sigma$ in $S'(i_1,\ldots,i_r)$ satisfying $\sigma(1)=1$.
	 \end{definition}

	 \begin{definition}
	 	Let $V$ be a graded vector space.  For $\sigma \in S^n$ and $v_i \in V$, let $\epsilon(\sigma):= \epsilon(\sigma,v_1, \ldots, v_n)$ be the total Koszul sign of $\sigma$.  To compute $\epsilon(\sigma)$, every time two elements of degrees $x$ and $y$ are transposed, we record a sign of $(-1)^{xy}$, and $\epsilon(\sigma)$ is the total product of such signs.   Define $\mathbf{\chi(\sigma)}:=\epsilon(\sigma)\sgn(\sigma)$ to be the product of the Koszul sign and the sign of the permutation $\sigma$.
	 \end{definition}

	 \begin{remark}
	 	Let $f: A\to B$ and $g: C\to D$ be graded maps of graded algebras.  We will also follow the Kozsul sign convention of including a sign in the evaluation of the map $f\otimes g$.  That is, for an element $x\otimes y \in A\otimes C$, 
	 	$$(f\otimes g)(x\otimes y) = (-1)^{|x||g|}f(x) \otimes g(y).$$
	 \end{remark}

\begin{definition}
	Let $V$ be a graded vector space.  An \textbf{$L_\infty$-algebra structure} on $V$ is a collection of skew-symmetric multilinear maps $\{l_k: V^{\otimes k} \to V\}$ of degree $k-2$.  That is, each $l_k$ is skew-symmetric in the sense that
	$$l_k\circ \sigma^{\bullet}(x_1,x_2,\ldots,x_k) = \chi(\sigma) l_k(x_1, x_2, \ldots, x_k)$$
	for all $\sigma\in S_k$ and $x_i\in V$.  These maps also must satsify the generalized Jacobi identity:
	$$\sum_{i+j=n+1}\sum_{\sigma} \chi(\sigma) (-1)^{i(j-1)} l_j \circ (l_i \otimes \Id) \circ \sigma^{\bullet} = 0$$
	Here, $i\geq 1$, $j\geq 1$, $n\geq 1$, and the inner summation is taken over all $(i,n-i)$-unshuffles.
\end{definition}

\begin{remark}
	This definition follows the chain complex convention.  If instead our $L_\infty$-algebra is a cochain complex, we require each $l_k$ to have degree $2-k$.  There are similar cohain complex conventions for the following definitions.
\end{remark}

	\begin{definition}
		Let $(L,l_i)$ and $(L', l_i')$ be $L_\infty$-algebras.  An \textbf{$L_\infty$-algebra homomorphism} from $L$ to $L'$ is a sequence of skew-symmetric multilinear maps $\{f_n: L^{\otimes n} \to L'\}$ of degree $n-1$ such that 
		\begin{align*}
			\sum_{j+k = n+1} \sum_{\sigma\in S(k,n-k)} \epsilon_1 \cdot f_j \circ (l_k \otimes \Id) \circ \sigma^{\bullet}  +  \sum_{\substack{\tau \in S'(i_1,\ldots,i_r) \\  i_1+\ldots+i_r=n}} \epsilon_2 \cdot l_r' \circ (f_{i_1} \otimes \cdots \otimes f_{i_r})\circ \tau^{\bullet} = 0
		\end{align*}
		where $\epsilon_1 = \chi(\sigma) (-1)^{k(j-1)+1}$ and $\epsilon_2 = \chi(\tau) (-1)^{\frac{r(r-1)}{2} + \sum_{s=1}^{r-1} i_s(t-s)}$.
		
	\end{definition}

\begin{definition}
	Let $(L,l_k)$ be an $L_\infty$-algebra.  The data of an \textbf{$L_\infty$-module} over $L$ consists of a graded vector space $M$, together with skew-symmetric multilinear maps $\{k_n: L^{\otimes n-1} \otimes M \to M \mid 1\leq n<\infty\}$ of degree $n-2$ satisfying:
	\begin{align*}
			&\sum_{\substack{p+q=n+1 \\ p<n}}\sum_{\sigma(n)=n} \epsilon_1 \cdot k_q\circ (l_p \otimes \Id) \circ \sigma^{\bullet}  +\sum_{p+q=n+1} \sum_{\sigma(p)=n} \epsilon_2\epsilon_3 \cdot k_q\circ \delta^{\bullet} \circ ( k_p\otimes \Id) \circ \sigma^{\bullet} = 0
		\end{align*}
		 where $\epsilon_1 = \epsilon_2 = \chi(\sigma)(-1)^{p(q-1)}$ and $\sigma$ is a $p$-unshuffle in $S_n$.  In the case of $\sigma(p)=n$, we used the skew-symmetry of $k_q$ and introduced $\delta^{\bullet}$ to permute the $k_p$ term past the remaining elements to ensure that $k_q: L^{\otimes q-1}\otimes M \to M$.  Explicitly,
		  \begin{align*}
		 k_q\big( \underbrace{k_p(x_{\sigma(1)}, \ldots, x_{\sigma(p)})}_{\in M}, x_{\sigma(p+1)}, \ldots, x_{\sigma(n)} \big) &= \epsilon_3 \cdot k_q\Big( \delta^{\bullet} \big( \underbrace{k_p(x_{\sigma(1)}, \ldots, x_{\sigma(p)})}_{\in M}, x_{\sigma(p+1)}, \ldots, x_{\sigma(n)} \big) \Big) \\
		 &=\epsilon_3 \cdot k_q\big( x_{\sigma(p+1)}, \ldots, x_{\sigma(n)}, \underbrace{k_p(x_{\sigma(1)}, \ldots, x_{\sigma(p)})}_{\in M} \big)
		 \end{align*}
		 where $\epsilon_3 = \chi(\delta) = (-1)^{q-1}(-1)^{(p+\sum_{s=1}^p |x_{\sigma(s)}|)(\sum_{s=p+1}^{n}|x_{\sigma(s)}|)}$.
\end{definition}

\begin{definition}
	Following \cite{A14}, let $(L, l_i)$ be an $L_\infty$-algebra, and let $(M,k_i)$ and $(M',k_i')$ be $L_\infty$-modules over $L$.  An \textbf{$L_\infty$-module homomorphism} from $M$ to $M'$ is a collection of skew-symmetric multilinear maps  $\{h_n: L^{\otimes{(n-1)}} \otimes M \to M'\}$ of degree $n-1$ satisfying:
	\begin{align*}
		\sum_{\substack{i+j=n+1 \\ i<n}} \sum_{\sigma(n)=n} \epsilon_1 \cdot h_j \circ (l_i \otimes \Id) \circ \sigma^{\bullet}
		& +\sum_{i+j=n+1} \sum_{\sigma(i)=n} \epsilon_2 \cdot h_j \circ \delta^{\bullet} \circ (k_i \otimes \Id) \circ \sigma^{\bullet} \\
	 &+\sum_{r+s=n+1} \sum_{\tau} \epsilon_3 \cdot k_r'\circ (\Id\otimes h_s) \circ (\tau^{\bullet} \otimes \Id)=0
	\end{align*}
	where $\epsilon_1 = \epsilon_2 = \chi(\sigma)(-1)^{i(j-1)+1}$ and $\epsilon_3 = \chi(\tau)(-1)^{(s-1)(\sum_{t=1}^{n-s} x_{\tau(t)})}$, $\sigma$ is an $i$-unshuffle in $S_n$, and $\tau$ is an $(n-s)$-unshuffle in $S_{n-1}$.  Similar to the definition of $L_\infty$-module, we include the permutation $\delta$ to ensure the module element is in the correct location.
\end{definition}

\subsection{Transfer of Structure}
\label{sec:transfer}

It is possible to use an existing $L_\infty$-algebra or $L_\infty$-module to obtain a new $L_\infty$-structure on a particular chain complex.  In this paper, we will use chain contractions to transfer $L_\infty$-structures, and we will also make use of the restriction of scalars functor. 

\begin{definition}
	Let $(A,d_A)$ and $(B,d_B)$ be chain complexes.  A \textbf{chain contraction} from $A$ onto $B$ consists of two chain maps $q: A\to B$ and $i: B\to A$ of degree 0, together with a homotopy $K: A \to A$ of degree 1.  That is, we have the following diagram.
	$$
		\begin{tikzcd}[row sep = normal, column sep=normal]
			 A \arrow[loop left, distance=1em, "K"] \arrow[r, shift left=1, "q"] & B  \arrow[l, shift left=1, "i"]
		\end{tikzcd}
	$$
	These maps $q,i$, and $K$ must satisfy the following conditions:
	\begin{align*}
		& q \circ i = \Id_B \quad \mbox{ and } \quad \Id_A - i\circ q = K\circ d_A + d_A \circ K  \\ 
		& K^2 = K\circ i = q\circ K = 0
	\end{align*}
	We will denote a chain contraction by $(A,B,i,q,K)$.
\end{definition}

\begin{remark}
	If $(A,d_A)$ and $(B,d_B)$ are cochain complexes, we require $|K|=-1$.
\end{remark}

If $L$ is an $L_\infty$-algebra and $L'$ is a chain complex, formulas exist in the literature for how to transfer the $L_\infty$-algebra structure from $L$ to $L'$, given a chain contraction $(L,L',i,q,K)$.   Following \cite[Theorem 1]{MMF22}, the chain maps $i$ and $q$ can also be extended to $L_\infty$-algebra homomorphisms $I: L' \to L$ and $Q: L\to L'$ such that $Q\circ I = \Id_{L'}$.  The transferred $L_\infty$-algebra structure on $L'$ is unique up to quasi-isomorphism, and the formula for the transferred bracket $\{l'_k\}$ can be given inductively as follows.  Set $K\theta_1=-i$ and define $\theta_n: (L')^{\otimes n} \to L$ for $n\geq 2$ by 
$$\theta_n(x_1, \ldots, x_n)= \sum_{k=2}^n \sum_{\substack{\sigma \in \overline{S}(i_1, \ldots, i_k) \\ i_1+\cdots+i_k=n\\i_1\leq \cdots \leq i_k}} \epsilon_1 \cdot l_k(I_{i_1} \otimes \cdots \otimes I_{i_k})\circ \sigma^\bullet(x_1, \ldots, x_n)$$
where $\epsilon_1$ is given by the Koszul sign convention.  Then for all $n\geq 2$, we define $l'_n = q\circ \theta_n$ and $I_n = K\circ \theta_n$.  

We can also use chain contractions to transfer an $L_\infty$-module structure.  We will make use of this technique in the proof of the invariance of the $\exsltwo_{dg}$ $L_\infty$-module structure under Reidemeister moves.

\begin{theorem}
\label{thm:transfer}
	Let $L$ be an $L_\infty$-algebra, and let $M$ be an $L_\infty$-module over $L$.  Given a chain contraction 
	$$
		\begin{tikzcd}[row sep = normal, column sep=normal]
			 M \arrow[loop left, distance=1em, "T"] \arrow[r, shift left=1, "q"] & M'  \arrow[l, shift left=1, "i"]
		\end{tikzcd}
	$$
	then $M'$ inherits the structure of an $L_\infty$-module over $L$, with transferred bracket given by
	$$k_n':= \sum_{\substack{\tau \in S(i_1, \ldots, i_t) \\ i_1 + \cdots + i_t = n-1}} q \circ A_t \circ (\tau^\bullet \otimes i)$$
	where $A_t: L^{\otimes i_1} \otimes \cdots \otimes L^{\otimes i_t} \otimes M \to M$ is defined inductively as follows.  Let $A_1 = k_{i_1+1}$ and define $A_{t} = A_1 \circ \delta_2^\bullet \circ [(T\circ A_{t-1}) \otimes \Id] \circ \delta_1^\bullet$, where $i_1, \ldots, i_t$ are positive integers; see Figure \ref{fig:transferdef}.  
\end{theorem}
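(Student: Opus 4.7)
The plan is to verify three claims about $\{k'_n\}$: each is skew-symmetric in its $L$-slots, each has degree $n-2$, and together they satisfy the $L_\infty$-module relation over $L$. The key observation is that each $A_t$ unfolds into a left-comb of module operations on $M$: the innermost application is $k_{i_t+1}$ on the last $i_t$ $L$-inputs together with $i(m)$, after which the output is fed through $T$, combined with the previous $i_{t-1}$ $L$-inputs, and hit with $k_{i_{t-1}+1}$; this continues outward until $k_{i_1+1}$ produces an element of $M$ that is then sent to $M'$ by $q$. Summing over $(i_1,\ldots,i_t)$-unshuffles in the definition of $k'_n$ distributes the $n-1$ $L$-inputs across these vertices.

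Skew-symmetry of $k'_n$ in its $L$-slots is then automatic from the unshuffle sum. For the degree, the $t$ operations $k_{i_j+1}$ contribute total degree $\sum_{j=1}^t (i_j-1) = (n-1)-t$, the $(t-1)$ intermediate applications of $T$ contribute $t-1$, and $i,q$ contribute $0$, summing to $n-2$. For the $L_\infty$-module relation, I would expand both sides using the definition and group summands by the shape of their underlying comb. Each summand of the expanded relation is either a comb with an extra $l_p$ inserted among the $L$-inputs of some vertex, or a comb with a nested $k'_p$ swapped in for some sub-comb. Those of the first type, together with those of the second type in which the inserted nesting produces an adjacent $q\circ i$, can be paired and collapsed using $q\circ i = \Id_{M'}$ together with the $L_\infty$-module relation for $(M,k_\bullet)$. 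The remaining summands all contain $T$ next to another operation, and the chain-contraction identities $\Id_M - i\circ q = T\circ d_M + d_M\circ T$ together with $T^2=T\circ i=q\circ T=0$ convert these into the final cancellations needed.

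I expect the main obstacle to be the combinatorial bookkeeping of how residual comb summands of different shapes pair up after applying the chain-contraction identity. The vanishing identities $q\circ T=T\circ i=T^2=0$ kill many summands outright, but precisely matching which surviving comb pairs with which is delicate, which is exactly why the argument is restricted to $\ZZ/2\ZZ$ so that no Koszul signs must be tracked through the tree surgeries. Using the diagrammatic tree language of \cite{D22}, each cancellation amounts to a local move on planar trees, and the overall structure of the proof parallels the standard homotopy-transfer theorem for $L_\infty$-algebras recalled immediately before the statement.
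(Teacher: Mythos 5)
Your overall strategy matches the paper's: expand both sides of the $L_\infty$-module relation via the definition of $k'_n$ (pushing the unshuffles together), apply the $L_\infty$-module relation for $(M,k)$ to the terms containing $l_p$, invoke the chain-contraction identities, and cancel the resulting comb-shaped summands in pairs. The degree count and skew-symmetry remarks are also correct.

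There is, however, a concrete confusion in your description of the central pairing mechanism, and if you executed the proof as outlined you would get stuck. When you expand $k'_q \circ (k'_p \otimes \Id)$ on the right-hand side, the output of $k'_p$ (which ends with $q$) is fed into the input slot of $A_s$ for $k'_q$ (which begins with $i$). The composition that appears between $A_r$ and $A_s$ is therefore $i \circ q$ acting on $M$ --- \emph{not} $q \circ i$ --- and $i \circ q$ is not the identity. No nesting in this expansion produces an adjacent $q \circ i$, so the unit condition $q\circ i = \Id_{M'}$ is not the tool that collapses these terms. The substitution that does the work is $\Id_M - i\circ q = T\circ k_1 + k_1 \circ T$: the $\Id_M$ summand is what matches, one for one, the combs produced from the $l_p$-terms on the left after applying the module relation for $(M,k)$; the $T\circ k_1$ and $k_1 \circ T$ summands re-absorb into $A$-shaped combs and cancel internally (together with the boundary cases $p=1$ and $p=n$, which use the chain-map property $k_1\circ i = i\circ k'_1$ and $q\circ k_1 = k'_1\circ q$). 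You relegate $\Id_M - i\circ q = T\circ d_M + d_M\circ T$ to a secondary role for ``the remaining summands,'' but it is in fact the identity that mediates the primary cancellation. With that correction the argument is the one the paper carries out; as written, the main family of terms would be left unmatched.
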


\begin{figure}[h!]
		\centering
		\scalebox{1}{\input{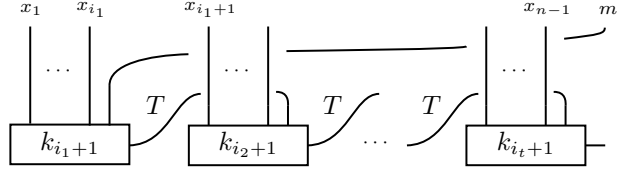}}

		\caption{A graphical depiction of the map $A_t$.}
		\label{fig:transferdef}
	\end{figure}

\begin{remark}
	The permutations $\delta_i$ in the definition of $A_t$ above are required to ensure that the module element is the last input of each $k_{i_r+1}: L^{\otimes r} \otimes M \to M$.  Explicitly, $\delta_i$ is the unique permutation so that $\delta_i^\bullet$ shifts the module element to the required position and preserves the order of the other elements.  For example, in Figure \ref{fig:transferdef}, $\delta_1$ is the permuation 
	$$\delta_1 = \begin{pmatrix}
		1 & \cdots & i_1 & i_1+1 & i_1+2 &  \cdots & n \\
		1 & \cdots & i_1 & n & i_1+1 & \cdots & n-1
	\end{pmatrix}$$
	Throughout the proof of Theorem \ref{thm:transfer}, we will make use of similar permutations $\lambda_i$ to correctly place the module element while preserving the order of the remaining elements.  We will not write down these permutations explicitly, but they can be readily determined by examining the figures in the appendix.
\end{remark}

\begin{remark}
	We remind the reader that we are ignoring signs in the above theorem and that the result is proved over a field of characteristic two.
\end{remark}
\begin{proof}
	We must show that the above definition for $k_n'$ satisfies the $L_\infty$-module relation:

	\begin{align*}
			&\sum_{\substack{p+q=n+1 \\ p<n}}\sum_{\sigma(n)=n} k'_q\circ (l_p \otimes \Id) \circ \sigma^{\bullet} = \sum_{p+q=n+1} \sum_{\sigma(p)=n} k'_q\circ \lambda^{\bullet} \circ ( k'_p\otimes \Id) \circ \sigma^{\bullet} 
		\end{align*}

	The idea of the proof is as follows.  Start by replacing the $k'_q$ and $k'_p$ terms using the definition of $k'_n$.  Next, apply the $L_\infty$-module relation for $k_n$ to the terms involving $l_p$ on the left-hand side.  After that, use fact that $\Id_{M} - i\circ q = k_1T + Tk_1$ to replace terms on the right-hand side.  Terms will then cancel in pairs.  Graphical representations of the formulas in this proof are provided in the appendix.

	\vspace{1em}
	\noindent \textbf{Step 1.} Focusing on the left-hand side of the $L_\infty$-module relation, we can replace $k_n'$ using its definition to obtain the following sum.
	$$\sum_{\substack{p+q = n+1 \\ p< n}} \sum_{\sigma(n)=n} \sum_{\substack{\tau \in S(i_1, \ldots, i_t) \\ i_1 + \cdots + i_t =  n-p}} q \circ A_t \circ (\tau^\bullet \otimes i) \circ (l_p \otimes \Id) \circ \sigma^\bullet$$

	\vspace{1em}
	\noindent \textbf{Step 2.} We can combine $\sigma$ and $\tau$ into $\eta$ and $\psi$.  Since $\tau$ is an unshuffle, the $l_p$ term will be the first element in some block, which we denote by $i_l$.  Defining $s=p+i_l - 1$, we obtain the following sum.
	\begin{align*}
		\sum_{p=1}^{n-1} \sum_{\substack{\eta \in S(i_1, \ldots, p+i_l-1, \ldots, i_t) \\ i_1 + \cdots + i_t =  n-p \\ 1 \leq t \leq n-p \\ 1\leq l \leq t}} \sum_{\psi \in S(p,s-p)} q \circ A_t \circ \left[ \Id \otimes ((l_p \otimes \Id) \circ \psi^\bullet) \otimes \Id \right] \circ (\eta^\bullet \otimes i)
	\end{align*}

	\vspace{1em}
	\noindent \textbf{Step 3.}  The goal now is to unpack the $A_t$ terms using the definition of $A_t$ in order to apply the $L_\infty$-module relation.  Because $A_t$ only makes sense for $t\geq 1$, we break up the sum into several cases.  In the first case, the $l_p$ term is in the first box.  In the second case, the $l_p$ term is somewhere in the middle, in which case we need at least three boxes.  In the third case, the $l_p$ term is in the last box.  Note further that the only way for there to be one box is if $p=n-1$.  We obtain the following sum.

\begin{align*}
		&\sum_{p=1}^{n-2} \  \sum_{\substack{\eta \in S(p+i_1-1, i_2, \ldots, i_t)\\ i_1 + \cdots + i_t=n-p \\ 2 \leq t \leq n-p \\ l=1}} \sum_{\psi \in S(p,s-p)}  q \circ A_{t-1} \circ \lambda^\bullet_3 \circ (T \otimes \Id) \circ \bigg[ \left[k_{i_l+1} \circ (l_p \otimes \Id) \circ \psi^\bullet \right] \otimes \Id \bigg] \\ 
		& \hspace{10em} \circ \lambda_1^\bullet \circ (\eta^\bullet \otimes i)\\
		&+ \sum_{p=1}^{n-3} \ \sum_{\substack{\eta \in S(i_1, \ldots, p+i_l-1, \ldots, i_t)\\ i_1 + \cdots + i_t=n-p \\ 3 \leq t \leq n-p \\ 2 \leq l \leq t-1}} \sum_{\psi \in S(p,s-p)}  q \circ A_{t-l} \circ \lambda^\bullet_3 \circ (T \otimes \Id) \circ \bigg[ \left[k_{i_l+1} \circ (l_p \otimes \Id) \circ \psi^\bullet \right] \otimes \Id \bigg] \\ 
		&\hspace{10em}  \circ \lambda_2^\bullet \circ \left[ (T\circ A_{l-1}) \otimes \Id\right] \circ \lambda_1^\bullet \circ (\eta^\bullet \otimes i)\\
		&+ \sum_{p=1}^{n-2} \ \sum_{\substack{\eta \in S(i_1, \ldots, i_{t-1}, p+i_t-1)\\ i_1 + \cdots + i_t=n-p \\ 2 \leq t \leq n-p \\ l=t}} \sum_{\psi \in S(p,s-p)}  q \circ  \bigg[ \left[k_{i_l+1} \circ (l_p \otimes \Id) \circ \psi^\bullet \right] \otimes \Id \bigg]   \circ \lambda_2^\bullet \circ \left[ (T\circ A_{t-1}) \otimes \Id\right] \\ 
		&\hspace{10em} \circ \lambda_1^\bullet \circ (\eta^\bullet \otimes i)\\
		&+ \sum_{s=n-1} \sum_{p=1}^{s} \ \sum_{\psi \in S(p,s-p)}  q \circ \bigg[ \left[k_{i_l+1} \circ (l_p \otimes \Id) \circ \psi^\bullet \right] \otimes \Id \bigg] \circ (\Id \otimes i)
	\end{align*}

We now reindex over the size of $s=p+i_l-1$.

\begin{align*}
		&\sum_{s=1}^{n-2} \sum_{p=1}^{s} \ \sum_{\substack{\eta \in S(s, i_2, \ldots, i_t)\\ i_1 + \cdots + i_t=n-p \\ 2 \leq t \leq n-p \\ l=1}}  \sum_{\psi \in S(p,s-p)}  q \circ A_{t-1} \circ  \lambda^\bullet_3 \circ (T \otimes \Id) \circ \bigg[ \left[k_{i_l+1} \circ (l_p \otimes \Id) \circ \psi^\bullet \right] \otimes \Id \bigg] \\ 
		&\hspace{10em} \circ \lambda_1^\bullet \circ (\eta^\bullet \otimes i) \\
		&+ \sum_{s=1}^{n-3} \sum_{p=1}^{s} \ \sum_{\substack{\eta \in S(i_1, \ldots, s, \ldots, i_t)\\ i_1 + \cdots + i_t=n-p \\ 3 \leq t \leq n-p \\ 2 \leq l \leq t-1}}  \sum_{\psi \in S(p,s-p)} q \circ A_{t-l} \circ  \lambda^\bullet_3 \circ (T \otimes \Id) \circ \bigg[ \left[k_{i_l+1} \circ (l_p \otimes \Id) \circ \psi^\bullet \right] \otimes \Id \bigg] \\ 
		&\hspace{10em}  \circ \lambda_2^\bullet \circ \left[ (T\circ A_{l-1}) \otimes \Id\right] \circ \lambda_1^\bullet \circ (\eta^\bullet \otimes i)\\
		&+ \sum_{s=1}^{n-2} \ \sum_{p=1}^{s} \sum_{\substack{\eta \in S(i_1, \ldots, i_{t-1},s)\\ i_1 + \cdots + i_t=n-p \\ 2 \leq t \leq n-p \\ l=t}} \sum_{\psi \in S(p,s-p)}  q \circ  \bigg[ \left[k_{i_l+1} \circ (l_p \otimes \Id) \circ \psi^\bullet \right] \otimes \Id \bigg] \\ 
		&\hspace{10em}  \circ \lambda_2^\bullet \circ \left[ (T\circ A_{t-1}) \otimes \Id\right] \circ \lambda_1^\bullet \circ (\eta^\bullet \otimes i) \\ 
		&+ \sum_{s=n-1} \sum_{p=1}^{s} \ \sum_{\psi \in S(p,s-p)}  q \circ \bigg[ \left[k_{i_l+1} \circ (l_p \otimes \Id) \circ \psi^\bullet \right] \otimes \Id \bigg] \circ (\Id \otimes i)
	\end{align*}
We can now apply the $L_\infty$-module relation.

	\begin{align*}
		 & \sum_{s=1}^{n-2} \sum_{p=1}^{s+1} \ \sum_{\substack{\eta \in S(s, i_2, \ldots, i_t)\\ i_1 + \cdots + i_t=n-p \\ 2 \leq t \leq n-p \\ l=1}}  \sum_{\psi \in S(p-1,s-p+1)}  q \circ A_{t-l} \circ \lambda^\bullet_3 \circ (T \otimes \Id) \circ \bigg[ \left[k_{s-p+2} \circ (k_p \otimes \Id) \circ \psi^\bullet \right] \otimes \Id \bigg] \\ 
		 &\hspace{10em}  \circ \lambda_1^\bullet \circ (\eta^\bullet \otimes i)\\
		&+  \sum_{s=1}^{n-3} \sum_{p=1}^{s+1} \ \sum_{\substack{\eta \in S(i_1, \ldots, s, \ldots, i_t)\\ i_1 + \cdots + i_t=n-p \\ 3 \leq t \leq n-p \\ 2 \leq l \leq t-1}}  \sum_{\psi \in S(p-1,s-p+1)} q \circ A_{t-l} \circ  \lambda^\bullet_3 \circ (T \otimes \Id) \circ \bigg[ \left[k_{s-p+2} \circ (k_p \otimes \Id) \circ \psi^\bullet \right] \otimes \Id \bigg] \\ 
		&\hspace{10em}  \circ \lambda_2^\bullet \circ \left[ (T\circ A_{l-1}) \otimes \Id\right] \circ \lambda_1^\bullet \circ (\eta^\bullet \otimes i)\\
		&+  \sum_{s=1}^{n-2} \ \sum_{p=1}^{s+1} \sum_{\substack{\eta \in S(i_1, \ldots, i_{t-1},s)\\ i_1 + \cdots + i_t=n-p \\ 2 \leq t \leq n-p \\ l=t}} \sum_{\psi \in S(p-1,s-p+1)} q \circ  \lambda^\bullet_3 \circ (T \otimes \Id) \circ \bigg[ \left[k_{s-p+2} \circ (k_p \otimes \Id) \circ \psi^\bullet \right] \otimes \Id \bigg] \\ 
		&\hspace{10em}  \circ \lambda_2^\bullet \circ \left[ (T\circ A_{t-1}) \otimes \Id\right] \circ \lambda_1^\bullet \circ (\eta^\bullet \otimes i)\\
		&+  \sum_{s=n-1} \sum_{p=1}^{s+1} \ \sum_{\psi \in S(p-1,s-p+1)}  q \circ  \bigg[ \left[k_{s-p+2} \circ (k_p \otimes \Id) \circ \psi^\bullet \right] \otimes \Id \bigg] \circ (\Id \otimes i)
	\end{align*}

	\vspace{1em}
	\noindent \textbf{Step 4.}  Combine $\psi$ and $\eta$ into $\kappa$, and reintroduce $A_t$ into the notation, treating the cases $p=1$ and $p=s+1$ separately.  Indeed, we observe that for $1 < p < s+1$, we may combine both the $k_p$ and $k_{s-p+2}$ terms into an $A_t$ term.  Otherwise, we will have a $k_1$ term.

	\begin{align*}
		 & \sum_{s=1}^{n-2} \sum_{p=1} \ \sum_{\substack{\kappa \in S(s, i_2, \ldots, i_t)\\ i_1 + \cdots + i_t=n-p \\ 2 \leq t \leq n-p \\ l=1}}   q \circ A_{t} \circ \lambda^\bullet_3 \circ (k_1 \otimes \Id) \circ \lambda_1^\bullet \circ (\kappa^\bullet \otimes i)\\
		  &+\sum_{s=1}^{n-2} \sum_{p=2}^{s} \ \sum_{\substack{\kappa \in S(p-1,s-p+1, i_2, \ldots, i_t)\\ i_1 + \cdots + i_t=n-p \\ 2 \leq t \leq n-p \\ l=1}}   q \circ A_{t} \circ \lambda^\bullet_3 \circ (T \otimes \Id) \circ (A_1 \otimes \Id) \circ \lambda_1^\bullet \circ (\kappa^\bullet \otimes i)\\
		  &+\sum_{s=1}^{n-2} \sum_{p=s+1} \ \sum_{\substack{\kappa \in S(s, i_2, \ldots, i_t)\\ i_1 + \cdots + i_t=n-p \\ 2 \leq t \leq n-p \\ l=1}}   q \circ A_{t-1} \circ \lambda^\bullet_3 \circ ((T\circ k_1) \otimes \Id) \circ (A_1 \otimes \Id) \circ \lambda_1^\bullet \circ (\kappa^\bullet \otimes i)\\
		  &+\sum_{s=1}^{n-3} \sum_{p=1} \ \sum_{\substack{\kappa \in S(i_1, \ldots, i_{l-1},s,i_{l+1}, \ldots, i_t)\\ i_1 + \cdots + i_t=n-p \\ 3 \leq t \leq n-p \\ 2\leq l\leq t}}   q \circ A_{t-l+1} \circ \lambda^\bullet_3 \circ ((k_1 \circ T) \otimes \Id) \circ (A_{l-1}\otimes \Id) \circ \lambda_1^\bullet \circ (\kappa^\bullet \otimes i)\\
		  &+\sum_{s=1}^{n-3} \sum_{p=2}^{s} \ \sum_{\substack{\kappa \in S(i_1, \ldots, i_{l-1},p-1,s-p+1,i_{l+1}, \ldots, i_t)\\ i_1 + \cdots + i_t=n-p \\ 3 \leq t \leq n-p \\ 2\leq l\leq t}}   q \circ A_{t-l+1} \circ \lambda^\bullet_3 \circ A_{l}\otimes \Id) \circ \lambda_1^\bullet \circ (\kappa^\bullet \otimes i)\\
		  &+\sum_{s=1}^{n-3} \sum_{p=s+1} \ \sum_{\substack{\kappa \in S(i_1, \ldots, i_{l-1},s,i_{l+1}, \ldots, i_t)\\ i_1 + \cdots + i_t=n-p \\ 3 \leq t \leq n-p \\ 2\leq l\leq t}}   q \circ A_{t-l} \circ \lambda^\bullet_3 \circ ((T \circ k_1) \otimes \Id) \circ (A_{l}\otimes \Id) \circ  \lambda_1^\bullet \circ (\kappa^\bullet \otimes i)\\
		  &+\sum_{s=1}^{n-2} \sum_{p=1} \ \sum_{\substack{\kappa \in S(i_1, \ldots, i_{t-1},s)\\ i_1 + \cdots + i_t=n-p \\ 2 \leq t \leq n-p \\ l= t}}   q \circ A_{1} \circ \lambda^\bullet_3 \circ ((k_1 \circ T) \otimes \Id) \circ (A_{t-1}\otimes \Id) \circ \lambda_1^\bullet \circ (\kappa^\bullet \otimes i)\\
		  &+\sum_{s=1}^{n-2} \sum_{p=2}^{s} \ \sum_{\substack{\kappa \in S(i_1, \ldots, i_{t-1},p-1,s-p+1)\\ i_1 + \cdots + i_t=n-p \\ 2 \leq t \leq n-p \\ l=t}}   q \circ A_{1} \circ \lambda^\bullet_3 \circ (A_{t}\otimes \Id) \circ \lambda_1^\bullet \circ (\kappa^\bullet \otimes i)\\
		  &+\sum_{s=1}^{n-2} \sum_{p=s+1} \ \sum_{\substack{\kappa \in S(i_1, \ldots, i_{t-1},s)\\ i_1 + \cdots + i_t=n-p \\ 2 \leq t \leq n-p \\ l=t}}   q \circ k_1 \circ \lambda^\bullet_3 \circ (A_t \otimes \Id) \circ \lambda_1^\bullet \circ (\kappa^\bullet \otimes i)\\
		 &+  \sum_{s=n-1} \sum_{p=1} \ \sum_{\kappa=\Id}  q \circ k_{s+1} \circ (k_1 \otimes \Id) \circ \lambda_1^\bullet \circ (\Id \otimes i)\\
		 &+  \sum_{s=n-1} \sum_{p=2}^{s} \ \sum_{\kappa \in S(p-1,s-p+1)}  q \circ k_{s-p+2} \circ (k_p \otimes \Id) \circ \lambda_1^\bullet \circ (\kappa^\bullet \otimes i)\\
		 &+  \sum_{s=n-1} \sum_{p=s+1} \ \sum_{\kappa =\Id}  q \circ k_1 \circ (k_{s+1} \otimes \Id) \circ \lambda_1^\bullet \circ (\kappa^\bullet \otimes i)\\
	\end{align*}

	\vspace{1em}
	\noindent \textbf{Step 5.} We can combine the sums above.  The first term is obtained by combining terms 4 and 7 above.  The second term is obtained by combining terms 3 and 6 above.  The third term is obtained by combining terms 2, 5, 8, and 11 above.  The fourth term is obtained by combining terms 1 and 10.  The last term is obtained by combining terms 9 and 12 above.

	\begin{align*}
		&\sum_{s=1}^{n-2} \sum_{\substack{\kappa \in S(i_1, \ldots, i_{l-1},s, i_{l+1} \ldots, i_t)\\  2\leq t \leq n-1 \\ 2 \leq l \leq t \\ i_1 + \cdots + i_t=n-1 \\ i_l=s}}   q \circ A_{t-l+1} \circ \lambda^\bullet_2 \circ \big[(k_1 \circ T \circ A_{l-1}) \otimes \Id \big] \circ \lambda_1^\bullet \circ (\kappa^\bullet \otimes i)\\
		&+\sum_{s=1}^{n-2} \sum_{\substack{\kappa \in S(i_1, \ldots, i_{l-1},s, i_{l+1} \ldots, i_t)\\  2\leq t \leq n-1-s \\ 1 \leq l \leq t-1 \\ i_1 + \cdots + i_t=n-1-s \\ i_l=0}}  q \circ A_{t-l} \circ \lambda^\bullet_2 \circ \big[(T\circ k_1 \circ A_l) \otimes \Id\big] \circ \lambda_1^\bullet \circ (\kappa^\bullet \otimes i)\\
		&+\sum_{s=1}^{n-1} \ \sum_{p=2}^{s} \sum_{\substack{\kappa \in S(i_1, \ldots, i_{l-1}, p-1,s-p+1, i_{l+1}, \ldots, i_t)\\ 1\leq t \leq n-p \\ 1\leq l\leq t \\ i_1 + \cdots + i_t=n-p \\ i_l=s+1-p}}   q \circ A_{t-l+1} \circ \lambda^\bullet_2 \circ \big[(\Id \otimes A_l) \otimes \Id \big] \circ \lambda_1^\bullet \circ (\kappa^\bullet \otimes i)\\
		&+\sum_{s=1}^{n-1} \ \sum_{\substack{\kappa \in S(s, i_2, \ldots, i_t) \\ 1 \leq t \leq n-1 \\ l=1\\i_1 + \cdots + i_t=n-1 \\  i_l=s}}   q \circ A_{t} \circ \lambda^\bullet_3 \circ (k_1 \otimes \Id) \circ \lambda_1^\bullet \circ (\kappa^\bullet \otimes i)\\
		&+ \sum_{s=1}^{n-1} \ \sum_{\substack{\kappa \in S(i_1, \ldots, i_{t-1},s) \\ 1 \leq t \leq n-1-s \\ l=t \\ i_1 + \cdots + i_t=n-1-s \\  i_l=0}}    q \circ k_1 \circ \lambda^\bullet_3 \circ (A_t \otimes \Id) \circ \lambda_1^\bullet \circ (\kappa^\bullet \otimes i)
	\end{align*}

	\vspace{1em}
	\noindent \textbf{Step 6.}  Change notation.  In the first sum, let $c_1, \ldots, c_w$ be $i_1, \ldots, i_{l-1}$ and $d_1, \ldots, d_x$ be $s,i_{l+1},\ldots, i_t$.  The conditions $t\geq 2$ and $2\leq l \leq t$ imply that $w,x\geq 1$.  Make similar changes to the other sums.  In the second sum, let $c_1, \ldots, c_w$ be $i_1, \ldots, i_{l-1},s$ and $d_1, \ldots, d_x$ be $i_{l+1},\ldots, i_t$, and in the third sum let $c_1, \ldots, c_w$ be $i_1, \ldots, i_{l-1},p-1$ and $d_1, \ldots, d_x$ be $s-p+1,i_{l+1},\ldots, i_t$.

	\begin{align*}
	&\sum_{\substack{\kappa \in S(c_1, \ldots, c_w, d_1 \ldots, d_x) \\ c_1 + \cdots + c_w + d_1 + \cdots + d_x=n-1 \\ w,x\geq 1}}   q \circ A_{x} \circ \lambda^\bullet_2 \circ \big[(k_1 \circ T \circ A_{w}) \otimes \Id \big] \circ \lambda_1^\bullet \circ (\kappa^\bullet \otimes i)\\
	&+\sum_{\substack{\kappa \in S(c_1, \ldots, c_w, d_1 \ldots, d_x) \\ c_1 + \cdots + c_w + d_1 + \cdots + d_x=n-1 \\ w,x\geq 1}}   q \circ A_{x} \circ \lambda^\bullet_2 \circ \big[(T\circ k_1 \circ A_w) \otimes \Id\big] \circ \lambda_1^\bullet \circ (\kappa^\bullet \otimes i)\\
	&+\sum_{\substack{\kappa \in S(c_1, \ldots, c_w, d_1 \ldots, d_x) \\ c_1 + \cdots + c_w + d_1 + \cdots + d_x=n-1 \\ w,x\geq 1}}   q \circ A_{x} \circ \lambda^\bullet_2 \circ \big[(\Id \otimes A_w) \otimes \Id \big] \circ \lambda_1^\bullet \circ (\kappa^\bullet \otimes i)\\
	&+ \sum_{\substack{\kappa \in S(d_1 \ldots, d_x) \\ d_1 + \cdots + d_x=n-1 \\ x\geq 1}}    q \circ A_{x} \circ \lambda^\bullet_3 \circ (k_1 \otimes \Id) \circ \lambda_1^\bullet \circ (\kappa^\bullet \otimes i)\\
	&+ \sum_{\substack{\kappa \in S(c_1, \ldots, c_w) \\ c_1 + \cdots + c_w = n-1 \\ w \geq 1}}    q \circ k_1 \circ \lambda^\bullet_3 \circ (A_w \otimes \Id) \circ \lambda_1^\bullet \circ (\kappa^\bullet \otimes i)
	\end{align*}

	\vspace{1em}
	\noindent \textbf{Step 7.}  Focusing now on the right-hand side, we substitute for $k_n'$ using its definition.  We consider the cases $p=1$ and $q=1$ separately, and use the fact that $k_1' \circ q = q\circ k_1$ and $k_1 \circ i = i\circ k_1'$, since $i$ and $q$ are chain maps.
	\begin{align*}
	&\sum_{p=2}^{n-1} \ \sum_{\substack{\sigma(p)=n}} \   \sum_{\substack{\alpha \in S(a_1, \ldots, a_r)\\ a_1+\cdots+a_r=p-1}} \sum_{\substack{\beta \in S(b_1, \ldots, b_s)\\ b_1+\cdots+b_s=q-1}} q \circ A_s \circ (\beta^\bullet \otimes i) \circ \lambda^\bullet \circ \left[ (q \circ A_r \circ (\alpha^\bullet \otimes i)) \otimes \Id \right] \circ \sigma^\bullet\\
	&+\sum_{p=1} \ \sum_{\substack{\sigma(p)=n}} \  \sum_{\substack{\beta \in S(b_1, \ldots, b_s)\\ b_1+\cdots+b_s=n-1}}  q \circ A_s \circ (\beta^\bullet \otimes i)  \circ \lambda^\bullet \circ \left[ (k_1 \circ i) \otimes \Id \right] \circ \sigma^\bullet\\
	&+\sum_{p=n} \ \sum_{\substack{\sigma(p)=n}} \   \sum_{\substack{\alpha \in S(a_1, \ldots, a_r)\\ a_1+\cdots+a_r=n-1}}  q \circ k_1 \circ (\beta^\bullet \otimes i) \circ \lambda^\bullet \circ \left[ q \circ A_r \circ (\alpha^\bullet \otimes i) \otimes \Id \right] \circ \sigma^\bullet
	\end{align*}

	\vspace{1em}
	\noindent \textbf{Step 8.}  We can combine $\sigma, \alpha$, and $\beta$ into one unshuffle $\theta$.
	\begin{align*}
	&\sum_{p=2}^{n-1} \   \sum_{\substack{\theta \in S(a_1, \ldots, a_r, b_1, \ldots, b_s)\\ a_1+\cdots+a_r=p-1 \\ b_1 + \cdots + b_s = q-1}}  q \circ A_s \circ \lambda^\bullet_2 \circ \left[ (i\circ q \circ A_r) \otimes \Id \right] \circ \lambda_1^\bullet \circ (\theta^\bullet \otimes i)\\
	&+\sum_{p=1} \  \sum_{\substack{\theta \in S(b_1, \ldots, b_s)\\ b_1+\cdots+b_s=n-1}}  q \circ A_s \circ \lambda^\bullet_2 \circ (k_1 \otimes \Id) \circ \lambda_1^\bullet \circ (\theta^\bullet \otimes i)\\
	&+\sum_{p=n} \   \sum_{\substack{\theta \in S(a_1, \ldots, a_r)\\ a_1+\cdots+a_r=n-1}}  q \circ k_1 \circ \lambda^\bullet_2 \circ (A_r \otimes \Id) \circ \lambda_1^\bullet \circ (\theta^\bullet \otimes i)
	\end{align*}

	\noindent \textbf{Step 9.}  Use the fact that $\Id_{M} - i\circ q = k_1\circ T + T\circ k_1$.

	\begin{align*}
	&\sum_{p=2}^{n-1} \   \sum_{\substack{\theta \in S(a_1, \ldots, a_r, b_1, \ldots, b_s)\\ a_1+\cdots+a_r=p-1 \\ b_1 + \cdots + b_s = q-1}}  q \circ A_s \circ \lambda^\bullet_2 \circ \left[ (k_1 \circ T \circ A_r) \otimes \Id \right] \circ \lambda_1^\bullet \circ (\theta^\bullet \otimes i)\\
	&+\sum_{p=2}^{n-1} \   \sum_{\substack{\theta \in S(a_1, \ldots, a_r, b_1, \ldots, b_s)\\ a_1+\cdots+a_r=p-1 \\ b_1 + \cdots + b_s = q-1}}  q \circ A_s \circ \lambda^\bullet_2 \circ \left[ (T \circ k_1 \circ A_r) \otimes \Id \right] \circ \lambda_1^\bullet \circ (\theta^\bullet \otimes i)\\
	&+\sum_{p=2}^{n-1} \   \sum_{\substack{\theta \in S(a_1, \ldots, a_r, b_1, \ldots, b_s)\\ a_1+\cdots+a_r=p-1 \\ b_1 + \cdots + b_s = q-1}}  q \circ A_s \circ \lambda^\bullet_2 \circ \left[ (\Id \circ A_r) \otimes \Id \right] \circ \lambda_1^\bullet \circ (\theta^\bullet \otimes i)\\
	&+\sum_{p=1} \  \sum_{\substack{\theta \in S(b_1, \ldots, b_s)\\ b_1+\cdots+b_s=n-1}}  q \circ A_s \circ \lambda^\bullet_2 \circ (k_1 \otimes \Id) \circ \lambda_1^\bullet \circ (\theta^\bullet \otimes i)\\
	&+\sum_{p=n} \   \sum_{\substack{\theta \in S(a_1, \ldots, a_r)\\ a_1+\cdots+a_r=n-1}}  q \circ k_1 \circ \lambda^\bullet_2 \circ (A_r \otimes \Id) \circ \lambda_1^\bullet \circ (\theta^\bullet \otimes i)
	\end{align*}

	These terms are precisely the terms in Step 6, and so the terms cancel in pairs.  Hence $L_\infty$-module relation holds for $k'_n$.
\end{proof}

\begin{lemma}
\label{lem:transfermorphism}
	In the setting of Theorem \ref{thm:transfer}, if $M'$ has the $L_\infty$-module structure induced by a chain contraction, then the map $i: M' \to M$ can be extended to an $L_\infty$-module homomorphism, where $I_1 = i$, and for $n\geq 2$, we define $I_n$ by
	$$I_n:= \sum_{\substack{\tau \in S(i_1, \ldots, i_t) \\ i_1 + \cdots + i_t = n-1}} T \circ A_t \circ (\tau^\bullet \otimes i)$$
	The map $A_t: L^{\otimes i_1} \otimes \cdots \otimes L^{\otimes i_t} \otimes M \to M$ is defined as in the statemen of \ref{thm:transfer}; see Figure \ref{fig:transferdef}.
\end{lemma}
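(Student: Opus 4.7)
The plan is to mirror the nine-step argument used for Theorem \ref{thm:transfer}, exploiting the fact that $I_n$ differs from $k_n'$ only at the outermost layer --- $T$ in place of $q$. The three sums that must cancel in the $L_\infty$-module homomorphism relation correspond to (A) terms built from the Lie brackets $l_i$ on $L$, (B) terms built from the transferred module maps $k_i'$ on $M'$, and (C) terms built from the original module maps $k_r$ on $M$ applied to outputs of $I_s$. After substituting the definitions of $I_n$ and $k_n'$, every surviving term will be of the form $T$ (or $k_1$) precomposed with an iterated $A_t$-tree acting on $L^{\otimes(n-1)}\otimes i(M')$.

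First I would expand the $I_j$ in sum (A) according to its definition, merge the outer unshuffle $\sigma$ with the inner unshuffle coming from $I_j$ into a single unshuffle $\eta$, and inductively unpack $A_t$ via the recursion $A_t = A_1 \circ \delta_2^\bullet \circ [(T\circ A_{t-1})\otimes \Id]\circ \delta_1^\bullet$ until the $l_p$ input appears as a direct argument of some $k_{i_l+1}$. This is the exact combinatorial reorganization of Steps 2--3 in the proof of Theorem \ref{thm:transfer}, just with the leading $q$ replaced by $T$. I would then apply the $L_\infty$-module relation for $(M,k_n)$ to rewrite the $l_p$-contribution as a sum of nested $k$-compositions, obtaining terms that regroup into $A_t$-pieces decorated by $k_1 \circ T$ or $T\circ k_1$ boundary factors.

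Next I would expand sum (B) using the definition $k_i' = q\circ A_r \circ (\alpha^\bullet \otimes i)$, combine all unshuffles into a single $\theta$, and insert the homotopy identity $\Id_M - i\circ q = k_1\circ T + T\circ k_1$ at the $i\circ q$ that appears between the inner $A_r$ (from $k_i'$) and the outer $A_s$ (from $I_j$). This splits sum (B) into three contributions: one with $k_1\circ T\circ A_r$, one with $T\circ k_1\circ A_r$, and a third with $A_r$ alone coming from $\Id_M$. The first two match the terms produced from the reorganization of sum (A) and cancel in pairs in the same spirit as Step 6 of Theorem \ref{thm:transfer}. Sum (C) is handled by directly expanding each $I_s = T\circ A_t\circ(\tau^\bullet\otimes i)$; the resulting terms are precisely those in which an outer $k_r$ sits on top of the $T\circ A$ tower, which pair off with the leftover $\Id_M$-contributions from sum (B).

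The main obstacle is the bookkeeping in sum (C): because each $I_s$ begins with an internal $T$ and $k_r$ can interact with it, one must verify that no spurious terms appear. This is where the side conditions $T^2 = T\circ i = q\circ T = 0$ become indispensable --- they kill the terms that would otherwise obstruct the pairwise cancellation, leaving only those that precisely match the residual contributions from sum (B). Once this verification is complete, the proof concludes in the same manner as Theorem \ref{thm:transfer}, working over $\ZZ/2\ZZ$ so that no sign tracking is required.
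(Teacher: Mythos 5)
Your overall strategy matches the paper's, which verifies the relation explicitly only for $n=2$ and then asserts that the general case is ``similar to the proof of Theorem~\ref{thm:transfer}''; so filling in details by mirroring that nine-step argument is the right idea. However, the cancellation pattern you describe is not quite correct, and it would lead you to a dead end if carried out literally. You claim that the $k_1\circ T$ and $T\circ k_1$ contributions from inserting $i\circ q = \Id_M + k_1\circ T + T\circ k_1$ into Sum~(B) all cancel against Sum~(A), while Sum~(C) absorbs only the leftover $\Id_M$-contributions. This is wrong even at $n=2$: there one finds $T k_1 k_2 + T k_1 k_2 + k_1 T k_2 + k_2 + k_2 + k_1 T k_2 = 0$, and tracing which terms pair with which shows that the $k_1\circ T$ piece of Sum~(B) cancels against Sum~(C) (with $r=1$), not against Sum~(A).

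The correct bookkeeping separates Sum~(B) according to whether the outer $I_j$ has $j\geq 2$ (so the $i\circ q$ insertion sits \emph{inside} the outer $A_s$) or $j=1$ (so $I_1 = i$ and the insertion sits \emph{outside}). For $j\geq 2$ all three pieces ($\Id$, $k_1 T$, $T k_1$) are inner decorations of a tower with outer $T$, and these cancel in full against Sum~(A) after the module relation. For $j=1$, one gets $i\circ q\circ A_r = A_r + k_1 T A_r + T k_1 A_r$; the $T k_1$ piece still cancels against the corresponding edge case of Sum~(A), but the $\Id$ and $k_1 T$ pieces cancel against Sum~(C), whose $r\geq 2$ terms are literally $A_{t+1}$ (by the recursion $A_{t+1} = A_1\circ\delta_2^\bullet\circ[(T\circ A_t)\otimes\Id]\circ\delta_1^\bullet$) and whose $r=1$ term is $k_1\circ T\circ A_t$. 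Note also that the side conditions $T^2 = T\circ i = q\circ T = 0$ are not what makes Sum~(C) behave; the key is simply that precomposing $k_r$ with $T\circ A_t$ in the module slot reproduces $A_{t+1}$.
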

\begin{proof}(Sketch).
	We will prove that $I$ satisfies the $L_\infty$-module homomorphism relation for $n=2$.  Indeed, for $x\in L$ and $m\in M'$ we must show that
	\begin{equation}
		I_2(l_1(x),m) + I_2(x,k'_1(m)) + I_1(k'_2(x,m)) = k_2(x_1,I_1(m)) + k_1(I_2(x,m))
	\end{equation}
	Working on the left-hand side and substituting in the definitions of $I$ and $k'$, we get
	\begin{equation}
		T\circ k_2(l_1(x),i(m))+T\circ k_2(x_1,i\circ k'_1(m))+i\circ q\circ k_2(x,i(m))
	\end{equation}
	Next, we use the fact that $i$ is a chain map and that $i\circ q = T\circ k_1 + k_1 \circ T + \Id_M$, to see that (2) is equal to
	\begin{equation}
		T\circ k_2(l_1(x),i(m))+T\circ k_2(x_1,k_1(i(m)))+T \circ k_1 \circ k_2(x,i(m))+ k_1 \circ T \circ k_2(x,i(m)) + k_2(x,i(m))
	\end{equation}
	Applying the $L_\infty$-module relation to the first two terms in (3), we obtain
	\begin{equation}
		T\circ k_1 \circ k_2(x,i(m))+T \circ k_1 \circ k_2(x,i(m))+ k_1 \circ T \circ k_2(x,i(m)) + k_2(x,i(m))
	\end{equation}
	Now, the first two terms cancel, and what remains is $k_2(x_1,I_1(m)) + k_1(I_2(x,m))$, as desired.  The proof of the general case is similar to the proof of Theorem \ref{thm:transfer}
\end{proof}

Another way to obtain a new $L_\infty$-module structure is by using the restriction of scalars functor, which is a functor that relates the categories of modules of two $L_\infty$-algebras.

\begin{theorem}\cite[Theorem 1]{D22}.
\label{restriction}
		Let $L,L'$ be $L_\infty$-algebras over $\FF_2$ and $I: L' \to L$ be an $L_\infty$-algebra homomorphism.  Then there exists a restriction of scalars functor $I^*: L\Mod \to L'\Mod$.
\end{theorem}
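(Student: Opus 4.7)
The plan is to define $I^*$ explicitly on both objects and morphisms and then verify the $L_\infty$-module axioms along with functoriality. On objects, given an $L$-module $(M, \{k_n\})$, we declare $I^*(M)$ to have the same underlying graded vector space as $M$, equipped with the $L'$-action
\begin{equation*}
k'_n \;:=\; \sum_{r \geq 1} \ \sum_{\substack{\tau \in S(i_1,\ldots,i_r) \\ i_1+\cdots+i_r = n-1}} k_{r+1} \circ (I_{i_1} \otimes \cdots \otimes I_{i_r} \otimes \Id_M) \circ (\tau^\bullet \otimes \Id_M).
\end{equation*}
In particular $k'_1 = k_1$ and $k'_2(x,m) = k_2(I_1 x, m)$. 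On morphisms, given an $L$-module homomorphism $\{h_n\} \colon M \to M'$, define $I^*(h) = \{h'_n\}$ by the same formula with each $k_{r+1}$ replaced by $h_{r+1}$.

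The main obstacle is verifying that $\{k'_n\}$ satisfies the $L_\infty$-module relation over $L'$. After substituting the defining formula for $k'_p$ and $k'_q$ into the relation, each resulting term has the shape $k_{r+1} \circ (I_{j_1} \otimes \cdots \otimes I_{j_r} \otimes \Id_M) \circ T$, where the internal tree $T$ either (i) contains an inner $l'_p$ acting on a consecutive block of the inputs, or (ii) contains an inner $k'_p$ acting on a block together with the module element. In case (i) the subexpression $l'_p \circ (I_{i_1}\otimes \cdots \otimes I_{i_p})$ can be rewritten using the $L_\infty$-algebra homomorphism relation for $I$ as a sum of expressions of the form $l_a \circ (I_{b_1}\otimes \cdots \otimes I_{b_a})$ composed with differentials $k_1\circ I$ or $I\circ l'_1$; in case (ii), unpacking the inner $k'_p$ yields a term with a nested $k_c$. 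The two families of terms produced this way are then matched via the $L_\infty$-module relation for the original action $\{k_n\}$ on $M$, and everything cancels in pairs. Working over $\FF_2$ means we only need to exhibit a bijection between the terms on each side rather than track signs, analogous to the combinatorial bookkeeping in the proof of Theorem \ref{thm:transfer}.

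Functoriality is then routine. The identity $I^*(\Id_M) = \Id_{I^*(M)}$ is immediate because only the $r=0$ contribution survives in $h'_1$ when $h$ has only an identity component. For composition, $I^*(g \circ h) = I^*(g) \circ I^*(h)$ follows by substituting the composition formula for $L_\infty$-module homomorphisms into the definition of $I^*$ and reorganizing the nested unshuffle sum; a parallel argument to the one above, but using the $L_\infty$-module homomorphism relation in place of the module relation, shows that $\{h'_n\}$ assembles into a valid $L_\infty$-module homomorphism $I^*(M) \to I^*(M')$.
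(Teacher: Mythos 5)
The paper does not actually prove Theorem~\ref{restriction}; it is cited from \cite[Theorem 1]{D22}, and the body only records the defining formula for the induced operations $k'_n$ and the observation that $(I^*f)_1 = f_1$. So there is no in-paper argument to compare against directly; I can only assess whether your sketch would work.

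Your overall strategy is the right one and is consistent with [D22]: define $k'_n$ by precomposing $k$ with tensor products of the $I_{i_j}$ over unshuffles, substitute into the $L'$-module relation, rewrite inner $l'_p\circ(I_{i_1}\otimes\cdots)$ using the $L_\infty$-homomorphism relation for $I$, and match against terms coming from the $L$-module relation for $(M,k)$. The concrete problem is your index set. You sum over $\tau \in S(i_1,\ldots,i_r)$ for all ordered tuples $(i_1,\ldots,i_r)$ with $\sum_j i_j = n-1$, whereas the correct formula (the one recorded in the paper) sums over $\tau \in S'(i_1,\ldots,i_r)$, which in addition requires $i_1 \leq \cdots \leq i_r$ and, when adjacent block sizes tie, that the block leaders occur in increasing order. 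Over $\FF_2$ this is not cosmetic. Since $k_{r+1}$ is skew-symmetric (over $\FF_2$, literally symmetric), relabeling the $r$ blocks produces equal terms: for instance with $r=2$ on inputs $x_1,x_2,x_3$, the unshuffle $(1\mid 23)$ in $S(1,2)$ and the unshuffle $(23\mid 1)$ in $S(2,1)$ both yield $k_3(I_1(x_1),I_2(x_2,x_3),m)$, so the pair cancels. In general, each unordered partition into $r\geq 2$ blocks is hit exactly $r!$ times across your sum (over orderings of $(i_1,\ldots,i_r)$ and over repeated-block representatives inside each $S(i_1,\ldots,i_r)$), and $r!$ is even for $r\geq 2$; so every $r\geq 2$ contribution vanishes mod~$2$ and your $k'_n$ collapses to $k_2\circ(I_{n-1}\otimes\Id)$ for $n\geq 2$, which will not in general satisfy the $L'$-module relation. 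Replacing $S$ with $S'$ (equivalently, choosing one representative per unordered block decomposition) fixes the formula, and with that correction the cancellation scheme you outline, as well as your treatment of morphisms and functoriality, is the expected argument.
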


In particular, if $(M,k)$ is an $L$-module, then $I^*M:=(M,k')$ is an $L'$-module with induced bracket $k'_n: L^{\otimes n-1} \otimes M \to M$ that is defined by
	\begin{align*}
				k_n' &=\sum_{r=1}^{n-1} \sum_{\substack{\tau\in S'(i_1,\ldots,i_r) \\ i_1+\ldots+i_r=n-1}} k_{r+1} \circ (I_{i_1} \otimes \cdots \otimes I_{i_r} \otimes \Id) \circ (\tau^\bullet \otimes \Id)
	\end{align*}

Moreover, if $f: M\to N$ is a homomorphism of $L_\infty$-modules, $(I^*f)_1=f_1$.  Therefore, if $M$ and $N$ are quasi-isomorphic, so too are $I^*M$ and $I^*N$.

\section{The $L_\infty$-algebra structure on $\exsltwo$}
\label{sec:algebras}

Since $\exsltwo_{dg}$ is a Lie superalgebra, it is an $L_\infty$-algebra with no higher operations.  In this section, we will use a cochain contraction to transfer this $L_\infty$-algebra structure on $\exsltwo_{dg}$ to $\exsltwo$, and then we will show that all higher operations in the $L_\infty$-algebra structure on $\exsltwo$ vanish.

\begin{lemma}
	\label{contraction}
	There exist maps $i$ and $q$ so that the data 
	\begin{equation}
	\label{eqn:contract}
	\tag{$*$}
		\begin{tikzcd}[row sep = normal, column sep=normal]
			 \exsltwo_{dg} \arrow[loop left, distance=1em, "K"] \arrow[r, shift left=1, "q"] & H(\exsltwo_{dg})  \arrow[l, shift left=1, "i"]
		\end{tikzcd}
	\end{equation}
	satisfies the definition of a cochain contraction.
\end{lemma}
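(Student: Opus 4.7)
The plan is to build the contraction explicitly using the basis of $\exsltwo_{dg}$ provided by Lemma 6, namely $\{e,f,h,v_2,v_{-2},\tilde{v}_0,d,D,x\}$, and then read off $i$, $q$, and $K$ from a case-by-case computation of the differential $[d,\cdot]$.

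First I would compute $[d,\cdot]$ on each basis element. The defining relations immediately give $[d,y]=0$ for $y\in\{e,f,h,v_2,v_{-2}\}$. For $\tilde{v}_0=[e,v_{-2}]$ the super Jacobi identity, together with $[d,e]=[d,v_{-2}]=0$, forces $[d,\tilde{v}_0]=0$. The single nontrivial image is $[d,D]=-[v_2,v_{-2}]=-x$, and a second application of Jacobi shows $[d,x]=0$. Hence the cocycles are spanned by $\{e,f,h,v_2,v_{-2},\tilde{v}_0,d,x\}$ and the coboundaries by $\{x\}$, recovering the $7$-dimensional homology $\exsltwo\oplus\ZZ$ predicted by Lemma \ref{lemma:hom}.

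Next, I would define $i:H(\exsltwo_{dg})\to\exsltwo_{dg}$ to be the inclusion sending each of the seven homology classes to the basis representative bearing the same name, and $q:\exsltwo_{dg}\to H(\exsltwo_{dg})$ to be the projection that is the identity on $\{e,f,h,v_2,v_{-2},\tilde{v}_0,d\}$ and zero on $\{D,x\}$. The homotopy $K$ is defined on the basis by
\[
K(x)=-D,\qquad K(D)=0,\qquad K(y)=0\text{ for }y\in\{e,f,h,v_2,v_{-2},\tilde{v}_0,d\},
\]
extended linearly. This has cochain degree $-1$ because $x$ sits in degree $2$ and $D$ in degree $1$.

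Finally, I would verify the four contraction axioms by checking each basis element. The identity $q\circ i=\Id_{H(\exsltwo_{dg})}$ is immediate, and $K^2=K\circ i=q\circ K=0$ follow from inspection, since the only nonzero value of $K$ lands in $D$, on which $K$, $q$, and the image of $i$ all vanish. The remaining identity $\Id-i\circ q=K\circ[d,\cdot]+[d,\cdot]\circ K$ reduces to three cases: on the seven cohomology representatives both sides vanish; on $D$ one computes $K([d,D])=K(-x)=D$ and $[d,K(D)]=0$, summing to $D$; on $x$ one has $K([d,x])=0$ and $[d,K(x)]=[d,-D]=x$. There is no real obstacle here; the only point requiring care is keeping track of signs in the super Jacobi computations for $[d,\tilde{v}_0]$ and $[d,x]$, but since we work over $\ZZ/2\ZZ$ or absorb the sign into $K(x)$ this presents no substantive difficulty.
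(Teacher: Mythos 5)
Your proposal is correct and takes essentially the same approach as the paper: choose $i$ as inclusion, $q$ as projection killing $\{D,x\}$, define $K(x)=-D$ and $K=0$ otherwise, and verify the chain contraction identities by a basis-by-basis check. The only difference is that you spell out the verification of $\Id - i\circ q = K\circ[d,\cdot] + [d,\cdot]\circ K$ in more detail, which the paper leaves as a straightforward check.
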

\begin{proof}
	By Lemma \ref{lemma:hom}, $H(\exsltwo_{dg}) \cong \exsltwo \oplus \ZZ$ with basis $\{v_2, v_{-2}, v_0, d, e,f,h\}$.  Writing out the basis elements of $\exsltwo_{dg}$ and $H(\exsltwo_{dg})$, with their degrees above them, we have
		\begin{center}
		\begin{tabular}{cccc}
			\ & 2 & 1 & 0 \\
			$\exsltwo_{dg}$ & $x$ & $v_2, v_{-2}, \tilde{v}_0, d, D$ & $e,f,h$ \\ 
			$H(\exsltwo_{dg})$ & 0 & $v_2, v_{-2}, \tilde{v}_0, d$ & $e,f,h$ \\ 
		\end{tabular}
		\end{center}
	The maps $i$ and $q$ are easy to define.  Let $i$ lift every element to its corresponding element in $\exsltwo_{dg}$, and let $q$ be the projection back down, sending $x$ and $D$ to 0.  Define the chain homotopy $K$ to be 0 for every element except for $x$, in which case we define $K(x) = -D$.

	It is straightforward to check that $i$ and $q$ are chain maps.  The differential in $\exsltwo$ is 0, so $i\partial = 0$.  Also, $\partial i = 0$, since the elements in the image of $i$ are in the kernel of $[d,\cdot]$, which is the differential in $\exsltwo_{dg}$.  On the other hand, $\partial q = 0$.  Also, $q\partial = 0$, since the only element in the image of $[d,\cdot]$ is $x$, which is sent to 0 by $q$.  It is also straightforward to check that all of the chain contraction conditions are satisfied.  
	\end{proof}

\begin{lemma}
	The Lie superalgebra $H(\exsltwo_{dg})$ inherits an $L_\infty$-algebra structure induced by \eqref{eqn:contract}, and this $L_\infty$-algebra structure has no higher operations. 
\end{lemma}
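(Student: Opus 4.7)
The plan is to apply the transfer formula from Section \ref{sec:transfer} directly to the cochain contraction \eqref{eqn:contract}. Because $\exsltwo_{dg}$ is a Lie superalgebra, its ambient $L_\infty$-structure has only $l_1 = [d,\cdot]$ and $l_2 = [\cdot,\cdot]$ nonzero, so every higher operation $l_k$ with $k \geq 3$ already vanishes on the source. This collapses the inductive sum defining $\theta_n$ to binary splittings $n = i_1 + i_2$ and summands of the form $l_2(I_{i_1} \otimes I_{i_2}) \circ \sigma^\bullet$. Consequently, each transferred operation $l_n' = q \circ \theta_n$ and each higher lift $I_n = K \circ \theta_n$ is assembled entirely from iterated binary brackets together with the maps $I_k$.

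The crucial observation is read off directly from Lemma \ref{contraction}: the homotopy $K$ annihilates every basis element of $\exsltwo_{dg}$ except $x$, where $K(x) = -D$. Hence $\mathrm{im}(K) \subseteq \mathrm{span}(D)$. Since $I_k = K \circ \theta_k$ for $k \geq 2$, a one-line induction on $k$ yields $\mathrm{im}(I_k) \subseteq \mathrm{span}(D)$ for every $k \geq 2$, irrespective of whether the intermediate $\theta_k$ is itself zero.

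Given this, for $n \geq 3$ any binary splitting $n = i_1 + i_2$ with $i_1 \leq i_2$ satisfies $i_2 \geq \lceil n/2 \rceil \geq 2$, so each summand $l_2(I_{i_1} \otimes I_{i_2})$ inputs at least one vector lying in $\mathrm{span}(D)$ into the bracket. The defining relations of $\exsltwo_{dg}$ in Section \ref{sec:liealgebras} stipulate $[D, y] = 0$ for every basis element $y$, so $l_2$ vanishes identically on any tensor containing $D$. Therefore $\theta_n \equiv 0$, and hence $l_n' \equiv 0$ for all $n \geq 3$. The residual operations $l_1'$ and $l_2'$ recover the (zero) differential and the inherited Lie superbracket on $H(\exsltwo_{dg}) \cong \exsltwo \oplus \ZZ$ described by Lemma \ref{lemma:hom}.

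The only bookkeeping step that requires any care is the inductive verification that every higher $I_k$ factors through $\mathrm{span}(D)$; once this is in hand, the complete absence of any nontrivial bracket with $D$ forces the higher $l_n'$ to vanish mechanically. I do not anticipate any substantive obstacle beyond tracking the image of $K$ through the recursion.
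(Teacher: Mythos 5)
Your setup is right — only binary splittings survive in $\theta_n$ because $\exsltwo_{dg}$ has $l_k=0$ for $k\geq 3$, and you correctly observe that $\mathrm{im}(I_k)\subseteq\mathrm{span}(D)$ for $k\geq 2$ since $I_k=K\circ\theta_k$ and $K$ is supported only on $x$. But the step ``$[D,y]=0$ for every basis element $y$, so $l_2$ vanishes identically on any tensor containing $D$, therefore $\theta_n\equiv 0$'' is false. The defining relations give $[D,y]=0$ only for $y\in\{e,f,h,v_2,v_{-2}\}$; the remaining relation $[v_2,v_{-2}]+[d,D]=0$ forces $[d,D]=-x\neq 0$. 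Since $d$ lies in the basis of $H(\exsltwo_{dg})$ and $I_1=-i$ lifts $d$ to $-d\in\exsltwo_{dg}$, the binary term $l_2(I_1\otimes I_{n-1})$ with a $d$ plugged into the first slot produces a nonzero multiple of $x$. So $\theta_n$ is \emph{not} identically zero for $n\geq 3$.

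The conclusion still holds, but you need one more observation to close the gap, and it is exactly the one the paper uses: the only nonzero bracket involving $D$ is $[d,D]=\pm x$, and $q(x)=0$ (while $K(x)=-D$, which also keeps your induction on $\mathrm{im}(I_k)$ intact). Thus $l_n'=q\circ\theta_n$ still vanishes, not because $\theta_n$ vanishes, but because $q$ kills its only possible nonzero output. Without invoking $q(x)=0$, your argument does not go through for the terms where one tensor factor is $I_1(d)$.
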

\begin{proof}
	Following section \ref{sec:transfer}, the formulas for the transfered bracket tell us that
	\begin{align*}
			I_n &= \sum_{j=1}^{n-1}\sum_{\sigma \in \overline{S}(j,n-j)} \epsilon(\sigma) \cdot K \circ l_2 \circ (I_j\otimes I_{n-j}) \circ \sigma_\bullet \\
			l_n' &= \sum_{j=1}^{n-1}\sum_{\sigma \in \overline{S}(j,n-j)} \epsilon(\sigma) \cdot q \circ l_2 \circ (I_j\otimes I_{n-j}) \circ \sigma_\bullet \\
		\end{align*}

	Recall that $I_1 = -i$.  For $n=2$, the only unshuffle in $\overline S(1,1)$ is the identity.  So, $I_2(x_1,x_2) = K(l_2(I_1(x_1),I_1(x_2)))$.  Since $K(x) = -D$ and is 0 otherwise, 
	\begin{align*}
		I_2(v_{2},v_{-2}) &= K(x) = -D \\
		I_2(v_{-2},v_{2}) &= K(x) = -D \\ 
		I_2(\tilde{v}_0, \tilde{v}_0) &= K(2x) = -2D \\
		I_2(x_1, x_2) &= 0 \mbox{ otherwise }
	\end{align*}

	Moreover, $l_2'(x_1, x_2) = q(l_2(I_1(x_1), I_1(x_2)))$, and so to compute the bracket of two elements in $H(\exsltwo_{dg})$, we lift them to $\exsltwo_{dg}$, take their bracket in $\exsltwo_{dg}$, and then quotient back to $H(\exsltwo_{dg})$.  

	Now, let $n\geq 3$.  For all $m\geq 2$, $I_m$ is in the image of $K$, and so $I_m(x_1, \ldots, x_m)=cD$ for some scalar $c$.  But then for any $1 \leq j \leq n-1$ and $\sigma \in \overline{S}(j,n-j)$, $q\circ l_2 \circ (I_j\otimes I_{n-j}) \circ \sigma_\bullet$ is 0, since either the $l_2 \circ (I_j\otimes I_{n-j}) \circ \sigma_\bullet$ term is 0, as $[D,y] = 0$ for all $y\in \{e,f,h,v_2,v_{-2}, \tilde{v}_0, D\}$, or $q$ will send this term to 0 since the only nonzero bracket involving $D$ is $[d,D]=-x$, and $q(x) =0$.  Hence $l_n' = 0$ for $n\geq 3$, and so the Lie superalgebra $H(\exsltwo_{dg})$ has no higher operations. 
\end{proof}

	\begin{theorem}
		The Lie superalgebra $\exsltwo$ inherits an $L_\infty$-algebra structure as a subalgebra of $H(\exsltwo_{dg})$, and this $L_\infty$-algebra structure has no higher operations. 
	\end{theorem}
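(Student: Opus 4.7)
The plan is to deduce this theorem directly from the preceding lemma by verifying that $\exsltwo$ sits inside $H(\exsltwo_{dg})$ as a closed subspace under the transferred $L_\infty$-operations. Recall from Lemma~\ref{lemma:hom} that $H(\exsltwo_{dg}) \cong \exsltwo \oplus \ZZ$ as Lie superalgebras, where the $\ZZ$ summand is generated by the class of $d$, and that the previous lemma endows $H(\exsltwo_{dg})$ with an $L_\infty$-structure $\{l_n'\}$ whose only nonzero operation is the transferred bracket $l_2'$.

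First, I would verify that $\exsltwo$ is closed under $l_2'$. Identifying $\exsltwo$ with the subspace of $H(\exsltwo_{dg})$ spanned by $\{e,f,h,v_2,v_{-2},\tilde v_0\}$, the formula
\[
l_2'(x,y) = q\bigl(l_2(i(x),i(y))\bigr)
\]
from the preceding proof reduces the closure check to inspecting the bracket relations of $\exsltwo_{dg}$ on the lifts of these basis elements. Scanning the relations in Section~\ref{sec:liealgebras}, every such bracket either lands in the span of $\{e,f,h,v_2,v_{-2},\tilde v_0\}$ (hence in $\exsltwo$), or produces the element $x = [v_2,v_{-2}]$, which is sent to $0$ by $q$; in no case does one produce $d$ or $D$. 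Consequently $l_2'$ restricted to $\exsltwo$ agrees with the defining bracket of $\exsltwo$ from Section~\ref{sec:liealgebras}, confirming closure.

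Next I would handle the higher operations. The preceding lemma established that $l_n' = 0$ on $H(\exsltwo_{dg})$ for all $n \geq 3$, so a fortiori $l_n'$ vanishes on the subspace $\exsltwo$. Combined with the previous paragraph, this shows that the restrictions $\{l_n'|_{\exsltwo^{\otimes n}}\}$ define an $L_\infty$-algebra structure on $\exsltwo$ with trivial higher operations, proving the theorem.

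There is no real obstacle here, only a bookkeeping check: one must confirm that the bracket never produces the spurious generator $d$, and that the one relation $[v_2,v_{-2}] = x$ lying outside the naive span is killed by the projection $q$. Both facts are immediate from the presentation of $\exsltwo_{dg}$ and the explicit description of $i,q,K$ from Lemma~\ref{contraction}.
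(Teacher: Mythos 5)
Your proof is correct and takes essentially the same route as the paper. The paper observes that $[d,\cdot]=0$ in $H(\exsltwo_{dg})$ and deduces a direct-sum splitting $H(\exsltwo_{dg})\cong \exsltwo\oplus\ZZ$ as $L_\infty$-algebras; your closure check, that brackets of $\{e,f,h,v_2,v_{-2},\tilde v_0\}$ never output a $d$-component, is the same fact read through skew-symmetry.
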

	\begin{proof}
		The map $H(\exsltwo_{dg}) \to \exsltwo$ that sends $e,f,h, v_2, v_{-2}, \tilde{v}_0, d$ to $e,f,h, v_2, v_{-2}, v_0, 0$ is surjective with 1-dimensional kernel.  So $H(\exsltwo_{dg}) \cong \exsltwo \oplus \ZZ$, where the $\ZZ$ summand is generated by the element $d$.  But the bracket of $d$ with everything in the $\exsltwo$ summand is 0, so $\exsltwo$ is a direct sum not only as a vector space, but also as an $L_\infty$-algebra.  So $\exsltwo$ is an $L_\infty$-algebra as a subalgebra of $H(\exsltwo_{dg})$.
	\end{proof}

\section{The $L_\infty$-module structure on $\CKh(L)$}
\label{sec:CKhmodule}

Viewing $\exsltwo_{dg}$ and $\exsltwo$ as $L_\infty$-algebras, in this section we will exhibit $\CKh(P(L))$ as an $L_\infty$-module over $\exsltwo$.  Also, fix a diagram $P(L)$ of the annular link $L$.  We will simplify notation and write $\CKh(L)$ and $\AKh(L)$ instead of $\CKh(P(L))$ and $\AKh(P(L))$.

\begin{theorem}
\label{thm:wedgestructure}
	Let $L$ be an annular link and $m\in \CKh(L)$.  Then $\CKh(L)$ is an $L_\infty$-module over the $L_\infty$-algebra $\exsltwo$.  One of the higher operations is given in terms of the Lee differential: $k_3(v_{2},v_{-2},m) = \partial_0^{Lee}(m)$.  In particular, the $L_\infty$-module structure is nontrivial if $\partial_0^{Lee}: \CKh(L) \to \CKh(L)$ is nonzero.
\end{theorem}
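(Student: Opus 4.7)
The plan is to realize $\CKh(L)$ as a strict (chain-level) module over the Lie superalgebra $\exsltwo_{dg}$ and then obtain the desired $L_\infty$-module structure over $\exsltwo$ by restricting scalars along an $L_\infty$-algebra homomorphism built from the cochain contraction of Section \ref{sec:algebras}. The guiding observation is that, although the $\exsltwo$-action on $\CKh(L)$ only holds up to homotopy at the chain level, the two extra generators $d$ and $D$ of $\exsltwo_{dg}$ are precisely a differential and a chain homotopy that upgrade this to a strict Lie superalgebra action.

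First I would specify operators on $\CKh(L)$ corresponding to each basis element of $\exsltwo_{dg}$: the generators $e,f,h$ act via the $\sltwo$-representation on nontrivial circles from the end of Section \ref{sec:annularhomology}; the generators $v_2,v_{-2}$ act via the $k$-increasing and $k$-decreasing operators of \cite{GLW17}; the generator $d$ acts as the annular differential $\partial_0$; and $D$ acts as an explicit operator on resolutions (built from ``dual'' merge/split rules on nontrivial circles) chosen so that $[v_2,v_{-2}] + [d,D] = 0$ on the nose. Verifying that these operators satisfy the defining bracket relations of $\exsltwo_{dg}$ is the main technical step. The relations internal to $\exsltwo$ and $[d,-] = 0$ on $\{e,f,h,v_{\pm 2}\}$ reduce largely to results of \cite{GLW17} (commutation of $\partial_0$ with the $\sltwo$-action, and the fact that $v_{\pm 2}$ are chain maps), while the key new identity $[v_2,v_{-2}] + [d,D] = 0$ must be checked resolution-by-resolution by case analysis on the three types of circle configurations described in Section \ref{sec:annularhomology}.

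Given a strict $\exsltwo_{dg}$-module structure, $\CKh(L)$ is automatically an $L_\infty$-module over $\exsltwo_{dg}$ with $k_n = 0$ for $n \geq 3$. By Theorem \ref{restriction}, restricting along the composite $L_\infty$-algebra homomorphism $\exsltwo \hookrightarrow H(\exsltwo_{dg}) \xrightarrow{I} \exsltwo_{dg}$, where the second arrow is the extension of $i$ from the cochain contraction \eqref{eqn:contract} to an $L_\infty$-homomorphism, yields an $L_\infty$-module structure on $\CKh(L)$ over $\exsltwo$. This proves the first claim of the theorem.

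Finally, to extract the stated formula for $k_3(v_2,v_{-2},m)$, I would expand the restriction-of-scalars formula from Theorem \ref{restriction} in the case $n=3$: the two contributions come from the partitions $(r, i_1) = (1,2)$ and $(r, i_1, i_2) = (2,1,1)$. The second contribution is $k_3(I_1(v_2),I_1(v_{-2}),m)$, which vanishes because the ambient $\exsltwo_{dg}$-module structure is strict. The first contribution reduces to $k_2(I_2(v_2,v_{-2}),m)$, and the computation $I_2(v_2,v_{-2}) = -D$ carried out in Section \ref{sec:algebras} turns this into the action of $D$ on $m$, up to sign. The main obstacle throughout is the diagrammatic identification of the right chain-level operator $D$ on $\CKh(L)$ and the verification of the superalgebra relations involving it; precisely here one sees that $D$ recovers (up to sign, hence exactly over $\ZZ/2\ZZ$) the $k$-preserving part $\partial_0^{Lee}$ of the Lee differential, giving $k_3(v_2,v_{-2},m) = \partial_0^{Lee}(m)$ and the nontriviality assertion.
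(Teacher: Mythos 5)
Your proposal is correct and follows essentially the same route as the paper: realize $\CKh(L)$ as a strict chain-level $\exsltwo_{dg}$-module (with $v_2,v_{-2},d,D$ acting as $\partial_+^{Lee},\partial_-,\partial_0,\partial_0^{Lee}$, all imported from \cite{GLW17}), then restrict scalars along $\exsltwo\hookrightarrow H(\exsltwo_{dg})\to\exsltwo_{dg}$ using the contraction of Section~\ref{sec:algebras} and Theorem~\ref{restriction}, and extract $k_3(v_2,v_{-2},m)=\partial_0^{Lee}(m)$ from $I_2(v_2,v_{-2})=-D$. The only cosmetic difference is that you frame $D$ as an operator to be constructed and then identified with $\partial_0^{Lee}$, whereas the paper (like \cite{GLW17}) takes that identification as the definition and cites \cite{GLW17} for the bracket relations.
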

\begin{proof}
	To start, $\CKh(L)$ is an $L_\infty$-module over $\exsltwo_{dg}$, where the $k_2$ operation is given by the usual module action, and $k_n=0$ for $n\geq 3$.  The module actions of elements of the basis $\{e,f,h,v_2,v_{-2},\tilde{v}_0, d,D,x\}$ are as follows.  The actions of $e,f,h$ were described at the end of section \ref{sec:annularhomology}, and $v_2$, $v_{-2}$, $d$, and $D$ act by $\partial_+^{Lee}$, $\partial_-$, $\partial_0$, and $\partial_0^{Lee}$, respectively; see \cite{GLW17}.  The actions of $\tilde{v}_0$ and $x$ can then be determined by the bracket relations.

	Now, we have a cochain contraction from $\exsltwo_{dg}$ onto its homology, so we can transfer the $\exsltwo_{dg}$-module structure to obtain a new module structure over $H(\exsltwo_{dg})$.  We can then restrict this module structure to the copy of $\exsltwo$ that sits inside of $H(\exsltwo_{dg})$.

	To see that the induced module structure is nontrivial, recall the cochain contraction from Lemma \ref{contraction}.
	$$
		\begin{tikzcd}[row sep = normal, column sep=normal]
			 \exsltwo_{dg} \arrow[loop left, distance=1em, "K"] \arrow[r, shift left=1, "q"] & H(\exsltwo_{dg})  \arrow[l, shift left=1, "i"]
		\end{tikzcd}
	$$
	Examining the restriction of scalars formulas from Theorem \ref{restriction}, we see that
	$$
		k_3'(x_1,x_2,m)=k_{3}(I_{1}(x_1),I_1(x_2),m) - k_2(I_2(x_1,x_2),m) = - k_2(I_2(x_1,x_2),m)
	$$
	for $x_1, x_2 \in H(\exsltwo_{dg})$ and $m\in \CKh(L)$.  Here, $k_n$ is the $L_\infty$-module operation for $\exsltwo_{dg}$, and recall that $k_n=0$ for $n\geq 3$.  Since $I_2(v_{2}, v_{-2}) = -D$ and $I_2(v_{-2}, v_{2}) = -D$, and since $D$ acts by $\partial_0^{Lee}$, we conclude that
	$k'_3(v_{2}, v_{-2}, m) = \partial_0^{Lee}(m)$ and $k'_3(v_{-2}, v_{2}, m) = \partial_0^{Lee}(m)$,
	 showing that we obtain higher operations.
\end{proof}

\section{Reidemeister Moves}
\label{sec:reidemeister}

\subsection{Invariance of the $\exsltwo_{dg}$-module structure}

In this section, we follow Khovanov's original proof that Khovanov homology is invariant under Reidemeister moves; see \cite{K99}.  There, Khovanov constructs quasi-isomorphisms between a given Khovanov complex and the complex obtained after applying a particular Reidemeister move.  Here, we upgrade these quasi-isomorphisms to $\exsltwo_{dg}$ $L_\infty$-module quasi-isomorphisms.

\begin{theorem}
	The $L_\infty$-module structure on $\CKh(L)$ is invariant under Reidemeister I.
\end{theorem}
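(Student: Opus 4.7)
The plan is to combine Khovanov's classical chain homotopy equivalence for Reidemeister I with the transfer machinery of Theorem \ref{thm:transfer} and Lemma \ref{lem:transfermorphism}. Let $L$ and $L'$ be annular links differing by a single R1 move, with $L'$ the diagram without the kink. Following \cite{K99}, there is a strong deformation retraction
\[
\begin{tikzcd}[row sep=normal, column sep=normal]
\CKh(L) \arrow[loop left, distance=1em, "T"] \arrow[r, shift left=1, "q"] & \CKh(L') \arrow[l, shift left=1, "i"]
\end{tikzcd}
\]
where the maps $i$, $q$, and $T$ are defined locally at the site of the kink and act as the identity on the tensor factors coming from the rest of the diagram.

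The first step is to verify that $i$ and $q$ are strict $\exsltwo_{dg}$-module maps, i.e., they intertwine the $k_2$ action. The $\sltwo$-equivariance of the R1 chain maps is essentially contained in \cite{GLW17}, and the odd generators $v_2, v_{-2}, d, D$ act through local merge/split operators, the Lee differential, and the $k$-preserving/decreasing pieces of the Khovanov differential, all of which are supported in exactly the same local manner as Khovanov's R1 maps. Their commutation with $i$ and $q$ therefore reduces to a finite diagrammatic check on the two resolutions of the R1 kink. With this in place, Theorem \ref{thm:transfer} equips $\CKh(L')$ with a transferred $\exsltwo_{dg}$ $L_\infty$-module structure, and Lemma \ref{lem:transfermorphism} promotes $i$ to an $L_\infty$-module homomorphism $I$ with $I_1 = i$. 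Since $i$ is already a chain homotopy equivalence, $I$ is an $L_\infty$-quasi-isomorphism.

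The remaining task is to identify the transferred structure on $\CKh(L')$ with the standard $\exsltwo_{dg}$ $L_\infty$-module structure, in which $k_n = 0$ for all $n \geq 3$. In the transfer formula, every higher operation $k_n'$ is assembled entirely from alternating compositions of $k_2$ and $T$; when $T$ commutes strictly with the $k_2$ action, these compositions telescope to zero. I expect the main obstacle to be verifying the $\exsltwo_{dg}$-equivariance of the homotopy $T$ itself, which is more delicate than the corresponding check for the strict maps $i$ and $q$. However, because $T$ is supported on the local R1 tangle and the $\exsltwo_{dg}$-action respects the tensor factorization of $\CKh$ along the complement of this tangle, the check once again reduces to a direct computation on a short list of local generators. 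Once this is in hand, one concludes that the transferred structure agrees with the standard one, giving the desired $L_\infty$-quasi-isomorphism $\CKh(L) \simeq \CKh(L')$.
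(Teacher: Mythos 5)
Your overall plan---transfer the $\exsltwo_{dg}$ $L_\infty$-module structure across a chain contraction from $\CKh(L)$ to $\CKh(L')$, then argue the transferred structure on $\CKh(L')$ has no higher operations---could be made to work, but the step you flag as the main obstacle is in fact the step where the argument as written breaks down. You propose to verify that the homotopy $T$ is $\exsltwo_{dg}$-equivariant, i.e.\ commutes with the $k_2$-action. This cannot hold: the element $d \in \exsltwo_{dg}$ acts on $\CKh$ precisely by the annular differential $\partial_0$, and a contracting homotopy by definition satisfies $T\partial_0 + \partial_0 T = \Id - i\circ q$, which is nonzero. So $T$ fails to commute with the action of $d$, and any ``local diagrammatic check'' will reveal exactly that failure rather than confirm equivariance. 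The claim that ``once this is in hand, one concludes that the transferred structure agrees with the standard one'' therefore rests on a premise that is false.

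The good news is that the equivariance of $T$ is not actually what you need. If you have already shown that $i$ is a strict $k_2$-module map, then for any $x\in\exsltwo_{dg}$ and $m\in\CKh(L')$ one has $k_2(x,i(m)) = i(k'_2(x,m))$, so every tree appearing in the transferred $k'_n$ for $n\geq 3$ contains the innermost composite $T\circ k_2(\,\cdot\,,i(m)) = T\circ i(\,\cdot\,) = 0$ by the side condition $T\circ i = 0$ in the definition of a chain contraction. So the mechanism for vanishing is the side condition, not $T$-equivariance, and your proof should be restructured around that observation. (You should also verify explicitly that the standard R1 retraction satisfies the side conditions $T^2 = T\circ i = q\circ T = 0$; this is routine but not stated in your write-up.)

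For comparison, the paper takes a different and shorter route: it identifies an acyclic $\exsltwo_{dg}$-subcomplex $\C'\subset\C$ which is literally an $L_\infty$-submodule, so the quotient map $\C\to\C/\C'$ is automatically a strict module map, the quotient has no higher operations because $\C$ has none, and $\C/\C'\cong\RIc$ as modules. This bypasses the transfer theorem and Lemma~\ref{lem:transfermorphism} entirely, and in particular avoids the need to check equivariance of $i$ or any side condition. If you want to keep your transfer-based approach, it is salvageable along the lines above, but you should be aware that it is strictly more work than necessary for~R1.
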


\begin{proof}
	Let $\RIc$ and $\RIa$ denote the annular chain complexes before and after applying an RI move, respectively.  Our goal is to construct a quasi-isomorphism of $L_\infty$-modules $\{h_n\}: \RIa \to \RIc$.  Because the $\exsltwo_{dg}$-module structures on these complexes have no higher operations, it suffices to give a quasi-isomorphism $h_1: \RIa \to \RIc$ that respects the module action, since we can then take $h_n = 0$ for $n\geq 2$.  To this end, let $\C$ be the complex 
	$$\C := \RIa = \RIb \xrightarrow{m} \RIc\{1\}$$ and let $\C'$ be the subcomplex
	$$\C' := \RIb_{w_+}\xrightarrow{m} \RIc\{1\},$$ where $\RIb_{w_+}$ means that the extra circle is labeled $w_+$.  A straightforward check of the actions of the basis elements $\{e,f,h,v_{-2}, v_2, \tilde{v}_0, d,D,x\}$ on $\C'$ shows that $\C'$ is an $sl_2(\wedge)_{dg}$-submodule.  Moreover, $\C'$ is acyclic, since we can write $\C'$ as the mapping cone of the isomorphism $m$.
			\[\C' = \RIb_{w_+}\xrightarrow{m} \RIc\{1\} = 
				\begin{tikzcd}[row sep = tiny, column sep=small]
					\cdots \arrow[r] & \RIb_{w_+} \arrow[r, "d"]\arrow[ddr, "m"] & \RIb_{w_+} \arrow[r] & \cdots \\ 
					 & \oplus & \oplus  &  \\ 
					\cdots \arrow[r] & \RIc\{1\} \arrow[r, "d"] & \RIc\{1\} \arrow[r] & \cdots
				\end{tikzcd}
			\]
	Therefore, the quotient complex $\C/\C'$ is the complex $\RIb/_{w_+=0}\rightarrow 0$, and it is isomorphic to $\RIc$ as chain complexes via the map $z\otimes w_- \mapsto z$.  To summarize, we have constructed a chain map $\RIa \to \RIc$ given by 
	$$y\otimes w_+ + z\otimes w_- + x \mapsto z\otimes w_- \mapsto z$$
	for $y,z \in \C(*0)$ and $x \in \C(*1)$ (we have labeled the crossing formed by the Reidemeister I move last in the chain complex), and this map induces an isomorphisms on homology
	$$H(\RIa) = H(\C) \cong H(\C/\C') \cong H(\RIc)$$

	To complete the proof, we need to check that this composition respects the $\exsltwo_{dg}$ action.  Certainly the first map does, as it is the quotient map of an $\exsltwo_{dg}$-submodule.  For the second map, if $s\in \exsltwo_{dg}$, mapping over and then acting by $s$ gives $z\otimes w_- \mapsto s z$.  On the other hand, acting first by $s$ and then mapping over gives 
			$
				s(z\otimes w_-) = sz\otimes w_- \mapsto sz
			$.
	\end{proof}

\begin{theorem}
	The $L_\infty$-module structure on $\CKh(L)$ is invariant under Reidemeister II.
\end{theorem}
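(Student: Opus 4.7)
The plan is to mirror the strategy used for the RI invariance proof but now applied to the standard Khovanov RII cube. Let $\C := \RIIa$ denote the annular chain complex before applying RII, and let $\RIIb$, $\RIIc$, $\RIId$, $\RIIe$ be the four resolutions of the two new crossings, arranged in a square with edges given by merge/split maps. Because the $\exsltwo_{dg}$-module structure on $\CKh(L)$ has no higher operations, it again suffices to produce a strict chain map $h_1: \C \to \C''$ (where $\C''$ is the complex for two parallel strands) that is a homomorphism of $\exsltwo_{dg}$-modules and induces an isomorphism on homology; we then set $h_n = 0$ for $n\geq 2$.

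Following Khovanov's original argument, I would identify an acyclic subcomplex $\C' \subset \C$ consisting of those generators in the $01$ corner whose extra small circle is labeled $w_+$, together with the $00$ corner (the arrows being determined by the differential); the mapping cone picture realizes $\C'$ as the cone on an isomorphism given by a merge map, so $\C'$ is acyclic. The first step is to verify that $\C'$ is in fact closed under the action of each basis element $\{e,f,h,v_2,v_{-2},\tilde v_0, d, D, x\}$ of $\exsltwo_{dg}$. This is a direct check using the module-action formulas and the fact that the labels $w_\pm$ correspond precisely to the generators that control where each action can land. Once $\C'$ is recognized as an $\exsltwo_{dg}$-submodule, the quotient $\C/\C'$ inherits a module structure, and a dual argument identifies an acyclic quotient (or equivalently an acyclic subcomplex of $\C/\C'$) so that the remaining quotient $(\C/\C')/\C''$ is isomorphic, as chain complexes, to the RII-removed complex via an explicit map on the surviving corner (analogous to the map $z\otimes w_- \mapsto z$ from the RI case).

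The resulting composition $\C \twoheadrightarrow \C/\C' \twoheadrightarrow (\C/\C')/\C''$ is then a chain map inducing isomorphisms on homology at each step. To promote this to an $\exsltwo_{dg}$-module quasi-isomorphism $h_1$, I would verify that both quotient maps intertwine the module action. The first map does so automatically as the quotient by a submodule. For the second quotient map, the argument is identical in spirit to the RI case: acting by $s\in \exsltwo_{dg}$ before or after quotienting gives the same answer because the module action preserves the relevant labels on the $w_\pm$-factor. Setting $h_n=0$ for $n\geq 2$ then yields an $L_\infty$-module morphism whose linear term is a quasi-isomorphism.

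The main obstacle is the bookkeeping in step one: carefully verifying that the candidate acyclic piece $\C'$ is genuinely closed under the action of every basis element of $\exsltwo_{dg}$, particularly $d$ and $D$, which act by the non-annular differential $\partial_0$ and the Lee differential $\partial_0^{Lee}$, respectively. The action of $\tilde v_0$ and $x$ is then determined by bracket relations, but these must be checked to preserve the submodule. Once that combinatorial check is completed, the rest of the proof is a direct transcription of the RI argument. Because both quotient maps are strict module maps, the proof does not require constructing nontrivial higher $h_n$, and the invariance statement follows from the induced isomorphism on homology together with the standard fact that a strict module quasi-isomorphism is an $L_\infty$-module quasi-isomorphism.
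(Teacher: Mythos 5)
Your outline mirrors the paper's decomposition only for the first step: the subcomplex $\C'$ (the $01$ corner with small circle labeled $w_+$, cone of an isomorphism) really is an $\exsltwo_{dg}$-submodule, so $\C/\C'$ inherits the action strictly. The proposal breaks down at the second step. You assume the second acyclic piece $\C'''$ (Bar-Natan's ``transvection'' subcomplex $\{\beta = \tau_1\beta\}$, where $\tau = d_{\star 0}\circ \Delta^{-1}$) is also a submodule, so that both quotient maps are strict module homomorphisms and you may set $h_n=0$ for $n\geq 2$. This is false, and the paper explicitly notes it: ``Since $\C'''$ is not an $L_\infty$-submodule, we do not immediately have an $L_\infty$-module structure on $(\C/\C')/\C'''.$'' The culprit is the $\Delta^{-1}$ in the definition of $\C'''$; the module actions by $v_2,v_{-2},\tilde v_0,d,D,x$ do not commute with $\Delta^{-1}$, so the transvection sub is not preserved. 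Consequently, the bottom quotient map is \emph{not} a strict module map, and the one-sentence justification ``the module action preserves the relevant labels on the $w_\pm$-factor'' does not apply.

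What the paper actually does is construct two explicit chain contractions ($\C \leftrightarrow \C/\C'$ via $i,K$ and $\C/\C' \leftrightarrow (\C/\C')/\C'''$ via $j,H$) and uses Theorem~\ref{thm:transfer} to transfer the $L_\infty$-module structure through them. This leaves two things your proposal still has to discharge: (1) that the transferred structure on $(\C/\C')/\C'''$ happens to have no higher operations, which the paper shows by observing that $H$ is supported only on the top-left vertex while the image of $j$ lives in the bottom-right, so the chains in the transfer formula never traverse $H$; and (2) that the final identification $f:(\C/\C')/\C''' \to \RIIe$ intertwines the induced $k_2$, which is a computation (the correction term $s\cdot m^{-1}\partial_{\C}\gamma$ is labeled $w_+$ and dies under $p\circ q$). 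Without the contraction-and-transfer step, and without those two verifications, there is a genuine gap between ``both quotients are chain quasi-isomorphisms'' and ``the composition is an $L_\infty$-module quasi-isomorphism.'' You could repair the argument either by following the paper's transfer approach, or by finding an alternative acyclic intermediate piece that \emph{is} a submodule (if one exists); as written, the proposal asserts the conclusion without supplying either.
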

\begin{proof} There is a more direct way to prove RII invariance, but the method that follows will be useful in proving RIII invariance.  Consider the diagrams in Figure \ref{RII-2}.

\begin{figure}[H]
\def\entry#1{{\parbox{1in}{\centering $#1$}}}
\begin{eqnarray*}
 \begin{array}{c}\xymatrix{
    \entry{\RIIb\{1\}} \ar[r]^m
      \ar@{}[rd]|{\displaystyle \C}
    & \entry{\RIIc\{2\}} \\
    \entry{\RIId} \ar[u]^\Delta \ar[r]
    & \entry{\RIIe\{1\}} \ar[u]
  }\end{array}
  & \supset & 
  \begin{array}{c}\xymatrix{
    \entry{\RIIb_{w_+}\{1\}} \ar[r]^m
      \ar@{}[rd]|{\displaystyle \C'}
    & \entry{\RIIc\{2\}} \\
    \entry{0} \ar[u] \ar[r]
    & \entry{0} \ar[u]
  }\end{array}
  \end{eqnarray*}
  
\end{figure}

\begin{figure}[H]
\def\entry#1{{\parbox{1in}{\centering $#1$}}}
\begin{eqnarray*}
  \begin{array}{c}\xymatrix{
    \entry{\RIIb\{1\}_{/ w_+=0}} \ar[r] & \entry{0} \\
     \entry{\RIId} \ar[u]^\Delta \ar[r]^{d_{\star 0}} & \entry{\RIIe\{1\}} \ar[u]
  }\\\C/\C'
  \end{array} 
  & \supset & 
  \begin{array}{c}
    \xymatrix{
      \entry{\beta} \ar[r] \ar[rd]^{\tau=d_{\star 0}\Delta^{-1}}
      & \entry{0} \\
      \entry{\alpha} \ar[u]^\Delta \ar[r]^{d_{\star 0}}
      & \entry{\tau\beta} \ar[u]
    } \\ \C'''
  \end{array}
\end{eqnarray*}
\end{figure}

\begin{figure}[H]
\def\entry#1{{\parbox{1in}{\centering $#1$}}}
\begin{eqnarray*}
  \begin{array}{c}
    \xymatrix{
      \entry{\beta} \ar[r]
        \ar[rd]^{\beta=\tau\beta}
      & \entry{0} \\
      \entry{0} \ar[u] \ar[r]
      & \entry{\gamma} \ar[u]
    } \\
    (\C/\C')/\C'''
  \end{array}
\end{eqnarray*}
\caption{The relevant complexes in the proof of RII invariance.  A similar diagram appears in \cite{BN02}.}
  \label{RII-2}
\end{figure}

As complexes, the composition
$$\RIIa = \C \xrightarrow{q} \C/\C' \xrightarrow{p} (\C/\C')/\C''' \xrightarrow{f} \RIIe$$
is a chain of quasi-isomorphisms; see \cite{BN02}.  Our goal is to show that these complexes are actually quasi-isomorphic as $L_\infty$-modules.  Since $\C'''$ is not an $L_\infty$-submodule, we do not immediately have an $L_\infty$-module structure on $(\C/\C')/\C'''$.  Our strategy then will be to give chain contractions from $\C$ to $\C/\C'$ and from $\C/\C'$ to $(\C/\C')/\C'''$ in order to equip these quotients with $L_\infty$-module structures.  Doing so will give us our desired $L_\infty$-module quasi-isomomorphisms.  To this end, define $i: \C/\C' \to \C$ to be the map
	$$i(z) = \begin{cases}
		z-m^{-1}\partial_{\C}(z) & \mbox{if } \mbox{$z$ is in the top left} \\
		0 & \mbox{if } \mbox{$z$ is in the top right}\\
		z & \mbox{if } \mbox{$z$ is in the bottom left}\\
		z-m^{-1}\partial_{\C}(z) & \mbox{if } \mbox{$z$ is in the bottom right}
	\end{cases}$$ 
	where the map $m^{-1}: \C \to \C$ is zero except on the top right vertex.  There, it will be the inverse to the isomorphism that merges a circle with the small circle labeled $w_+$.  
\begin{remark}
	The map $i$ above takes an element $z \in \C/\C'$ and views it as an element of $\C$.  The complex $\C$ has a preferred basis of Khovanov generators, and $\C/\C'$ has a preferred basis consisting of basis elements of $\C$ not in $\C'$.  So, before applying $i$, we should apply a map $i_0: \C/\C' \rightarrow \C$ as $\FF_2$ vector spaces, but we will suppress this for brevity.
\end{remark}
	Now, if $K: \C \to \C$ is the map 
	$$K(z) = \begin{cases}
		0 & \mbox{if } \mbox{$z$ is in the top left} \\
		m^{-1}(z) & \mbox{if } \mbox{$z$ is in the top right}\\
		0 & \mbox{if } \mbox{$z$ is in the bottom left}\\
		0 & \mbox{if } \mbox{$z$ is in the bottom right}
	\end{cases}$$
	 the data
	\[
		\begin{tikzcd}[row sep = normal, column sep=normal]
			 \C \arrow[loop left, distance=1em, "K"] \arrow[r, shift left=1, "q"] & \C/\C'  \arrow[l, shift left=1, "i"]
		\end{tikzcd}
	\]
	satisfies the requirements of a chain contraction, which we can use to transfer the $L_\infty$-module structure from $\C$ to $\C/\C'$.  In particular, since $i$ was a quasi-isomorphism of chain complexes, we obtain a quasi-isomorphism of $L_\infty$-modules $I_n: \C/\C' \to \C$, where $\C/\C'$ has the induced $L_\infty$-module structure.  In fact, there are no higher operations on $\C/\C'$.  After examining the formula for the induced operation, this follows from the fact that $\C$ itself has no higher operations, that the image of $K$ is in $\C'$, and that $\C'$ is an $L_\infty$-submodule of $\C$.

	Next, since every nonzero element of $(\C/\C')/\C'''$ is equivalent to some element $\gamma$ in the bottom right, we can define $j(\gamma) := \gamma$, thought of as an element of $\C/\C'$. Then if $H: \C/\C' \to \C/\C'$ is the map 
	$$H(z) = \begin{cases}
		\Delta^{-1}(z) & \mbox{if } \mbox{$z$ is in the top left} \\
		0 & \mbox{if } \mbox{$z$ is in the top right}\\
		0 & \mbox{if } \mbox{$z$ is in the bottom left}\\
		0 & \mbox{if } \mbox{$z$ is in the bottom right}
	\end{cases}$$
	 the data
	\[
		\begin{tikzcd}[row sep = normal, column sep=normal]
			 \C/\C' \arrow[loop left, distance=1em, "H"] \arrow[r, shift left=1, "p"] & (\C/\C')/\C'''  \arrow[l, shift left=1, "j"]
		\end{tikzcd}
	\]
	also satisfies the requirements of a chain contraction.  We obtain a quasi-isomorphism of $L_\infty$-modules $J_n: \C/\C' \to \C$, where $(\C/\C')/\C'''$ has the induced $L_\infty$-module structure from $\C/\C'$.  There are no higher operations on $(\C/\C')/\C'''$ as well.  To see this, note that because $\C/\C'$ has no higher operations, the induced module operation on $(\C/\C')/\C'''$ is of the form
	$k_n'= \sum_{\substack{\tau \in S(1, \ldots, 1) \\ i_1= \cdots = i_{n-1} = 1}} q \circ A_{n-1} \circ (\tau^\bullet \otimes i)$; see Figure \ref{fig:induced}.

	\begin{figure}[h]

		\centering

		\scalebox{1}{\input{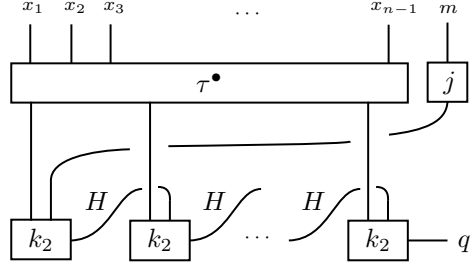}}

		\caption{The transferred bracket on $(\C/\C')/\C'''$.  Here, the labeled edges represent the application of that particular map.  For example, $k_3'(x_1,x_2,m) = q\circ k_2(x_1,H\circ k_2(x_2,j(m))) + q\circ k_2(x_2,H\circ k_2(x_1,j(m)))$}.
		\label{fig:induced}
	\end{figure}

	Since the image of $j$ is concentrated in the bottom right corner of $\C/\C'$, and $H$ is zero everywhere except the top-left, it follows that all higher operations vanish.  As for the module operation $k_2$, an element $s \in \exsltwo_{dg}$ acts on $\gamma \in (\C/\C')/\C'''$ by
	\begin{align*}
		s\cdot \gamma &= p(s\cdot j(\gamma)) = p(q(s\cdot (i\circ j(\gamma)))) = (p\circ q)(s\cdot \gamma-s\cdot m^{-1}\partial_{\C}(\gamma))
	\end{align*}
	That is, we consider the difference $s\cdot \gamma-s\cdot m^{-1}\partial_{\C}(\gamma)$ as an element of $\C$, and then quotient twice.  It remains to show that the degree shift map $f: (\C/\C')/\C''' \to \RIIe$ respects this action, that is, $f(s\cdot \gamma) = s\cdot f(\gamma)$ for $s\in \exsltwo_{dg}$ and $\gamma \in (\C/\C')/\C'''$.  We compute that
	\begin{align*}
	 	f(s \cdot \gamma) - s\cdot f(\gamma) &= f((p\circ q)(s\cdot \gamma)- (p \circ q)(s\cdot m^{-1}\partial_\C(\gamma)))-s\cdot \gamma \\ 
	 	&= -(p\circ q)(s\cdot m^{-1}(\partial_\C \gamma))
	 \end{align*} 
	 Using the fact that any term $m^{-1}(\partial_\C\gamma)$ will be labeled by $w_+$, the action of any $s\in \exsltwo_{dg}$ on this term will quotient to 0 under $p\circ q$.  In particular, we have shown that the composition
			$$\RIIa = \C \xrightarrow{q} \C/\C' \xrightarrow{p} \C'' \xrightarrow{f} \RIIe$$
	 is a chain of $L_\infty$-quasi-isomorphisms, since $\RIIe$ has no higher operations.
\end{proof}

\begin{theorem}
	The $L_\infty$-module structure on $\CKh(L)$ is invariant under Reidemeister III.
\end{theorem}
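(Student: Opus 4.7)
The plan is to follow the classical strategy used by Khovanov and Bar-Natan for Reidemeister III invariance, and to upgrade each step of that strategy to an $L_\infty$-module statement using the same cancellation-plus-transfer technique developed in the preceding proof of RII invariance. The basic observation is that both sides of an RIII move contain an RII-like sub-configuration, so that each of the two cubes of resolutions can be reduced through the quotient-by-an-acyclic-subcomplex procedure to the same explicit simpler complex. Composing the two resulting chains of $L_\infty$-quasi-isomorphisms will give the desired $L_\infty$-module equivalence between $\CKh$ of the two RIII diagrams.

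Concretely, let $\C_L$ and $\C_R$ denote the annular Khovanov complexes of the two RIII diagrams. First I would identify, inside each $\C_L$ and $\C_R$, an acyclic $\exsltwo_{dg}$-submodule $\C'_L$ and $\C'_R$ of the same local form as the subcomplex $\C'$ appearing in the RII proof: it is generated by those states of the cube whose small trivial circle is forced to be labeled $w_+$, together with the invertible merge map $m$ that absorbs it. The verification that this is an $\exsltwo_{dg}$-submodule is purely local and is identical to the RII case, since $\exsltwo_{dg}$ acts by the annular saddle and Lee maps, all of which respect the $w_+$-labeled small circle. I can then define chain contractions $(\C_L,\C_L/\C'_L,i_L,q_L,K_L)$ and $(\C_R,\C_R/\C'_R,i_R,q_R,K_R)$ by the same formulas $i(z)=z-m^{-1}\partial_{\C}(z)$ and $K=m^{-1}$ as in the RII proof, and by Theorem~\ref{thm:transfer} and Lemma~\ref{lem:transfermorphism} these transfer the $\exsltwo_{dg}$ $L_\infty$-module structure to the quotients, with the inclusions $i_L,i_R$ extending to $L_\infty$-module quasi-isomorphisms.

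The next step is to identify the two reduced complexes $\C_L/\C'_L$ and $\C_R/\C'_R$ with one another. Khovanov's original argument produces an explicit chain-level isomorphism $\Phi$ between them, given by matching corresponding resolutions at the three RIII crossings. I would verify that $\Phi$ is $\exsltwo_{dg}$-equivariant on $k_2$, which reduces to a local check on the circles in each matched pair of resolutions, and then show that $\Phi$ intertwines the transferred higher operations. The latter follows because the transfer formula of Theorem~\ref{thm:transfer} builds $k_n'$ from iterated compositions of the $\exsltwo_{dg}$-action with the homotopy $K$, and both pieces have the same local form on $\C_L/\C'_L$ and $\C_R/\C'_R$ under $\Phi$. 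Concatenating the three $L_\infty$-quasi-isomorphisms $\C_L \simeq \C_L/\C'_L \simeq \C_R/\C'_R \simeq \C_R$ then yields the theorem.

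The main obstacle I expect is the last verification: namely, that $\Phi$ truly matches the induced higher $L_\infty$-operations, not merely the underlying chain complexes. Because the transferred $k_3$ on both sides absorbs $\partial_0^{Lee}$ via the mechanism of Theorem~\ref{thm:wedgestructure}, one must carefully track the Lee-differential contributions produced by compositions of the form $q\circ k_2\circ (K\circ k_2)\otimes \mathrm{Id}$ across the $\Phi$-identification. The bookkeeping for this is precisely what the diagrammatic $\RIIIcase$ template in the preamble is designed to organize, and working over $\ZZ/2\ZZ$ (as specified in the introductory remark) removes the sign tracking that would otherwise compound the combinatorics. With these simplifications the argument should reduce to a finite case check over the vertices of the RIII cube.
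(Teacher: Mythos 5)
Your proposal captures the general Bar-Natan-style plan and correctly identifies the ``acyclic $w_+$-subcomplex'' as the starting point, but it collapses the reduction into a single quotient step and that leaves a real gap. After quotienting $\C$ by the $L_\infty$-submodule $\C'$ (and $\D$ by $\D'$), the two reduced cubes $\C/\C'$ and $\D/\D'$ are \emph{not} isomorphic: in $\C/\C'$ the $w_+=0$ relation lives at vertex $101$, while in $\D/\D'$ it lives at vertex $110$, and vertices $100$ and $111$ are still nontrivial. There is no chain isomorphism ``$\Phi$'' between them at this stage. The paper (following Bar-Natan) must therefore perform a second reduction $(\C/\C')/\C''$ and $(\D/\D')/\D''$, identifying vertices $101$ and $110$ via the $\Delta^{-1}$-relation $\beta=\tau\beta$; this $\C''$ is \emph{not} an $L_\infty$-submodule, so the structure on the second quotient must be obtained via a genuine chain contraction with homotopy $T=\Delta^{-1}$. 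Your proposal never produces this intermediate object, so the isomorphism you invoke does not exist on the complexes you have written down.

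The second issue is more subtle but equally important. You plan to show ``$\Phi$ intertwines the transferred higher operations'' and expect to track Lee-differential contributions in compositions $q\circ k_2\circ(K\circ k_2)\otimes\Id$ across the identification. The paper takes a different and substantially easier tack: it first proves (Step 4 of its proof, a lengthy case analysis over the circle configurations $a=b=c$, $a=b\neq c$, etc.) that \emph{all} higher operations $k'_n$, $n\geq 3$, \emph{vanish} on the doubly quotiented cubes $\Ciii$ and $\Diii$. This reduces the final verification to showing that the bijection $f:\Ciii\to\Diii$ respects only the ordinary $k_2$-action (plus $k_1$), which is handled by a separate finite case check (Step 5). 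Without establishing the vanishing first, the verification you propose would require comparing infinitely many compositions involving both $T$ and the $\exsltwo_{dg}$-action, and the claim that ``both pieces have the same local form under $\Phi$'' is not a proof of that. So the right order is: (i) two-step reduction on each side, (ii) show the doubly-reduced objects have no higher structure, (iii) check equivariance of the identification map only at the $k_2$ level.
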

\begin{proof}

	\textbf{Step 1: Overview.} For RIII, the situation is summarized in Figure \ref{RIII}.  We start by decomposing the complexes $\llbracket \raisebox{-.2em}{\resizebox{.15in}{!}{}}\rrbracket$ and $\llbracket \raisebox{-.2em}{\resizebox{.15in}{!}{}}\rrbracket$ into $\C$ and $\D$ (these are the top left and top right cubes in Figure \ref{RIII}, respectively).  We will then transfer the $L_\infty$-module structures by quasi-isomorphisms $q_2 \circ q_1$ and $p_2 \circ p_1$ to the quotient complexes $(\C/\C')/C''$ and $(\D/\D')/D''$ (the bottom row) and show that these quotients are $L_\infty$-quasi-isomorphic via an $L_\infty$-module map $f$.

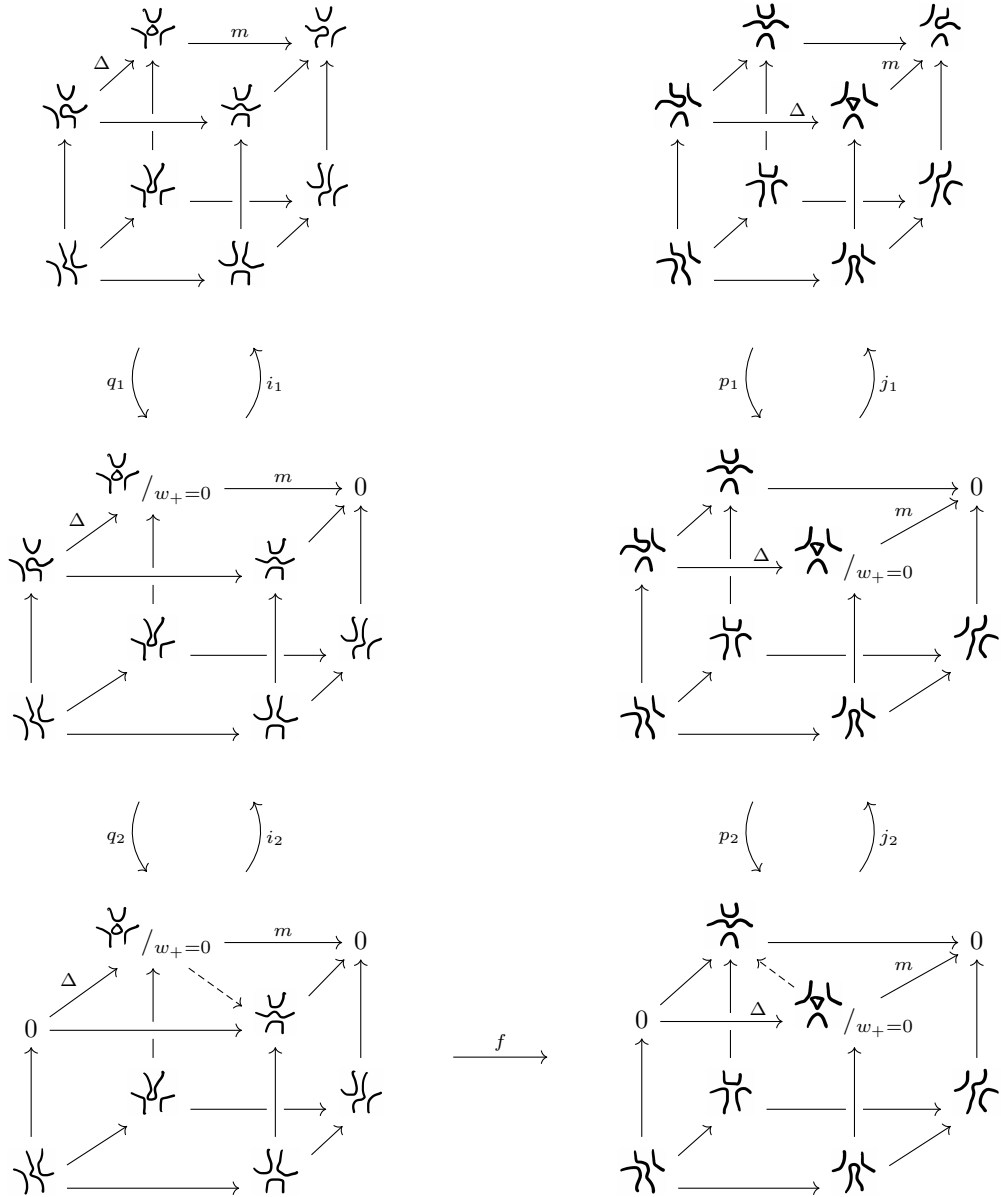
\begin{figure}[H]
\begin{tabular}{ccc}
		\begin{tikzcd}[row sep = tiny, column sep=tiny]
			&	|[alias=101]| \RIIILioi  &  & |[alias=111]| \RIIILiii  	  \\ 
				|[alias=100]| \RIIILioo  &  & |[alias=110]| \RIIILiio &   \\ 
			& 	|[alias=001]| \RIIILooi  &  & |[alias=011]| \RIIILoii 	  \\ 
				|[alias=000]| \RIIILooo  &  & |[alias=010]| \RIIILoio &   \\ 
			\arrow[from=000, to=001]
			\arrow[from=000, to=010]
			\arrow[from=000, to=100]
			\arrow[from=001, to=101]
			\arrow[from=001, to=011]
			\arrow[from=010, to=011]
			\arrow[from=010, to=110, crossing over]
			\arrow[from=011, to=111]
			\arrow[from=100, to=101, "\Delta"]
			\arrow[from=100, to=110, crossing over]
			\arrow[from=101, to=111, "m"]
			\arrow[from=110, to=111]
		\end{tikzcd}

		& \qquad \qquad  &

		\begin{tikzcd}[row sep = tiny, column sep=tiny]
			&	|[alias=101]| \RIIIRioi  &  & |[alias=111]| \RIIIRiii  	  \\ 
				|[alias=100]| \RIIIRioo  &  & |[alias=110]| \RIIIRiio &   \\ 
			& 	|[alias=001]| \RIIIRooi  &  & |[alias=011]| \RIIIRoii 	  \\ 
				|[alias=000]| \RIIIRooo  &  & |[alias=010]| \RIIIRoio &   \\ 
			\arrow[from=000, to=001]
			\arrow[from=000, to=010]
			\arrow[from=000, to=100]
			\arrow[from=001, to=101]
			\arrow[from=001, to=011]
			\arrow[from=010, to=011]
			\arrow[from=010, to=110, crossing over]
			\arrow[from=011, to=111]
			\arrow[from=100, to=101]
			\arrow[from=100, to=110, "\Delta", pos=.8, crossing over]
			\arrow[from=101, to=111]
			\arrow[from=110, to=111, "m"]
		\end{tikzcd} \\

		\begin{tikzcd}[row sep = large, column sep=normal]
			|[alias=A]| \  & |[alias=B]| \    \\ 
			|[alias=C]| \  & |[alias=D]| \   
			\arrow[from=A, to=C, "q_1" left, bend right]
			\arrow[from=D, to=B, "i_1" right, bend right]
		\end{tikzcd} & & \begin{tikzcd}[row sep = large, column sep=normal]
			|[alias=A]| \  & |[alias=B]| \    \\ 
			|[alias=C]| \  & |[alias=D]| \   
			\arrow[from=A, to=C, "p_1" left, bend right]
			\arrow[from=D, to=B, "j_1" right, bend right]
		\end{tikzcd}

		\\
		\begin{tikzcd}[row sep = tiny, column sep=tiny]
			&	|[alias=101]| \RIIILioi /_{w_+=0}  &  & |[alias=111]| 0  	  \\ 
				|[alias=100]| \RIIILioo  &  & |[alias=110]| \RIIILiio &   \\ 
			& 	|[alias=001]| \RIIILooi  &  & |[alias=011]| \RIIILoii 	  \\ 
				|[alias=000]| \RIIILooo  &  & |[alias=010]| \RIIILoio &   \\ 
			\arrow[from=000, to=001]
			\arrow[from=000, to=010]
			\arrow[from=000, to=100]
			\arrow[from=001, to=101]
			\arrow[from=001, to=011]
			\arrow[from=010, to=011]
			\arrow[from=010, to=110, crossing over]
			\arrow[from=011, to=111]
			\arrow[from=100, to=101, "\Delta"]
			\arrow[from=100, to=110, crossing over]
			\arrow[from=101, to=111, "m"]
			\arrow[from=110, to=111]
		\end{tikzcd}

		& &

		\begin{tikzcd}[row sep = tiny, column sep=tiny]
			&	|[alias=101]| \RIIIRioi  &  & |[alias=111]| 0  	  \\ 
				|[alias=100]| \RIIIRioo  &  & |[alias=110]| \RIIIRiio /_{w_+=0} &   \\ 
			& 	|[alias=001]| \RIIIRooi  &  & |[alias=011]| \RIIIRoii 	  \\ 
				|[alias=000]| \RIIIRooo  &  & |[alias=010]| \RIIIRoio &   \\ 
			\arrow[from=000, to=001]
			\arrow[from=000, to=010]
			\arrow[from=000, to=100]
			\arrow[from=001, to=101]
			\arrow[from=001, to=011]
			\arrow[from=010, to=011]
			\arrow[from=010, to=110, crossing over]
			\arrow[from=011, to=111]
			\arrow[from=100, to=101]
			\arrow[from=100, to=110, "\Delta", pos=.8, crossing over]
			\arrow[from=101, to=111]
			\arrow[from=110, to=111, "m", start anchor={[xshift=-4ex]}, start anchor={[yshift=-2ex]}]
		\end{tikzcd} \\

		\begin{tikzcd}[row sep = large, column sep=normal]
			|[alias=A]| \  & |[alias=B]| \    \\ 
			|[alias=C]| \  & |[alias=D]| \   
			\arrow[from=A, to=C, "q_2" left, bend right]
			\arrow[from=D, to=B, "i_2" right, bend right]
		\end{tikzcd} & & \begin{tikzcd}[row sep = large, column sep=normal]
			|[alias=A]| \  & |[alias=B]| \    \\ 
			|[alias=C]| \  & |[alias=D]| \   
			\arrow[from=A, to=C, "p_2" left, bend right]
			\arrow[from=D, to=B, "j_2" right, bend right]
		\end{tikzcd}\\

		\begin{tikzcd}[row sep = tiny, column sep=tiny]
			&	|[alias=101]| \RIIILioi/_{w_+=0}  &  & |[alias=111]| 0  	  \\ 
				|[alias=100]| 0  &  & |[alias=110]| \RIIILiio &   \\ 
			& 	|[alias=001]| \RIIILooi  &  & |[alias=011]| \RIIILoii 	  \\ 
				|[alias=000]| \RIIILooo  &  & |[alias=010]| \RIIILoio &   \\ 
			\arrow[from=000, to=001]
			\arrow[from=000, to=010]
			\arrow[from=000, to=100]
			\arrow[from=001, to=101]
			\arrow[from=001, to=011]
			\arrow[from=010, to=011]
			\arrow[from=010, to=110, crossing over]
			\arrow[from=011, to=111]
			\arrow[from=100, to=101, "\Delta"]
			\arrow[from=100, to=110, crossing over]
			\arrow[from=101, to=111, "m"]
			\arrow[from=110, to=111]
			\arrow[from=101, to=110, dashed]
		\end{tikzcd}

		& \begin{tikzcd}[row sep = tiny, column sep=large]
			|[alias=A]| \  & |[alias=B]| \
			\arrow[from=A, to=B, "f"]
		\end{tikzcd} &  

		\begin{tikzcd}[row sep = tiny, column sep=tiny]
			&	|[alias=101]| \RIIIRioi  &  & |[alias=111]| 0  	  \\ 
				|[alias=100]| 0  &  & |[alias=110]| \RIIIRiio/_{w_+=0}  &   \\ 
			& 	|[alias=001]| \RIIIRooi  &  & |[alias=011]| \RIIIRoii 	  \\ 
				|[alias=000]| \RIIIRooo  &  & |[alias=010]| \RIIIRoio &   \\ 
			\arrow[from=000, to=001]
			\arrow[from=000, to=010]
			\arrow[from=000, to=100]
			\arrow[from=001, to=101]
			\arrow[from=001, to=011]
			\arrow[from=010, to=011]
			\arrow[from=010, to=110, crossing over]
			\arrow[from=011, to=111]
			\arrow[from=100, to=101]
			\arrow[from=100, to=110, "\Delta" pos=0.8, crossing over]
			\arrow[from=101, to=111]
			\arrow[from=110, to=111, "m", start anchor={[xshift=-4ex]}, start anchor={[yshift=-2ex]}]
			\arrow[from=110, to=101, dashed, start anchor={[xshift=1ex]}, start anchor={[yshift=-1ex]}]
		\end{tikzcd} \\

\end{tabular}
\caption{The complexes involved in RIII invariance.  We have suppressed the degree shifts.}
\label{RIII}
\end{figure}

 \textbf{Step 2: The structure on $\C/\C'$ and $\D/\D'$}. Analagous to the RII case, we have subcomplexes $C' \subset C$ and $D' \subset D$, which are $L_\infty$-submodules; see Figure \ref{RIIIsub}.  Because $C'$ and $D'$ are submodules, the quotients $\C/\C'$ and $\D/\D'$ have no higher operations as $L_\infty$-modules.  Alternatively, this quotient structure agrees with the one obtained by using cochain contractions 
 \[
		\begin{tikzcd}[row sep = normal, column sep=normal]
			 \C \arrow[loop left, distance=1em, "H"] \arrow[r, shift left=1, "q_1"] & \C/\C' \arrow[l, shift left=1, "i_1"]
		\end{tikzcd}
	\qquad \mbox{ and } \qquad 
		\begin{tikzcd}[row sep = normal, column sep=normal]
			 \D \arrow[loop left, distance=1em, "K"] \arrow[r, shift left=1, "p_1"] & \D/\D'  \arrow[l, shift left=1, "j_1"]
		\end{tikzcd}
	\]
	to transfer the structure.  Here, the maps $i_1$ and $i_2$ are 
 $$i_1(z) = \begin{cases}
		z, & \mbox{if } z\in 000,001,010,100 \\
		z-m^{-1}(\partial_{\C} z), & \mbox{if } z\in 011,101,110 \\
		0, & \mbox{if } z\in 111  \\
	\end{cases}$$
	and 
$$j_1(z) = \begin{cases}
		z, & \mbox{if } z\in 000,001,010,100 \\
		z-m^{-1}(\partial_{\D} z), & \mbox{if } z\in 011,101,110 \\
		0, & \mbox{if } z\in 111  \\
	\end{cases}$$
	The coordinates above refer to different corners of the cubes, i.e.
	\[
	\begin{tikzcd}[row sep = tiny, column sep=tiny]
			&	|[alias=101]| (101)  &  & |[alias=111]| (111)  	  \\ 
				|[alias=100]| (100)  &  & |[alias=110]| (110) &   \\ 
			& 	|[alias=001]| (001)  &  & |[alias=011]| (011) 	  \\ 
				|[alias=000]| (000)  &  & |[alias=010]| (010) &   \\ 
			\arrow[from=000, to=001]
			\arrow[from=000, to=010]
			\arrow[from=000, to=100]
			\arrow[from=001, to=101]
			\arrow[from=001, to=011]
			\arrow[from=010, to=011]
			\arrow[from=010, to=110, crossing over]
			\arrow[from=011, to=111]
			\arrow[from=100, to=101]
			\arrow[from=100, to=110, crossing over]
			\arrow[from=101, to=111]
			\arrow[from=110, to=111]
	\end{tikzcd}\]

\begin{figure}
\begin{tabular}{ccc}
		\begin{tikzcd}[row sep = tiny, column sep=small]
			&	|[alias=101]| \RIIILioi_{w_+}  &  & |[alias=111]| \RIIILiii  	  \\ 
				|[alias=100]| 0  &  & |[alias=110]| 0 &   \\ 
			& 	|[alias=001]| 0  &  & |[alias=011]| 0 	  \\ 
				|[alias=000]| 0  &  & |[alias=010]| 0 &   \\ 
			\arrow[from=000, to=001]
			\arrow[from=000, to=010]
			\arrow[from=000, to=100]
			\arrow[from=001, to=101]
			\arrow[from=001, to=011]
			\arrow[from=010, to=011]
			\arrow[from=010, to=110, crossing over]
			\arrow[from=011, to=111]
			\arrow[from=100, to=101]
			\arrow[from=100, to=110, crossing over]
			\arrow[from=101, to=111, "m"]
			\arrow[from=110, to=111]
		\end{tikzcd}

		& \qquad \qquad  &

		\begin{tikzcd}[row sep = tiny, column sep=small]
			&	|[alias=101]| 0  &  & |[alias=111]| \RIIIRiii  	  \\ 
				|[alias=100]| 0  &  & |[alias=110]| \RIIIRiio_{w_+} &   \\ 
			& 	|[alias=001]| 0  &  & |[alias=011]| 0 	  \\ 
				|[alias=000]| 0  &  & |[alias=010]| 0 &   \\ 
			\arrow[from=000, to=001]
			\arrow[from=000, to=010]
			\arrow[from=000, to=100]
			\arrow[from=001, to=101]
			\arrow[from=001, to=011]
			\arrow[from=010, to=011]
			\arrow[from=010, to=110, crossing over]
			\arrow[from=011, to=111]
			\arrow[from=100, to=101]
			\arrow[from=100, to=110, crossing over]
			\arrow[from=101, to=111]
			\arrow[from=110, to=111, "m", start anchor={[xshift=-3ex]}, start anchor={[yshift=-2ex]}]
		\end{tikzcd} \\

\end{tabular}
\caption{The complexes $C'$ and $D'$.  The $w_+$ means that the trivial circle is labeled $w_+$.}
\label{RIIIsub}
\end{figure}
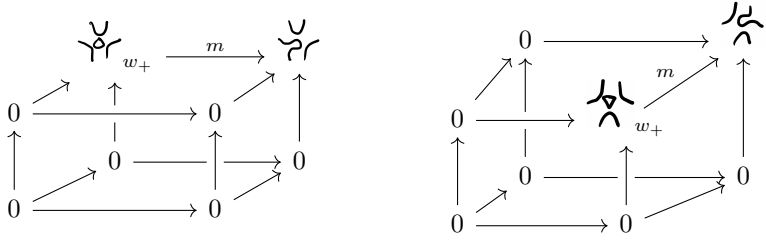

\textbf{Step 3: The structure on $\Ciii$ and $\Diii$}.  To go from $\C/\C'$ to $\Ciii$ and $\D/\D'$ to $\Diii$, we identify elements in vertices 101 and 110 by imposing the relation $\beta_1=\tau_1\beta_1$ in $\Ciii$ and the relation $\beta_2=\tau_2\beta_2$ in $\Diii$, analagous to the RII case.  Similar to before, we are not quotienting by a submodule, so we need to transfer the structure from $\C/\C'$ and $\D/\D'$.  To this end, we define maps $i_2$ and $j_2$.  Let
	$$i_2(z) = \begin{cases}
		z, & \mbox{if } z\in 000,010,011,110 \\
		z-\Delta^{-1}(\partial_{\C/\C'} z), & \mbox{if } z\in 001 \\
		0, & \mbox{if } z\in 100, 111 \\
	\end{cases}$$
	Note that if $z \in 101$, then $z$ is equivalent via $\tau_1$ to some element in 110.  Also, let
	$$
	j_2(z) = \begin{cases}
		z, & \mbox{if } z\in 000,001,011,101 \\
		z-\Delta^{-1}(\partial_{\D/\D'} z), & \mbox{if } z\in 010 \\
		0, & \mbox{if } z\in 100, 111 \\
	\end{cases}
	$$
	where we again note that if $z\in 110$, then $z$ is equivalent via $\tau_2$ to some element in 101.  Then, if we define $T: \C/\C' \to \C/\C'$ and $S: \D/\D' \to \D/\D'$ by 
	$$T(z) = \begin{cases}
		\Delta^{-1}(z), & \mbox{if } z\in 101 \\
		0, & \mbox{otherwise }
	\end{cases}\qquad \mbox{ and } \qquad 
	S(z) = \begin{cases}
		\Delta^{-1}(z), & \mbox{if } z\in 110 \\
		0, & \mbox{otherwise } \\
	\end{cases}
	$$
	both  
	\[
		\begin{tikzcd}[row sep = normal, column sep=normal]
			 \C/\C' \arrow[loop left, distance=1em, "T"] \arrow[r, shift left=1, "q_2"] & (\C/\C')/\C''  \arrow[l, shift left=1, "i_2"]
		\end{tikzcd}
	\qquad \mbox{ and } \qquad 
		\begin{tikzcd}[row sep = normal, column sep=normal]
			 \D/\D' \arrow[loop left, distance=1em, "S"] \arrow[r, shift left=1, "p_2"] & (\D/\D')/\D''  \arrow[l, shift left=1, "j_2"]
		\end{tikzcd}
	\]
	satisfy the requirement of a cochain contraction.  In particular, this allows us to transfer the $L_\infty$-module structures from $\C/\C'$ and $\D/\D'$ to their respective quotient complexes.  

	\textbf{Step 4: $\Ciii$ and $\Diii$ have no higher operations}. The next goal is to show that there is no higher structure on $\Ciii$ or $\Diii$.  We will explain the case of $\Ciii$.  The case of $\Diii$ is analagous.  Indeed, because $\C/\C'$ has no higher $L_\infty$-module operations, the transferred structure looks like 
	$$k_n'(x_1,\ldots,x_{n-1},m)= \sum_{\substack{\tau \in S(1, \ldots, 1) \\ i_1= \cdots = i_{n-1} = 1}} q \circ A_{n-1} \circ (\tau^\bullet \otimes i_2)$$  
	See Figure \ref{fig:induced}.  We will show that $q \circ A_{n-1} \circ (\tau^\bullet \otimes i_2) = 0$ for any $\tau \in S_{n-1}$.  That is, for $n\geq 3$, it suffices to show that $q\circ A_{n-1}(x_1,x_2, \ldots, x_{n-1},i_2(m)) = 0$ for any choice of $x_1,x_2,\ldots,x_{n-1} \in \exsltwo_{dg}$, where $i_1, \ldots, i_{n-1}=1$ in the definition of $A_{n-1}$.  

	\textbf{Step 4.1: The case $n>3$}.  We start with the case $n>3$.  Because $T$ is only nonzero on the vertex $101$, for $q\circ A_{n-1}(x_1,x_2, \ldots, x_{n-1},i_2(m))$ to be nonzero, it must contain a nonzero composition
	$$
	\begin{tikzpicture}
	 	\node at (0,0) (n1) {$\RIIILioi/_{w_+=0}$};
	 	\node at (2.5,0) (n2) {$\RIIILioo$};
	 	\node at (5,0) (n3) {$\RIIILioi/_{w_+=0}$};
	 	\node at (7.5,0) (n4) {$\RIIILioo$};
	 	\scriptsize{
  		\node at (0,-1) (blabel) {101};
  		\node at (2.5,-1) (blabel) {100};
  		\node at (5,-1) (blabel) {101};
  		\node at (7.5,-1) (blabel) {100};
	 	\draw [->] (n1) -- (n2) node[midway,above] {$T$};
	 	\draw [->] (n2) -- (n3) node[midway,above] {$x_j$};
	 	\draw [->] (n3) -- (n4) node[midway,above] {$T$};
	 	}
	\end{tikzpicture}
	$$
	Here, the map $x_j$ represents acting by the element $x_j \in \exsltwo_{dg}$.  We will show that if $x_j$ is any element of the basis $\{e,f,h,v_2, v_{-2}, \tilde{v}_0, d,D,x\}$, then this composition is zero.  Indeed, $x_j$ cannot be $e,f,h$, since it must change the homological degree by one to have nonzero image in vertex 101.  Moreover, modulo the relation $w_+=0$, the actions of the elements $v_2, v_{-2}, \tilde{v}_0$, and $D$ are all the zero map.  Finally, if $x_j=-x=[d,D]$, then the component that lies in the vertex 101 is
	$$D_{101}d_{10*} + D_{10*} d_{100} + d_{101}D_{10*} + d_{10*} D_{100}$$
	where, for example, the notation $D_{101}$ represents the component of $D$ that remains in vertex 101, and $d_{10*}$ represents the component of $d$ obtained by acting along the edge $100 \to 101$.  Now we observe that the middle terms $D_{10*} d_{100}$ and $d_{101}D_{10*}$ are both zero, because the relation $w_+=0$ implies that $D_{10*}$ is the zero map.  Also, the terms $D_{101}d_{10*}$ and $d_{10*} D_{100}$ cancel, because $d_{10*}$ just appends a trivial circle labeled $w_-$ to the resolution in vertex 100.

	Therefore, we have reduced the possible nonzero $q\circ A_{n-1}(x_1,x_2, \ldots, x_{n-1},i_2(m))$ to either the case of $q\circ A_{2}(x_1,x_2,i_2(m))$ or $q\circ A_{n-1}(x_1,x_2, \ldots, x_{n-1},i_2(m))$, where $x_2 = \cdots = x_{n-2} = d$.  

	\textbf{Step 4.2: The case $n=3$}. We now examine the case $n=3$.  From the formula for $q\circ A_{2}(x_1,x_2,i_2(m))$, we need $x_1 \cdot i_2(m)$ to be in vertex 101.  This implies that $m$ is either in the vertex $000$ or the vertex $001$.  If $m \in 000$, then the only possibility for $x_1$ is $x_1 = x$.  But then
	\begin{align*}
		x\cdot m &= -[d,D] \cdot m =\partial_0 \partial_0^{Lee} m + \partial_0^{Lee}\partial_0 m 
	\end{align*}
	Since the boundary map $\partial_{*01}$ is a split map, and $w_+=0$ in vertex 101, $\partial_0^{Lee}=0$ along this edge.  So we only have a term $\partial_0\partial_0^{Lee}m$.  Therefore, we need to focus on the composition 
	\[
		\RIIIcase{\RIIILooo}{\RIIILooi}{\RIIILioi}{\RIIILioo}{\RIIILioi}{\RIIILiio}
	\]
	where $t_1, t_2$ can be either merge or split. Let $a,b,c$ denote the circles to which the three strands in vertex 000 belong; see Figure \ref{circles}.  	
	\begin{figure}
		\centering
		\scalebox{.4}{\input{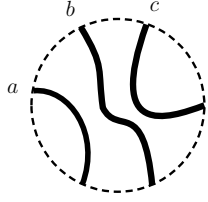}}
		\caption{Each strand in the vertex 000 belongs to a circle.  Denote these circles by $a$, $b$, and $c$.}
		\label{circles}
	\end{figure}
	Then we have four cases: either $a=b=c$, $a=b\neq c$, $a\neq b = c$, or $a\neq b\neq c$.

	\begin{figure}[H]
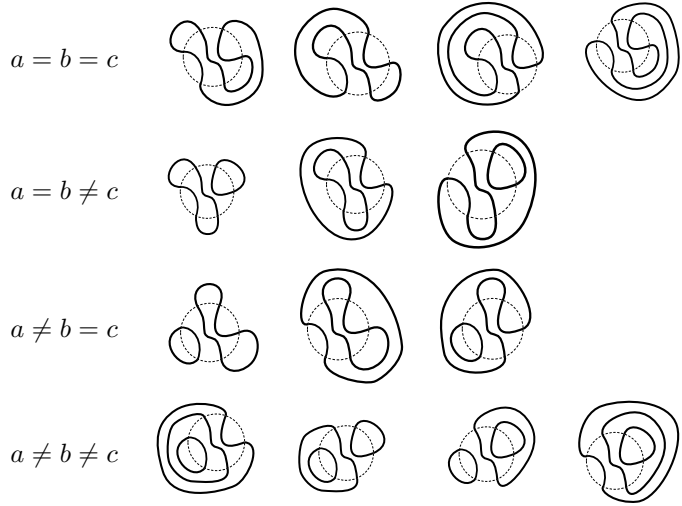

		\centering
			\newcommand{\resizebox{.6in}{!}{\input{pics/circle-cases/abc1}}}{\resizebox{.6in}{!}{\input{pics/circle-cases/abc1}}}
			\newcommand{\resizebox{.6in}{!}{\input{pics/circle-cases/abc2}}}{\resizebox{.6in}{!}{\input{pics/circle-cases/abc2}}}
			\newcommand{\resizebox{.6in}{!}{\input{pics/circle-cases/abc3}}}{\resizebox{.6in}{!}{\input{pics/circle-cases/abc3}}}
			\newcommand{\resizebox{.6in}{!}{\input{pics/circle-cases/abc4}}}{\resizebox{.6in}{!}{\input{pics/circle-cases/abc4}}}

			\newcommand{\resizebox{.6in}{!}{\input{pics/circle-cases/abnc1}}}{\resizebox{.6in}{!}{\input{pics/circle-cases/abnc1}}}
			\newcommand{\resizebox{.6in}{!}{\input{pics/circle-cases/abnc2}}}{\resizebox{.6in}{!}{\input{pics/circle-cases/abnc2}}}
			\newcommand{\resizebox{.6in}{!}{\input{pics/circle-cases/abnc3}}}{\resizebox{.6in}{!}{\input{pics/circle-cases/abnc3}}}

			\newcommand{\resizebox{.6in}{!}{\input{pics/circle-cases/anbc1}}}{\resizebox{.6in}{!}{\input{pics/circle-cases/anbc1}}}
			\newcommand{\resizebox{.6in}{!}{\input{pics/circle-cases/anbc2}}}{\resizebox{.6in}{!}{\input{pics/circle-cases/anbc2}}}
			\newcommand{\resizebox{.6in}{!}{\input{pics/circle-cases/anbc3}}}{\resizebox{.6in}{!}{\input{pics/circle-cases/anbc3}}}

			\newcommand{\resizebox{.6in}{!}{\input{pics/circle-cases/anbnc1}}}{\resizebox{.6in}{!}{\input{pics/circle-cases/anbnc1}}}
			\newcommand{\resizebox{.6in}{!}{\input{pics/circle-cases/anbnc2}}}{\resizebox{.6in}{!}{\input{pics/circle-cases/anbnc2}}}
			\newcommand{\resizebox{.6in}{!}{\input{pics/circle-cases/anbnc3}}}{\resizebox{.6in}{!}{\input{pics/circle-cases/anbnc3}}}
			\newcommand{\resizebox{.6in}{!}{\input{pics/circle-cases/anbnc4}}}{\resizebox{.6in}{!}{\input{pics/circle-cases/anbnc4}}}

		\begin{tabular}{m{10ex} m{10ex} m{10ex} m{10ex} m{10ex}}
		 $a=b=c$ &  \resizebox{.6in}{!}{\input{pics/circle-cases/abc1}} & \resizebox{.6in}{!}{\input{pics/circle-cases/abc2}} & \resizebox{.6in}{!}{\input{pics/circle-cases/abc3}} & \resizebox{.6in}{!}{\input{pics/circle-cases/abc4}} \\
		 $a= b\neq c$ &  \resizebox{.6in}{!}{\input{pics/circle-cases/abnc1}} & \resizebox{.6in}{!}{\input{pics/circle-cases/abnc2}} & \resizebox{.6in}{!}{\input{pics/circle-cases/abnc3}} & \\
		 $a\neq b=c$ &  \resizebox{.6in}{!}{\input{pics/circle-cases/anbc1}} & \resizebox{.6in}{!}{\input{pics/circle-cases/anbc2}} & \resizebox{.6in}{!}{\input{pics/circle-cases/anbc3}} &  \\ 
		  $a\neq b\neq c$ &  \resizebox{.6in}{!}{\input{pics/circle-cases/anbnc1}} & \resizebox{.6in}{!}{\input{pics/circle-cases/anbnc2}} & \resizebox{.6in}{!}{\input{pics/circle-cases/anbnc3}} & \resizebox{.6in}{!}{\input{pics/circle-cases/anbnc4}} 
		\end{tabular}

		\caption{This picture shows all possible configurations of the circles $a$, $b$, and $c$.}
		\label{circlecases}
	\end{figure}

	We have not drawn the basepoint, which can be anywhere outside of the dashed circles. We have also not drawn the possible other circles coming from the other crossing resolutions.  

	\textbf{Step 4.2.1: $m\in 000$ and $a=b=c$}. If $a=b=c$, then in each case, $t_1$ is a split map and $t_2$ is a merge map.  Because $\partial_0^{Lee}$ needs to be nonzero, we must label our circle by $w_-$.  This forces a labeling of $w_+\otimes w_+$ in 100; see Figure \ref{RIIIcase1}.

	\newcommand{\abci}{\includegraphics[width=0.35in]{./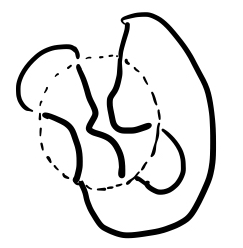}}
	\newcommand{\abcii}{\includegraphics[width=0.35in]{./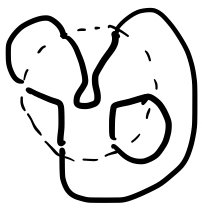}}
	\newcommand{\abciii}{\includegraphics[width=0.35in]{./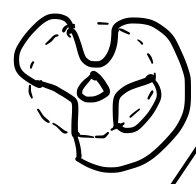}}
	\newcommand{\abciv}{\includegraphics[width=0.35in]{./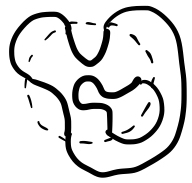}}
	\newcommand{\abcv}{\includegraphics[width=0.35in]{./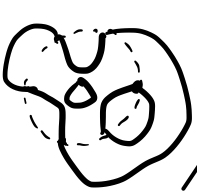}}
	\newcommand{\abcvi}{\includegraphics[width=0.35in]{./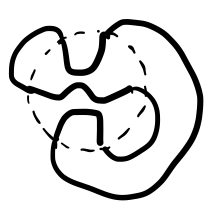}}

	\newcommand{\resizebox{.35in}{!}{\input{pics/RIII-L/abc/000}}}{\resizebox{.35in}{!}{\input{pics/RIII-L/abc/000}}}
	\newcommand{\resizebox{.35in}{!}{\input{pics/RIII-L/abc/001}}}{\resizebox{.35in}{!}{\input{pics/RIII-L/abc/001}}}
	\newcommand{\resizebox{.35in}{!}{\input{pics/RIII-L/abc/101}}}{\resizebox{.35in}{!}{\input{pics/RIII-L/abc/101}}}
	\newcommand{\resizebox{.35in}{!}{\input{pics/RIII-L/abc/100}}}{\resizebox{.35in}{!}{\input{pics/RIII-L/abc/100}}}
	\newcommand{\resizebox{.35in}{!}{\input{pics/RIII-L/abc/101}}}{\resizebox{.35in}{!}{\input{pics/RIII-L/abc/101}}}
	\newcommand{\resizebox{.35in}{!}{\input{pics/RIII-L/abc/110}}}{\resizebox{.35in}{!}{\input{pics/RIII-L/abc/110}}}

	\begin{figure}[H]
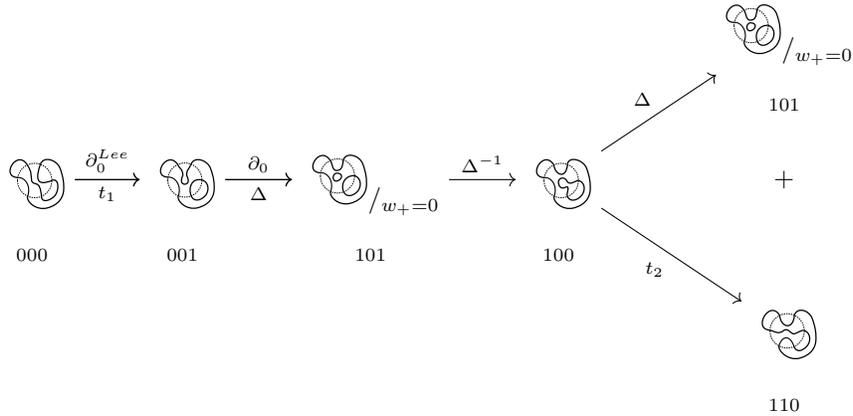

		\centering
		\[
		\RIIIcase{\resizebox{.35in}{!}{\input{pics/RIII-L/abc/000}}}{\resizebox{.35in}{!}{\input{pics/RIII-L/abc/001}}}{\resizebox{.35in}{!}{\input{pics/RIII-L/abc/101}}}{\resizebox{.35in}{!}{\input{pics/RIII-L/abc/100}}}{\resizebox{.35in}{!}{\input{pics/RIII-L/abc/101}}}{\resizebox{.35in}{!}{\input{pics/RIII-L/abc/110}}}
		\]
		\caption{The first of four cases with $a=b=c$.  In each case, the labeling of the circle in 000 must be $w_-$, which forces a labeling of $w_+ \otimes w_+$ in 100.}
		\label{RIIIcase1}
	\end{figure}
	The possibilities for $x_2$ are $\{e,f,h,v_2, v_{-2}, \tilde{v}_0, d,D,x\}$.  It cannot be $e,f,h$, since $x_2$ must change the homological be degree by one.  Moreover, $v_2, v_{-2}$ and $\tilde{v}_0$ are each the 0 map, since we are only involving trivial circles.  The labeling $w_+\otimes w_+$ implies that $D$ is the 0 map.  Finally, the terms obtained from acting by either $d$ or $x$ will cancel when we quotient to $\Ciii$.  For example, if we act by $d$, then the relation $\beta_1=\tau_1\beta_1$ identifies the terms obtained by acting by $d_{10*}$ and $d_{1*0}$, and so they will cancel.  On the other hand, if we act by $x$, the terms we obtain in vertices 101 and 110 are
	$$
		\underbrace{d_{10*} D_{100} + d_{101} D_{10*} + D_{10*}d_{100} + D_{101}d_{10*}}_{\mbox{vertex } 101} + \underbrace{d_{1*0} D_{100} + d_{110} D_{1*0} + D_{1*0}d_{100} + D_{110}d_{1*0}}_{\mbox{vertex } 110}
	$$
	Now, the terms involving $D_{10*}$ and $D_{1*0}$ are zero, because $w_+ = 0$ in vertex 101 and both circles in vertex 100 are labeled by $w_+$.  We are left with
	\begin{align*}
		 d_{10*} D_{100} + D_{101}d_{10*} + d_{1*0} D_{100} + D_{110}d_{1*0}\\
	\end{align*}
	Because of the $w_+$ labelings in vertex 100, the only nonzero parts of $D_{101}$ and $D_{110}$ come from applying $D$ amongst the other circles in the resolution.  It follows that $D_{101}d_{10*}$ and $D_{110}d_{1*0}$ will be identified when we quotient, and so they will cancel.  The $d_{10*} D_{100}$ and $d_{1*0} D_{100}$ terms will also cancel.

	\textbf{Step 4.2.2: $m\in 000$ and $a=b\neq c$}.  If $a=b\neq c$, then in each case, $t_1$ is a split map and $t_2$ is a split map.  Again, we need to involve trivial circles for $t_1$, otherwise $\partial_0^{Lee}=0$; see Figure \ref{RIIIcase2}.

	\newcommand{\abnci}{\includegraphics[width=0.35in]{./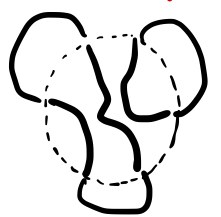}}
	\newcommand{\abncii}{\includegraphics[width=0.35in]{./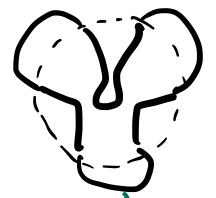}}
	\newcommand{\abnciii}{\includegraphics[width=0.35in]{./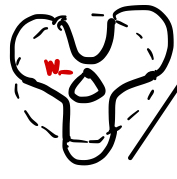}}
	\newcommand{\abnciv}{\includegraphics[width=0.35in]{./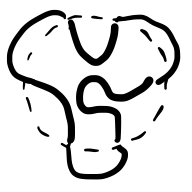}}
	\newcommand{\abncv}{\includegraphics[width=0.35in]{./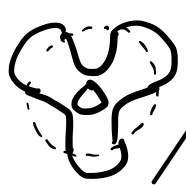}}
	\newcommand{\abncvi}{\includegraphics[width=0.35in]{./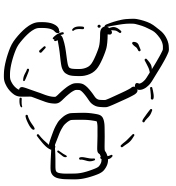}}

	\newcommand{\resizebox{!}{.35in}{\input{pics/RIII-L/abnc/000}}}{\resizebox{!}{.35in}{\input{pics/RIII-L/abnc/000}}}
	\newcommand{\resizebox{!}{.35in}{\input{pics/RIII-L/abnc/001}}}{\resizebox{!}{.35in}{\input{pics/RIII-L/abnc/001}}}
	\newcommand{\resizebox{!}{.35in}{\input{pics/RIII-L/abnc/101}}}{\resizebox{!}{.35in}{\input{pics/RIII-L/abnc/101}}}
	\newcommand{\resizebox{!}{.35in}{\input{pics/RIII-L/abnc/100}}}{\resizebox{!}{.35in}{\input{pics/RIII-L/abnc/100}}}
	\newcommand{\resizebox{!}{.35in}{\input{pics/RIII-L/abnc/101}}}{\resizebox{!}{.35in}{\input{pics/RIII-L/abnc/101}}}
	\newcommand{\resizebox{!}{.35in}{\input{pics/RIII-L/abnc/110}}}{\resizebox{!}{.35in}{\input{pics/RIII-L/abnc/110}}}

	\begin{figure}[H]
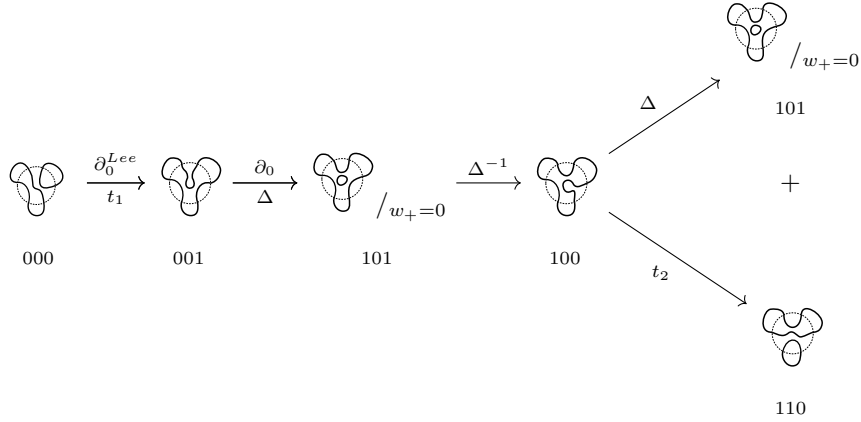

	$$
		\RIIIcase{\resizebox{!}{.35in}{\input{pics/RIII-L/abnc/000}}}{\resizebox{!}{.35in}{\input{pics/RIII-L/abnc/001}}}{\resizebox{!}{.35in}{\input{pics/RIII-L/abnc/101}}}{\resizebox{!}{.35in}{\input{pics/RIII-L/abnc/100}}}{\resizebox{!}{.35in}{\input{pics/RIII-L/abnc/101}}}{\resizebox{!}{.35in}{\input{pics/RIII-L/abnc/110}}}
	$$
	\caption{The first of three cases with $a=b\neq c$.  The labeling of the circle in 000 must be $w_-\otimes w_-$, which forces a labeling of $w_+$ in 100.}
	\label{RIIIcase2}
	\end{figure}	

	Since the circle in vertex 100 must be labeled by $w_+$, by a similar argument to the case of $a=b=c$, acting by $v_2, v_{-2}, \tilde{v}_0, D$ are all $0$, and the terms obtained from acting by either $d$ or $x$ will cancel when we quotient to $\Ciii$.  Seeing that the terms will cancel in the quotient if we act by $x$ in vertex 100 is slightly different than before.  To see this explicitly, we start as in the case of $a=b=c$ by examining the terms 
	$$
		\underbrace{d_{10*} D_{100} + d_{101} D_{10*} + D_{10*}d_{100} + D_{101}d_{10*}}_{\mbox{vertex } 101} + \underbrace{d_{1*0} D_{100} + d_{110} D_{1*0} + D_{1*0}d_{100} + D_{110}d_{1*0}}_{\mbox{vertex } 110}
	$$
	Now, $d_{10*} D_{100}$ and $d_{1*0} D_{100}$ will cancel in the quotient.  Also, $D_{10*}$ is the zero map due to the relation $w_{+} = 0$ in vertex 101.  The $w_+$ label implies that $d_{110}D_{1*0}$ is zero.  It remains to show that the terms
	\begin{align*}
		D_{101}d_{10*} + D_{1*0}d_{100} + D_{110}d_{1*0}
	\end{align*}
	cancel.  Label the circles in 100 by $c_1 \otimes \cdots \otimes c_n \otimes w_+$.  The idea is to show that part of $D_{110}d_{1*0}$ will cancel with $D_{101}d_{10*}$ (the part involving the $c_i$ themselves) and that the rest will cancel with $D_{1*0}d_{100}$ (the part involving the $w_+$).  Indeed, we may write the $D_{110}d_{1*0}$ term as
	\begin{align*}
		D_{110}d_{1*0} &= D_{110}(c_1 \otimes \cdots \otimes c_n \otimes w_+ \otimes w_-) + D_{110}(c_1 \otimes \cdots \otimes c_n \otimes w_- \otimes w_+) \\ 
		& = D^c_{110}(c_1 \otimes \cdots \otimes c_n \otimes w_+ \otimes w_-) + D^c_{110}(c_1 \otimes \cdots \otimes c_n \otimes w_- \otimes w_+) \\ 
		& \quad + D^{w_+}_{110}(c_1 \otimes \cdots \otimes c_n \otimes w_+ \otimes w_-) + D^{w_+}_{110}(c_1 \otimes \cdots \otimes c_n \otimes w_- \otimes w_+) \\ 
		& \quad + D^{w_-}_{110}(c_1 \otimes \cdots \otimes c_n \otimes w_+ \otimes w_-) + D^{w_-}_{110}(c_1 \otimes \cdots \otimes c_n \otimes w_- \otimes w_+) \\ 
	\end{align*}
	where $D^c_{110}$ is the part of $D_{110}$ that involves only crossings among the circles $c_1, \ldots, c_n$, $D^{w_+}$ is the part of $D_{110}$ that involves only crossings with the circle labeled $w_+$, and $D^{w_-}$ is the part of $D_{110}$ that involves only crossings with the circle labeled $w_-$.  By the definition of the Lee differential, the labels imply $D^{w_+}_{110}(c_1 \otimes \cdots \otimes c_n \otimes w_+ \otimes w_-) = D^{w_+}_{110}(c_1 \otimes \cdots \otimes c_n \otimes w_- \otimes w_+) = 0$, and so
	\begin{align*}
		D_{110}d_{1*0} &=  D^c_{110}(c_1 \otimes \cdots \otimes c_n \otimes w_+ \otimes w_-) + D^c_{110}(c_1 \otimes \cdots \otimes c_n \otimes w_- \otimes w_+) \\ 
		& \quad + D^{w_-}_{110}(c_1 \otimes \cdots \otimes c_n \otimes w_- \otimes w_+) + D^{w_-}_{110}(c_1 \otimes \cdots \otimes c_n \otimes w_+ \otimes w_-) 
	\end{align*}
	On the other hand, $D_{101}d_{10*}$ can be written as 
	\begin{align*}
		D_{101}(c_1\otimes \cdots \otimes c_n \otimes w_+ \otimes w_-) &= D_{101}^c(c_1\otimes \cdots \otimes c_n \otimes w_+ \otimes w_-) \\ 
		& \quad + D^w_{101}(c_1\otimes \cdots \otimes c_n \otimes w_+ \otimes w_-)
	\end{align*}
	where $D^w_{101}$ is the part of $D_{101}$ involving a crossing with either the (outermost) circle labeled $w_+$ or the circle labeled $w_-$.  The $w_+$ label together with the relation $w_+=0$ in vertex 101 implies that $D^w_{101}(c_1\otimes \cdots \otimes c_n \otimes w_+ \otimes w_-) = 0$.  In the quotient $\Ciii$, $D_{101}^c(c_1\otimes \cdots \otimes c_n \otimes w_+ \otimes w_-)$ is identified with 
	$$D_{110}(c_1 \otimes \cdots \otimes c_n \otimes w_+ \otimes w_-) + D_{110}(c_1 \otimes \cdots \otimes c_n \otimes w_- \otimes w_+)$$
	Therefore, it remains to examine the $D_{1*0}d_{100}$ term, which we may write as 
	\begin{align*}
		D_{1*0}d_{100}(c_1\otimes \cdots \otimes c_n \otimes w_+) &= D_{1*0}d^c_{100}(c_1\otimes \cdots \otimes c_n \otimes w_+) \\ 
		&\quad + D_{1*0}d^w_{100}(c_1\otimes \cdots \otimes c_n \otimes w_+)
	\end{align*}
	Because of the $w_+$ label, $D_{1*0}d^c_{100}(c_1\otimes \cdots \otimes c_n \otimes w_+) = 0$, and so it remains to show that 
	$$D_{1*0}d^w_{100}(c_1\otimes \cdots c_n \otimes w_+)$$
	and
	$$D^{w_+}_{110}(c_1 \otimes \cdots \otimes c_n \otimes w_- \otimes w_+) + D^{w_-}_{110}(c_1 \otimes \cdots \otimes c_n \otimes w_+ \otimes w_-)$$  
	cancel in $\Ciii$.  This is indeed the case since to compute $D^{w_+}_{110}(c_1 \otimes \cdots \otimes c_n \otimes w_- \otimes w_+)$, we need only consider crossings where either a circle $c_i$ labeled $w_-$ merges with the $w_-$ or the circle labeled $w_-$ splits.  The same is true to compute $D^{w_-}_{110}(c_1 \otimes \cdots \otimes c_n \otimes w_+ \otimes w_-)$.  On the other hand, to compute $D_{1*0}d^w_{100}(c_1\otimes \cdots \otimes c_n \otimes w_+)$ we again have two cases.  The first case consists of crossings where a circle $c_i$ labeled $w_-$ merges with the $w_+$.  These terms will cancel with those from the first case above.  The second case consists of the crossings where a $w_+$ splits to $w_- \otimes w_+ + w_+ \otimes w_-$.  These terms will cancel with the second case above.

    \textbf{Step 4.2.3: $m\in 000$ and $a\neq b=c$}.  We can now study the case $a\neq b = c$.  In this scenario, $t_1$ is a split map and $t_2$ is a merge map; see Figure \ref{RIIIcase3}.

	\newcommand{\anbci}{\includegraphics[width=0.35in]{./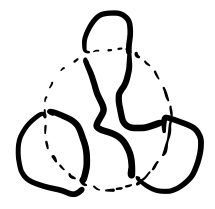}}
	\newcommand{\anbcii}{\includegraphics[width=0.35in]{./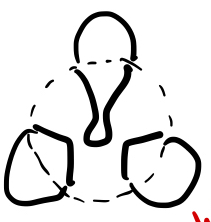}}
	\newcommand{\anbciii}{\includegraphics[width=0.35in]{./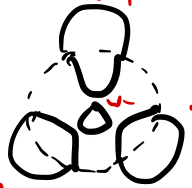}}
	\newcommand{\anbciv}{\includegraphics[width=0.35in]{./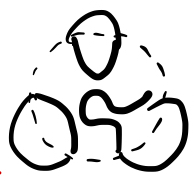}}
	\newcommand{\anbcv}{\includegraphics[width=0.35in]{./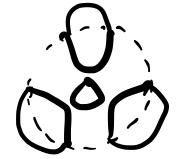}}
	\newcommand{\anbcvi}{\includegraphics[width=0.35in]{./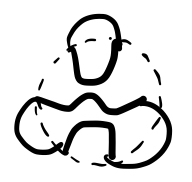}}

	\newcommand{\resizebox{!}{.35in}{\input{pics/RIII-L/anbc/000}}}{\resizebox{!}{.35in}{\input{pics/RIII-L/anbc/000}}}
	\newcommand{\resizebox{!}{.35in}{\input{pics/RIII-L/anbc/001}}}{\resizebox{!}{.35in}{\input{pics/RIII-L/anbc/001}}}
	\newcommand{\resizebox{!}{.35in}{\input{pics/RIII-L/anbc/101}}}{\resizebox{!}{.35in}{\input{pics/RIII-L/anbc/101}}}
	\newcommand{\resizebox{!}{.35in}{\input{pics/RIII-L/anbc/100}}}{\resizebox{!}{.35in}{\input{pics/RIII-L/anbc/100}}}
	\newcommand{\resizebox{!}{.35in}{\input{pics/RIII-L/anbc/101}}}{\resizebox{!}{.35in}{\input{pics/RIII-L/anbc/101}}}
	\newcommand{\resizebox{!}{.35in}{\input{pics/RIII-L/anbc/110}}}{\resizebox{!}{.35in}{\input{pics/RIII-L/anbc/110}}}

	\begin{figure}[H]
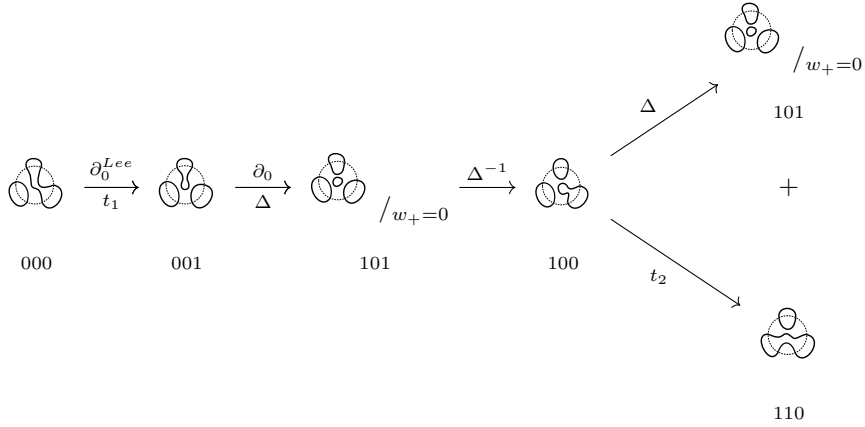

		\[
		\RIIIcase{\resizebox{!}{.35in}{\input{pics/RIII-L/anbc/000}}}{\resizebox{!}{.35in}{\input{pics/RIII-L/anbc/001}}}{\resizebox{!}{.35in}{\input{pics/RIII-L/anbc/101}}}{\resizebox{!}{.35in}{\input{pics/RIII-L/anbc/100}}}{\resizebox{!}{.35in}{\input{pics/RIII-L/anbc/101}}}{\resizebox{!}{.35in}{\input{pics/RIII-L/anbc/110}}}
		\]
		\caption{The first of three cases with $a\neq b= c$.  The labeling of the circle in 000 must be $w_-\otimes w_-$, which forces a labeling of $w_- \otimes w_+ \otimes w_+$ in 100.}
		\label{RIIIcase3}
	\end{figure}
	In each case, the labeling in $000$ must be $w_- \otimes w_-$, and this forces a labeling of $w_- \otimes w_+ \otimes w_+$ in 100 in each case.  Again, $v_2, v_{-2}, \tilde{v}_0, D$ are all $0$, and a similar argument shows that the terms obtained from acting by either $d$ or $x$ will cancel when we quotient to $\Ciii$.  

	\textbf{Step 4.2.4: $m\in 000$ and $a\neq b\neq c$}.  Finally, if $a\neq b \neq c$, then $t_1$ is a merge map and $t_2$ is a merge map; see Figure \ref{RIIIcase4}.

	\newcommand{\anbnci}{\includegraphics[width=0.45in]{./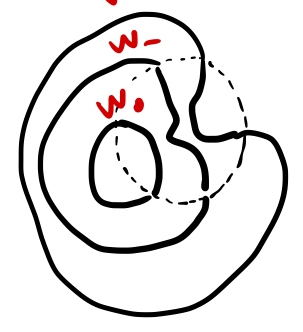}}
	\newcommand{\anbncii}{\includegraphics[width=0.45in]{./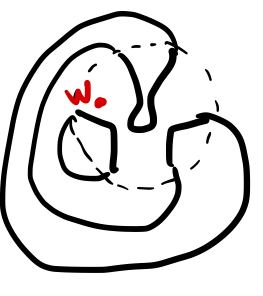}}
	\newcommand{\anbnciii}{\includegraphics[width=0.45in]{./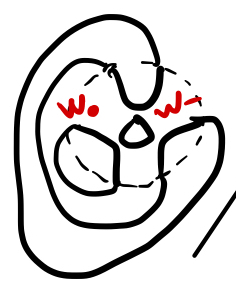}}
	\newcommand{\anbnciv}{\includegraphics[width=0.45in]{./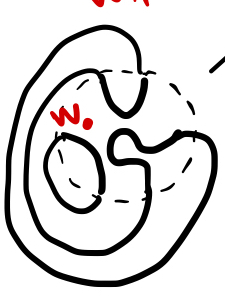}}
	\newcommand{\anbncv}{\includegraphics[width=0.45in]{./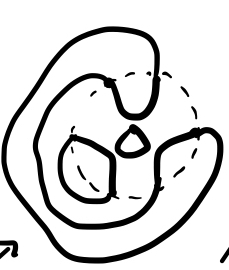}}
	\newcommand{\anbncvi}{\includegraphics[width=0.45in]{./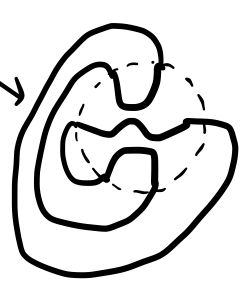}}

	\newcommand{\resizebox{!}{.35in}{\input{pics/RIII-L/anbnc/000}}}{\resizebox{!}{.35in}{\input{pics/RIII-L/anbnc/000}}}
	\newcommand{\resizebox{!}{.35in}{\input{pics/RIII-L/anbnc/001}}}{\resizebox{!}{.35in}{\input{pics/RIII-L/anbnc/001}}}
	\newcommand{\resizebox{!}{.35in}{\input{pics/RIII-L/anbnc/101}}}{\resizebox{!}{.35in}{\input{pics/RIII-L/anbnc/101}}}
	\newcommand{\resizebox{!}{.35in}{\input{pics/RIII-L/anbnc/100}}}{\resizebox{!}{.35in}{\input{pics/RIII-L/anbnc/100}}}
	\newcommand{\resizebox{!}{.35in}{\input{pics/RIII-L/anbnc/101}}}{\resizebox{!}{.35in}{\input{pics/RIII-L/anbnc/101}}}
	\newcommand{\resizebox{!}{.35in}{\input{pics/RIII-L/anbnc/110}}}{\resizebox{!}{.35in}{\input{pics/RIII-L/anbnc/110}}}

	\begin{figure}[H]
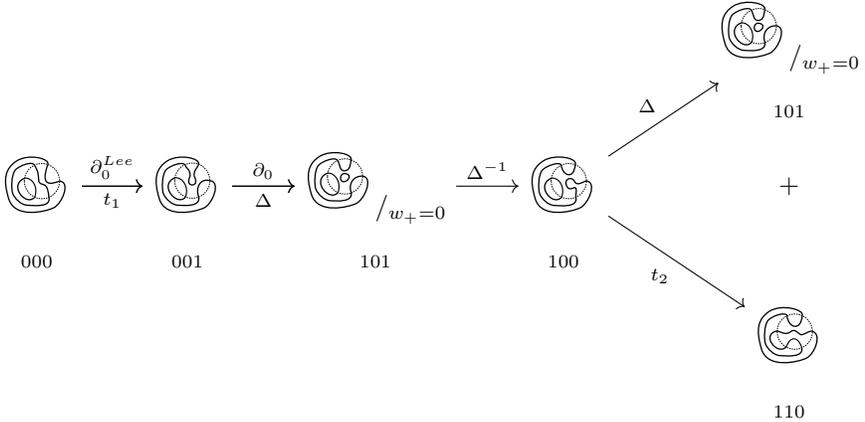

		\[
		\RIIIcase{\resizebox{!}{.35in}{\input{pics/RIII-L/anbnc/000}}}{\resizebox{!}{.35in}{\input{pics/RIII-L/anbnc/001}}}{\resizebox{!}{.35in}{\input{pics/RIII-L/anbnc/101}}}{\resizebox{!}{.35in}{\input{pics/RIII-L/anbnc/100}}}{\resizebox{!}{.35in}{\input{pics/RIII-L/anbnc/101}}}{\resizebox{!}{.35in}{\input{pics/RIII-L/anbnc/110}}}
		\]
		\caption{The first of four cases of $a\neq b \neq c$.  The labeling of the circle in 000 must be $w_-\otimes w_- \otimes w_\bullet$, which forces a labeling of $w_+ \otimes w_\bullet$ in 100.}
		\label{RIIIcase4}
	\end{figure}

	In each case, the labeling in $000$ must be $w_-\otimes w_- \otimes w_\bullet$, where $w_\bullet$ denotes that the innermost circle can be labeled either $w_+$ or $w_-$.  This forces a labeling of $w_+ \otimes w_\bullet$ in 100 in all cases.  For the last time, we verify that $v_2, v_{-2}, \tilde{v}_0, D$ are all $0$, and a similar argument shows that the terms obtained from acting by either $d$ or $x$ will cancel when we quotient to $\Ciii$.  To summarize, we have thus shown that $q\circ A_{2}(x_1,x_2,i_2(m))=0$ for all $m\in 000$ and $x_1,x_2 \in \exsltwo_{dg}$, and we conclude that $k_3'(x_1,x_2,m) = 0$ for all $m\in 000$ and $x_1,x_2 \in \exsltwo_{dg}$ as well.

	\textbf{Step 4.2.5: $m\in 001$}.  We next examine $q\circ A_{2}(x_1,x_2,i_2(m))=0$ in the case $m \in 001$.  The relevant composition in the RIII cube is given in Figure \ref{m001}.

	\begin{figure}[H]
		\[
		\begin{tikzcd}[row sep = tiny, column sep=normal]
			& \RIIILioo \arrow[rd, "\Delta", swap] \arrow[rd, "x_1"] & & & \RIIILioi/_{w_+=0} \\
			\RIIILooi \arrow[ru] \arrow[rd] &  & \RIIILioi/_{w_+=0} \arrow[r, "T"]  & \RIIILioo \arrow[ru, "x_2"] \arrow[rd, "x_2"] &  \ \\ 
			& \RIIILooi \arrow[ru, "\Delta", swap] \arrow[ru, "x_1"] & & & \RIIILiio 
		\end{tikzcd}
		\]
		\caption{The relevant part of the RIII cube.  If we start with an element in 001, $i_2: \Ciii \to \C/\C'$ gives a sum of elements in 001 and 100.  We then act by $x_1$, apply the homotopy $T$, act by $x_2$, and then quotient back to $\Ciii$.}
		\label{m001}
	\end{figure}
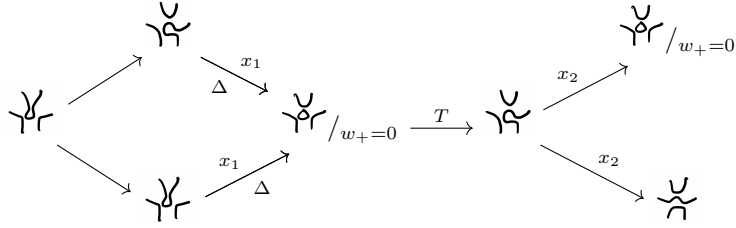

	As before, the possibilities for $x_1$ are $\{e,f,h,v_2, v_{-2}, \tilde{v}_0, d,D,x\}$.  Because $x_1$ needs to increase the homological degree of $m$, it cannot be $e,f$, or $h$.  Since we are working modulo $w_+=0$, both $\partial^{Lee}$ and $\partial_-$ are the zero map, and so $D$, $v_2$, $v_{-2}$, and $\tilde{v}_0$ are all the zero map.  Moreover, $x_1$ cannot be $d$, since the resolutions in 001 and 100 have the same label, which means that they will cancel when mapped to 101.  Similarly, the fact that both resolutions have the same label also implies that the terms in $x=-[d,D]$ will cancel.  We conclude that $q\circ A_{2}(x_1,x_2,i_2(m))=0$ for all $m\in 001$ and $x_1,x_2 \in \exsltwo_{dg}$, and so we have thus shown that $k_3'(x_1,x_2,m) = 0$ on $\Ciii$.

	\textbf{Step 4.2.6: Conclusion}. From the above case analysis, the only possible higher operation is $k_n'$ for $n>3$, which could include a nonzero term of $q\circ A_{n-1}(x_1,x_2, \ldots, x_{n-1},i_2(m))$ with $x_2 = \cdots = x_{n-2} = d$.  But because $d$ is just the inverse to the chain homotopy $T$, this will cycle the module element back and forth between vertices 101 and 100.  In particular, $q\circ A_{n-1}(x_1, x_2, \ldots, x_{n-1},i_2(m)) = q\circ A_2(x_1,x_{n-1},i_2(m))$, which we have already shown is zero.  We conclude that $k'_n=0$ on $\Ciii$ for $n>3$, and this completes the proof that $\Ciii$ has no higher operations.  The symmetry between $\Ciii$ and $\Diii$ implies that $\Diii$ also has no higher operations.

	\textbf{Step 5: The cubes $\Ciii$ and $\Diii$ are quasi-isomorphic}. It remains to construct the map $f: \Ciii \to \Diii$ and show that it respects the (trivial) $L_\infty$-module structures.  Indeed, in $(\C/\C')/\C''$, each $\beta_1 \in 101$ is equivalent via $\tau_1$ to some $\gamma_1\in 110$.  The map $f$ will send an element in $110$ to itself, but as an element of $101$ in $(\D/\D')/\D''$, and it will keep the bottom layer of the cube fixed.  This is an isomorphism on spaces, and Bar-Natan checks that this map is a chain map; see \cite{BN02}.  So, for $s\in \exsltwo_{dg}$ and $x\in \Ciii$ we need to compare $f(s\cdot x)$ and $s\cdot f(x)$, where the module structure is $s\cdot x = q_2(s\cdot i_2(x)) = q_2(q_1(s\cdot i_1(i_2(x)))$.  

	\textbf{Step 5.1: The case $s\in \{e,f,h\}$}.  Suppose that $s\in \{e,f,h\}$.  First we examine the case where $z$ is on the bottom face of the cube.  If $z$ is in $000$ or $010$, then
	\begin{align*}
		q_2(q_1(s\cdot i_1(i_2(z))) = q_2(q_1(s\cdot z))
	\end{align*}
	Note that we abuse notation and think of $z$ as an element of $\C$ on the right-hand side.  If $z$ is in $001$, then 
	\begin{align*}
		q_2(q_1(s\cdot i_1(i_2(z))) &= q_2(q_1(s\cdot i_1(z-\Delta^{-1}(\partial_{\C/\C'}z)))) \\ 
		&= q_2(q_1(s\cdot z-s\cdot \Delta^{-1}(\partial_{\C/\C'}z))) \\ 
		&= q_2(q_1(s\cdot z)) - q_2q_1(s\cdot \Delta^{-1}(\partial_{\C/\C'}z)) \\ 
		&= q_2(q_1(s\cdot z))
	\end{align*}
	because $s\cdot \Delta^{-1}(\partial_{\C/\C'}z)$ is in $100$, which quotients to 0. If $z$ is in $011$, then 
	\begin{align*}
		q_2(q_1(s\cdot i_1(i_2(z))) &= q_2(q_1(s\cdot i_1(z))) \\ 
		&= q_2(q_1(s\cdot(z-m^{-1}\partial_\C z))) \\ 
		&= q_2(q_1(s\cdot z)) - q_2q_1(s\cdot m^{-1}(\partial_{\C}z)) \\ 
		&= q_2(q_1(s\cdot z))
	\end{align*}
	because $s\cdot m^{-1}(\partial_{\C}z)$ is labeled $w_+$, which quotients to 0.  A similar argument shows that $s\cdot z=p_2(p_1(s\cdot z))$, if $z$ is thought of as an element of $\Diii$.  Since $f$ is the identity on the bottom face, it follows that $s\cdot f(z) = f(s\cdot z)$ for $z\in 000,010,001,100$.

	If $z$ is on the top face, we need only consider the case $z \in 110$, since any element in $101$ is equivalent to some $z \in 110$.  Then in $(\C/\C')/\C''$,
	\begin{align*}
		s\cdot z = q_2(q_1(s\cdot i_1(i_2(z))) &= q_2(q_1(s\cdot i_1(z))) \\ 
		&= q_2(q_1(s\cdot(z-m^{-1}\partial_\C z))) \\ 
		&= q_2(q_1(s\cdot z)) - q_2q_1(s\cdot m^{-1}(\partial_{\C}z)) \\ 
		&= q_2(q_1(s\cdot z))
	\end{align*}
	because $s\cdot m^{-1}(\partial_{\C}z)$ is labeled $w_+$, which quotients to 0.  On the other hand, if we consider $z$ as an element of $101$ in $(\D/\D')/\D''$,
	\begin{align*}
		s\cdot z = p_2(p_1(s \cdot j_1(j_2( z))) &= p_2(p_1(s\cdot j_1(z))) \\ 
		&= p_2(p_1(s\cdot(z-m^{-1}\partial_{\D} z))) \\ 
		&= p_2(p_1(s\cdot z)) - p_2p_1(s\cdot m^{-1}(\partial_{\D}z)) \\ 
		&= p_2(p_1(s\cdot z))
	\end{align*}
	Since $f$ identically maps elements in $110$ in $\Ciii$ to those in $101$ in $\Diii$, it follows that $s\cdot f(z) = f(s \cdot z)$ on the top face.

	\textbf{Step 5.2: The case $s \in \{v_2,v_{-2}, \tilde{v}_0, d, D\}$}.  Suppose that $s \in \{v_2,v_{-2}, \tilde{v}_0, d, D\}$.  We again start with the case that $z$ is on the bottom face of $\Ciii$. 	 The cases $z \in 000$ and $z\in 011$ are straightforward to check, since $f$ is the identity on the bottom face.  If $z\in 001$, then in $\Ciii$, 
	 \begin{align*}
	 	q_2(q_1(s\cdot i_1(i_2(z))) &= q_2(q_1(s\cdot i_1(z-\Delta^{-1}(\partial_{\C/\C'}z)))) \\ 
		&= q_2(q_1(s\cdot z-s\cdot \Delta^{-1}(\partial_{\C/\C'}z))) \\ 
		&= q_2(q_1(s\cdot z)) - q_2(q_1(s\cdot \Delta^{-1}(\partial_{\C/\C'}z))) \\ 
		&= q_2(q_1(s_0 \cdot z + s_{*01}\cdot z + s_{0*1}\cdot z))-q_2(q_1(s_{10*}\cdot \Delta^{-1}\partial_{\C/\C'}z + s_{1*0}\cdot \Delta^{-1}\partial_{\C/\C'}z))
	  \end{align*} 
	  and in $\Diii$,
	  \begin{align*}
	  	s\cdot f(z) &= p_2(p_1(s_0\cdot f(z) + s_{*01}\cdot f(z) + s_{0*1}\cdot f(z)))
	  \end{align*}
	  and we must show that $f$ maps the former to the latter.
	  Indeed, in $\Ciii$, the terms $q_2q_1(s_{*01} \cdot z)$ and $q_2q_1(s_{10*}\cdot \Delta^{-1}(\partial_{\C/\C'} z))$ will cancel.  This is because $\Delta^{-1}(\partial_{\C/\C'} z)$ has the same labeling as $z$, and both maps to $101$ are split maps.  Furthermore, $q_2q_1(s_{1*0} \cdot \Delta^{-1}(\partial_{\C/\C'} z))$ in $\Ciii'$ will be mapped via $f$ to $s_{*01}\cdot f(z)$.  This is because $\Delta^{-1}(\partial_{\C/\C'} z)$ has the same labeling as $z$ and the maps $\partial_{1*0}$ in $\Ciii$ and $\partial_{*01}$ in $\Diii$ are of the same type (i.e. they are either both merge or both split), meaning $s$ will act the same across these maps.  The case $z \in 010$ is analogous.  Next, suppose that $z$ is in the top face of the cube.  If $z \in 110$, then
	\begin{align*}
		s\cdot z = q_2(q_1(s\cdot i_1(i_2(z))) &= q_2(q_1(s\cdot i_1(z))) \\ 
		&= q_2(q_1(s\cdot(z-m^{-1}\partial_\C z))) \\ 
		&= q_2(q_1(s_{110} \cdot z)) - q_2q_1(s_{101} \cdot m^{-1}(\partial_{\C}z)) \\ 
		&= q_2(q_1(s_{110} \cdot z))
	\end{align*}
	 where $s_{110}\cdot z$ is the part of $s\cdot z$ that remains in $110$ and $s_{101} \cdot m^{-1}(\partial_{\C}z)$ is the part of $s \cdot m^{-1}(\partial_{\C}z)$ that remains in $101$.  But the latter quotients to 0, as it is labeled by $w_+$.  On the other hand, if we consider $z$ as an element of $101$ in $(\D/\D')/\D''$,
	\begin{align*}
		s\cdot z = p_2(p_1(s \cdot j_1(j_2( z))) &= p_2(p_1(s\cdot j_1(z))) \\ 
		&= p_2(p_1(s\cdot(z-m^{-1}\partial_{\D} z))) \\ 
		&= p_2(p_1(s_{101}\cdot z)) - p_2p_1(s_{110}\cdot m^{-1}(\partial_{\D}z)) \\ 
		&= p_2(p_1(s_{101}\cdot z))
	\end{align*}
	where $s_{101}\cdot z$ is the part of $s\cdot z$ that remains in $101$ and $s_{110} \cdot m^{-1}(\partial_{\D}z)$ is the part of $s \cdot m^{-1}(\partial_{\D}z)$ that remains in $110$. Similar to before, the latter quotients to 0, as it is labeled by $w_+$.  Since $f$ identically maps elements in $110$ in $\Ciii$ to those in $101$ in $\Diii$, we conclude that $s\cdot f(z) = f(s \cdot z)$ on the top face.

	\textbf{Step 5.3: The case $s = x = -[d,D]$}.  Finally, suppose that $s = x = -[d,D]$.  For $z\in \Ciii$,
		$$f(s\cdot z) = f((-dD-Dd)\cdot z) = -df(D\cdot z)-Df(d\cdot z) = (-dD-Dd)\cdot f(z) = s\cdot f(z)$$

	\textbf{Step 5.4: Conclusion}. To summarize, we have shown that for every element $s$ in a basis of $\exsltwo_{dg}$, $f(s\cdot z) = s\cdot f(z)$.  We conclude that $f: \Ciii \to \Diii$ is an $L_\infty$-module quasi-isomorphism, and so up to quasi-isomorphism, the $L_\infty$-module structure on $\CKh(L)$ is invariant under the Reidemeister III move. 	
\end{proof}

\subsection{Invariance of the $\exsltwo$-module structure}

Now that we have shown the invariance of the $\exsltwo_{dg}$ $L_\infty$-module structure on $\CKh(L)$ under Reidemeister moves, we can show that the $\exsltwo$ $L_\infty$-module structure on $\CKh(L)$ is invariant as well.

\begin{theorem}
\label{thm:wedgeinvariance}
	Up to $L_\infty$-quasi-isomorphism, the $\exsltwo$ $L_\infty$-module structure is invariant under Reidemeister moves.
\end{theorem}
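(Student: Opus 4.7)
(Proposal.)
The plan is to deduce this from the $\exsltwo_{dg}$-invariance established in the previous subsection by observing that the $\exsltwo$ $L_\infty$-module structure on $\CKh(L)$ is obtained from the $\exsltwo_{dg}$ $L_\infty$-module structure by applying two successive restriction-of-scalars functors, each of which preserves $L_\infty$-quasi-isomorphisms by Theorem~\ref{restriction}.

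More precisely, recall from the proof of Theorem~\ref{thm:wedgestructure} that the $\exsltwo$-module structure on $\CKh(L)$ is built in two steps. First, the cochain contraction of Lemma~\ref{contraction} gives an $L_\infty$-algebra quasi-isomorphism $I : H(\exsltwo_{dg}) \to \exsltwo_{dg}$, and the induced $H(\exsltwo_{dg})$-module structure on $\CKh(L)$ is obtained by the explicit formula displayed in the proof of Theorem~\ref{thm:wedgestructure}; this formula is precisely the restriction-of-scalars formula $I^*$ of Theorem~\ref{restriction}. Second, the inclusion $\iota : \exsltwo \hookrightarrow H(\exsltwo_{dg})$ is an $L_\infty$-algebra homomorphism with $\iota_n = 0$ for $n \geq 2$, and the $\exsltwo$-structure on $\CKh(L)$ is $\iota^*$ of the $H(\exsltwo_{dg})$-structure. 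Composing, the $\exsltwo$-module structure on $\CKh(L)$ is exactly $(I \circ \iota)^*$ applied to the $\exsltwo_{dg}$-module $\CKh(L)$.

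Once this identification is in hand, the theorem follows formally. By the preceding subsection, for any two annular diagrams $L$ and $L'$ related by a Reidemeister move there is an $L_\infty$-quasi-isomorphism $\CKh(L) \simeq \CKh(L')$ in $\exsltwo_{dg}\Mod$. Applying the functor $(I \circ \iota)^*$ and invoking the fact from Theorem~\ref{restriction} that restriction of scalars sends $L_\infty$-quasi-isomorphisms to $L_\infty$-quasi-isomorphisms, we obtain an $L_\infty$-quasi-isomorphism of the corresponding $\exsltwo$-modules, as required.

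The one small verification needed is the first step: matching the explicit transferred operations written in Theorem~\ref{thm:wedgestructure} with the restriction-of-scalars formula of Theorem~\ref{restriction} for the homomorphism $I$. This is not an obstacle so much as a bookkeeping check, since both are expressed in terms of the same components $I_1 = -i$ and $I_2 = K \circ l_2 \circ (I_1 \otimes I_1)$ of the $L_\infty$-morphism, and the higher $I_n$ for $n \geq 3$ vanish on elements of $\exsltwo \subset H(\exsltwo_{dg})$ because the image of $K$ is spanned by $D$ and $[D,y]=0$ for $y \in \exsltwo_{dg} \setminus \{d\}$. Hence the two constructions agree on the nose, and the argument above goes through.
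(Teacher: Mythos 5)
Your proof is correct and follows essentially the same route as the paper: both identify the $\exsltwo$-module structure as the image of the $\exsltwo_{dg}$-module structure under restriction of scalars through $I: H(\exsltwo_{dg}) \to \exsltwo_{dg}$ (followed by restriction to the subalgebra $\exsltwo \subset H(\exsltwo_{dg})$), and both then invoke the fact that restriction of scalars preserves $L_\infty$-quasi-isomorphisms. Your closing remark matching the transferred formulas with the restriction-of-scalars formula is an explicit version of a check the paper treats as implicit in the construction of Theorem~\ref{thm:wedgestructure}, so it adds nothing new but is a harmless elaboration.
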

\begin{proof}
	This follows from the fact that the $\exsltwo$ $L_\infty$-module structure on $\CKh(L)$ was obtained from the $\exsltwo_{dg}$ $L_\infty$-module structure by restricting scalars through an $L_\infty$-algebra homomorphism $I: H(\exsltwo_{dg}) \to \exsltwo_{dg}$.  In particular, restricton of scalars preserves $L_\infty$-quasi-isomorphisms (see \cite{D22}), so applying the restriction of scalars functor to the quasi-isomorphisms constructed in the proof of invariance for $\exsltwo_{dg}$ yields quasi-isomorphisms of these complexes considered as $L_\infty$-modules over $H(\exsltwo)$.  Finally, the $\exsltwo$ $L_\infty$-module structure is invariant, since $\exsltwo$ is an $L_\infty$-subalgebra of $H(\exsltwo_{dg})$.
\end{proof}

\section{The $L_\infty$-module structure on $\AKh(L)$}

In this section, we explain how the annular Khovanov homology $\AKh(L)$ has an $L_\infty$-module structure that is invariant under Reidemeister moves.

\begin{theorem}
	Let $L$ be an annular link.  There is an $L_\infty$-module structure on $\AKh(L)$, invariant under Reidemeister moves.  It is well-defined up to $L_\infty$-quasi-isomorphism.  
\end{theorem}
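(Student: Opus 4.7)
The plan is to transfer the $L_\infty$-module structure on $\CKh(L)$ down to its homology $\AKh(L)$ using homotopy transfer (Theorem \ref{thm:transfer}), and then to deduce Reidemeister invariance on $\AKh(L)$ from the corresponding invariance on $\CKh(L)$ established in Theorem \ref{thm:wedgeinvariance}. Fix a diagram $P(L)$ and work over $\FF_2$. By Theorem \ref{thm:wedgestructure}, $\CKh(L)$ is an $\exsltwo$ $L_\infty$-module whose $k_1$ is the annular differential $\partial_0$, and by definition $\AKh(L)=H(\CKh(L),\partial_0)$. I would choose a splitting of $\partial_0$ to build a chain contraction
\[
\begin{tikzcd}[row sep = normal, column sep=normal]
\CKh(L) \arrow[loop left, distance=1em, "T"] \arrow[r, shift left=1, "q"] & \AKh(L) \arrow[l, shift left=1, "i"]
\end{tikzcd}
\]
of the sort required by Theorem \ref{thm:transfer}, producing transferred higher operations $\{k_n'\}$ on $\AKh(L)$ with $k_1'=0$. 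Lemma \ref{lem:transfermorphism} extends $i$ to an $L_\infty$-module morphism $I:\AKh(L)\to \CKh(L)$ whose linear part is $i$, and since $i$ is a quasi-isomorphism of underlying chain complexes, $I$ is an $L_\infty$-quasi-isomorphism. Any two choices of contraction data yield quasi-isomorphic transferred structures by composing the corresponding morphisms $I$, so the structure on $\AKh(L)$ is well-defined up to $L_\infty$-quasi-isomorphism.

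For invariance, suppose $L$ and $L'$ differ by a single Reidemeister move. Theorem \ref{thm:wedgeinvariance} supplies an $L_\infty$-quasi-isomorphism $F:\CKh(L)\to \CKh(L')$ of $\exsltwo$-modules whose linear part is a chain-level quasi-isomorphism. After choosing contractions on both sides as above, I would form the zigzag
\[
\AKh(L) \xrightarrow{I} \CKh(L) \xrightarrow{F} \CKh(L') \xleftarrow{I'} \AKh(L'),
\]
a chain of $L_\infty$-quasi-isomorphisms. To produce an honest $L_\infty$-module map $\AKh(L)\to \AKh(L')$ that realizes this class, I would invert $I'$ in the homotopy category, using the standard fact that any $L_\infty$-quasi-isomorphism between $L_\infty$-modules admits a homotopy inverse, a consequence of the transfer machinery in Section \ref{sec:transfer}. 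Concretely, $q'\circ F\circ I$ provides the first approximation, with higher corrections supplied by the perturbation formulas of Theorem \ref{thm:transfer}. Iterating across a sequence of Reidemeister moves then shows that the quasi-isomorphism class of $\AKh(L)$ depends only on the isotopy class of $L$ in $A\times I$.

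The main obstacle is organizing the inversion of $L_\infty$-quasi-isomorphisms cleanly, without re-deriving the full transfer formulas. One way to avoid this is to characterize the transferred structure on $\AKh(L)$ as a minimal model of $\CKh(L)$, so that $F$ descends canonically up to homotopy to a map of minimal models; uniqueness of minimal models then gives the desired quasi-isomorphism $\AKh(L)\simeq \AKh(L')$ directly, bypassing the need to construct a homotopy inverse to $I'$ by hand.
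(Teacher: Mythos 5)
Your proposal follows essentially the same route as the paper: transfer the $\exsltwo$ $L_\infty$-module structure from $\CKh(L)$ to $\AKh(L)$ via Theorem~\ref{thm:transfer}, use Lemma~\ref{lem:transfermorphism} to obtain an $L_\infty$-quasi-isomorphism $\AKh(L)\to\CKh(L)$, and then compose with the chain-level Reidemeister quasi-isomorphisms from Theorem~\ref{thm:wedgeinvariance} to conclude. The paper is in fact terser than you are on the final step---it simply draws the square and declares $\AKh(L)$ and $\AKh(L')$ quasi-isomorphic without explicitly inverting $I'$ or invoking minimal models---so your added remarks on homotopy inversion and uniqueness of minimal models are a harmless elaboration rather than a divergence.
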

\begin{proof}
The situation can be summarized by the following diagram.
\[
	\begin{tikzcd}[row sep = small, column sep=small]
		\exsltwo_{dg} \arrow[d] \arrow[r] & \CKh(L) \arrow[d]  \\ 
		\exsltwo \arrow[r, dashed] \arrow[ru, dashed] & \AKh(L)  \\ 
	\end{tikzcd}
\]


Theorem \ref{thm:wedgeinvariance} proved that, up to $L_\infty$-quasi-isomorphism, the $L_\infty$-module structure on $\CKh(L)$ over $\exsltwo$ is invariant under Reidemeister moves.  By Theorem \ref{thm:transfer}, $\AKh(L)$ inherits an $L_\infty$-module structure over $\exsltwo$ via any choice of chain contraction $\CKh(L)\to \AKh(L)$.  By Lemma \ref{lem:transfermorphism}, $\AKh(L)$ is quasi-isomorphic to $\CKh(L)$, so if $L$ and $L'$ differ by Reidemeister moves, we have the following diagram:
\[
	\begin{tikzcd}[row sep = small, column sep=small]
		\CKh(L) \arrow[d, "\cong"] \arrow[r, "\cong"] & \CKh(L') \arrow[d, "\cong"]  \\ 
		\AKh(L) \arrow[r, dashed] & \AKh(L')  \\ 
	\end{tikzcd}
\]
This shows that $\AKh(L)$ and $\AKh(L')$ are quasi-isomorphic as $L_\infty$-modules over $\exsltwo$, and so this $L_\infty$-module structure is well-defined up to $L_\infty$-quasi-isomorphism. 
\end{proof}

\section{Examples}
 
In this section, we explore the $L_\infty$-module structure of several knots and links. 

\begin{example}
	Let $L$ be any link in $S^3$ where $\partial^{Lee}$ is nonzero on Khovanov homology.  We may view $L$ as an annular link by placing the basepoint away from the link.  If we denote the $L_\infty$-module operation on $\AKh(L)$ by $k_n$, $\partial_0^{Lee}$ will yield a corresponding nontrivial $k_3(v_2,v_{-2},m)$ on $\AKh(L)$.  Indeed, the $L_\infty$-module structure on $\AKh(L)$ is induced from a cochain contraction $\begin{tikzcd}[row sep = normal, column sep=normal]
			 \CKh(L) \arrow[loop left, distance=1em, "T"] \arrow[r, shift left=1, "q"] & \AKh(L).  \arrow[l, shift left=1, "i"]
		\end{tikzcd}$
	If $k'_n$ is the $L_\infty$-module operation on $\CKh(L)$, the following equation gives a formula for $k_3(x_1,x_2,m)$.
	$$k_3(x_1,x_2,m) = k'_3(x_1,x_2,m) + q\circ k'_2(x_1,T\circ k_2(x_2,i(m))) + q\circ k'_2(x_2, T\circ k_2(x_1,i(m)))$$
	The $k'_2$ operations vanish because all of the circles involved are trivial.
\end{example}

\begin{example}
	In the above example, suppose we put an unknot $U$ around the basepoint.  Let $w \in \AKh(L)$ be a generator on which $\partial_0^{Lee}$ acts nontrivially.  After choosing a cochain contraction that respects $\CKh(U\sqcup L) = V \otimes \CKh(L)$, then in $\AKh(U \sqcup L)$, the generators $v_\pm \otimes w$ have both nontrivial $k_2$ and $k_3$ actions.
\end{example}

\begin{example}
	The left-handed trefoil with the basepoint in the center is an example of a knot $K$ where $\AKh(K)$ has both nontrivial $k_2$ and $k_3$ operations; see Figure \ref{fig:trefoil}.

	\newcommand{\resizebox{.75in}{!}{\input{pics/trefoil-cube/000}}}{\resizebox{.75in}{!}{\input{pics/trefoil-cube/000}}}
	\newcommand{\resizebox{.75in}{!}{\input{pics/trefoil-cube/001}}}{\resizebox{.75in}{!}{\input{pics/trefoil-cube/001}}}
	\newcommand{\resizebox{.75in}{!}{\input{pics/trefoil-cube/010}}}{\resizebox{.75in}{!}{\input{pics/trefoil-cube/010}}}
	\newcommand{\resizebox{.75in}{!}{\input{pics/trefoil-cube/100}}}{\resizebox{.75in}{!}{\input{pics/trefoil-cube/100}}}
	\newcommand{\resizebox{.75in}{!}{\input{pics/trefoil-cube/011}}}{\resizebox{.75in}{!}{\input{pics/trefoil-cube/011}}}
	\newcommand{\resizebox{.75in}{!}{\input{pics/trefoil-cube/101}}}{\resizebox{.75in}{!}{\input{pics/trefoil-cube/101}}}
	\newcommand{\resizebox{.75in}{!}{\input{pics/trefoil-cube/110}}}{\resizebox{.75in}{!}{\input{pics/trefoil-cube/110}}}
	\newcommand{\resizebox{.75in}{!}{\input{pics/trefoil-cube/111}}}{\resizebox{.75in}{!}{\input{pics/trefoil-cube/111}}}

	\begin{figure}[h]
		\centering 
	\begin{tikzpicture}[on top/.style={preaction={draw=white,-,line width=#1}},on top/.default=4pt]
	 	\node at (0,0) (n000) {\resizebox{.75in}{!}{\input{pics/trefoil-cube/000}}};
	 	\node at (3,2.5) (n001) {\resizebox{.75in}{!}{\input{pics/trefoil-cube/001}}};
	 	\node at (3,0) (n010) {\resizebox{.75in}{!}{\input{pics/trefoil-cube/010}}};
	 	\node at (3,-2.5) (n100) {\resizebox{.75in}{!}{\input{pics/trefoil-cube/100}}};
	 	\node at (6,2.5) (n011) {\resizebox{.75in}{!}{\input{pics/trefoil-cube/011}}};
	 	\node at (6,0) (n101) {\resizebox{.75in}{!}{\input{pics/trefoil-cube/101}}};
	 	\node at (6,-2.5) (n110) {\resizebox{.75in}{!}{\input{pics/trefoil-cube/110}}};
	 	\node at (9,0) (n111) {\resizebox{.75in}{!}{\input{pics/trefoil-cube/111}}};
	 	\scriptsize{
  		\node at (0,-1) (blabel) {000};
  		\node at (3,1.5) (blabel) {001};
  		\node at (3,-1) (blabel) {010};
  		\node at (3,-3.5) (blabel) {100};
  		\node at (6,1.5) (blabel) {011};
  		\node at (6,-1) (blabel) {101};
  		\node at (6,-3.5) (blabel) {110};
  		\node at (9,-1) (blabel) {111};
	 	\draw [->] (n000) -- (n001);
	 	\draw [->] (n000) -- (n010);
	 	\draw [->] (n000) -- (n100);
	 	
	 	\draw [->] (n001) -- (n011);
	 	\draw [->] (n100) -- (n110);
	 	\draw [->] (n010) -- (n011);
	 	\draw [->] (n010) -- (n110);
	 	\draw [->, on top] (n001) -- (n101);
	 	\draw [->, on top] (n100) -- (n101);

	 	\draw [->] (n011) -- (n111);
	 	\draw [->] (n101) -- (n111);
	 	\draw [->] (n110) -- (n111);
	 	}
	\end{tikzpicture}
		\caption{The cube of resolutions for the left-handed trefoil knot with basepoint in the center.}
		\label{fig:trefoil}
	\end{figure}
	Indeed, in resolution 000, $k_3(v_2, v_{-2}, w_-\otimes w_- \otimes w_-)=\partial_0^{Lee}(w_-\otimes w_- \otimes w_-)$ is nonzero in homology.  Also, the usual module action of $\exsltwo$ acts nontrivially on the generator $v_+\otimes v_+$ in resolution 111.  Notice that the mirror (the right-handed trefoil) does not have a nontrivial $k_3$ operation in the lowest homological degree. 
\end{example}

\begin{example}
	The above example generalizes to any torus knot or link where the basepoint is in the center.  If every boundary map coming from the lowest homological degree is a merge map, the resolution with each circle labeled $w_-$ will have a nontrivial $k_3$ operation, and the module will act nontrivially on a generator in the highest homological degree.
\end{example}

The examples above illustrate that for an annular link $L$, $\AKh(L)$ can have both nontrivial $k_2$ and $k_3$ operations.  In the case where $L$ is a split link (i.e., at least one component is disjoint), it is further possible for a specific generator to have both nontrivial $k_2$ and $k_3$ operations.  On the other hand, we end this section with a question regarding non-split links.

\begin{question}
	Does there exist a non-split link $L \subset A \times I$ such that $\AKh(L)$ contains a homology class on which the $k_2$ and $k_3$ operations of $\exsltwo$ are nontrivial?  In other words, for a non-split annular link $L$, can there exist $m \in \AKh(L)$ and $x,y_1,y_2 \in \exsltwo$ such that $k_2(x,m)\neq 0$ and $k_3(y_1,y_2,m)\neq 0$?
\end{question}


\section{Appendix}

This appendix contains graphical representations of the formulas presented in the proof of Theorem \ref{thm:transfer}.

	\begin{figure}[H]
		\centering
		\scalebox{1}{\input{pics/transfer/modrel.tex}}

		\caption{A graphical depiction of the $L_\infty$-module relation, as in \cite{D22}.}
		\label{fig:modrel}
	\end{figure}

\vfill

	\begin{figure}[H]
		\centering
		\scalebox{1}{\input{pics/transfer/xStep-1.tex}}
		\caption*{Step 1. We start with the terms on the left-hand side of the $L_\infty$-module relation and replace $k'_q$ with its definition.}
	\end{figure}

\vfill

\newpage

	\begin{figure}[H]
		\centering
		\scalebox{1}{\input{pics/transfer/xStep-2.tex}}
		\caption*{Step 2. By the definition of unshuffle, the $l_p$ term in Step 1 goes to the first element in one of the boxes of size $i_1, \ldots, i_t$ determined by $\tau$.  This observation allows us to combine $\sigma$ and $\tau$ into an unshuffle $\eta$.}

	\end{figure}

	\begin{figure}[H]
		\centering
		\scalebox{.9}{\input{pics/transfer/xStep-3a.tex}} \hfill
		\scalebox{.9}{\input{pics/transfer/xStep-3b.tex}}

		\caption*{Step 3. After unpacking the definition of $A_t$, the left-hand side in the above figure represents the second term in the proof.  The cases where $l=1$, $l=t$, and $p=n-1$ are not pictured here.  We obtain the right-hand side after applying the $L_\infty$-module relation.}
	\end{figure}

	\begin{figure}[H]
		\centering
		\scalebox{.6}{\input{pics/transfer/Step-4/1.tex}}\hfill
		\scalebox{.6}{\input{pics/transfer/Step-4/2.tex}}\hfill
		\scalebox{.6}{\input{pics/transfer/Step-4/3.tex}}\\ \vspace{.2in}
		\scalebox{.6}{\input{pics/transfer/Step-4/4.tex}}\hfill
		\scalebox{.6}{\input{pics/transfer/Step-4/5.tex}}\hfill
		\scalebox{.6}{\input{pics/transfer/Step-4/6.tex}}\\ \vspace{.2in}
		\scalebox{.6}{\input{pics/transfer/Step-4/7.tex}}\hfill
		\scalebox{.6}{\input{pics/transfer/Step-4/8.tex}}\hfill
		\scalebox{.6}{\input{pics/transfer/Step-4/9.tex}}\\ \vspace{.2in}
		\scalebox{.6}{\input{pics/transfer/Step-4/10.tex}}\hfill
		\scalebox{.6}{\input{pics/transfer/Step-4/11.tex}}\hfill
		\scalebox{.6}{\input{pics/transfer/Step-4/12.tex}}
		\caption*{Step 4.  Combine the permuations $\psi$ and $\eta$ into $\kappa$.  There are four terms in step three, and each row in this figure represents one of those terms, where the cases $p=1$, $2\leq p \leq s$, and $p=s+1$ are considered separately (pictured left to right).  For $1<p<s+1$, we may combine the $k_p$ and $k_{s-p+2}$ operations into the $A_t$ operation to obtain the formulas in Step 4.}
	\end{figure}

	\begin{figure}[H]
		\centering
		\scalebox{1}{\input{pics/transfer/xStep-5.tex}}
		\caption*{Step 5.  We can combine some of the terms in Step 4.  In the graphic for Step 4 above, label the terms in the first row by 1, 2, 3, the terms in the second row by 4, 5, 6, the terms in the third row by 7, 8, 9, and the terms in the last row by 10, 11, 12.  Then terms 4 and 7 combine to give the first term above on the left.  The middle term is obtained by combining terms 3 and 6.  The last term is obtained by combining terms 2, 5, 8, and 11.  Moreover, the terms 1 and 10 combine, and so too do 9 and 12, but these two cases are not pictured here.}
	\end{figure}

	\begin{figure}[H]
		\centering
		\scalebox{.8}{\input{pics/transfer/xStep-7a.tex}}\hfill
		\scalebox{.95}{\input{pics/transfer/xStep-7b.tex}}\hfill
		\scalebox{.95}{\input{pics/transfer/xStep-7c.tex}}
		\caption*{Step 7. Focusing now on the right-hand side of the original $L_\infty$-module relation, we substitute for $k'_n$ using its definition.  On the left is the case $2\leq p \leq n-1$, in the center is the case $p=1$, and on the right is the case $p=n-1$.  After using the fact that $i\circ k'_1 = k_1 \circ i$ and $k'_1 \circ q = q\circ k_1$, we obtain the formulas in Step 7.}
	\end{figure}

	\begin{figure}[H]
		\centering
		\scalebox{1}{\input{pics/transfer/Step-4.tex}}

		\caption*{Step 8.  Combine $\sigma$, $\alpha$, and $\beta$ into one unshuffle $\theta$.  Drawn above is the case $2\leq p \leq n-1$.  The cases of $p=1$ and $p=n-1$ are omitted.}
	\end{figure}

\begin{figure}[H]
		\centering
		\scalebox{1}{\input{pics/transfer/Step-8.tex}}
		\caption*{Step 9.  In Step 8, we can replace $i\circ q$ with $\Id_M + k_1 \circ T + T \circ k_1$.  The result is precisely what we had in Step 5.  Again, the cases of $p=1$ and $p=n-1$ are not included in this picture. }
\end{figure}

\bibliographystyle{plain}
\bibliography{annular}

\end{document}